\numberwithin{equation}{section}
\newtheorem{cor}[equation]{Corollary}
\newtheorem{lem}[equation]{Lemma}
\newtheorem{prop}[equation]{Proposition}
\newtheorem{thm}[equation]{Theorem}
\newtheorem{quest}[equation]{Question}
\newtheorem{Example}[equation]{Example}
\newenvironment{ex}{\begin{Example}\rm}{\end{Example}}
\newtheorem{remark}[equation]{Remark}
\newenvironment{rmk}{\begin{remark}\rm}{\end{remark}}
\def\co{\colon\thinspace}
\newcommand{\Iso}{\mbox{Iso}}
\newcommand{\e}{\varepsilon}
\def\g{\gamma}
\def\bh{\mathbf h}
\def\bv{\mathbf v}
\def\bg{\mathbf g}
\def\d{\partial}
\def\r{\rho}
\def\s{\sigma}
\def\l{\lambda}
\def\Z{\mathbb{Z}}
\def\S1{\bf S^1}
\newcommand{\ddr}{{\d_r}}
\newcommand{\chn}{{\mathbf{CH}^n}}
\newcommand{\chm}{{\mathbf{CH}^{n-1}}}
\newcommand{\cV}{{\mathcal V}}
\newcommand{\cH}{{\mathcal H}}
\begin{document}

\abovedisplayskip=6pt plus3pt minus3pt
\belowdisplayskip=6pt plus3pt minus3pt

\title[Complex hyperbolic hyperplane complements]
{\bf Complex hyperbolic hyperplane complements}
\thanks{\it 2000 Mathematics Subject classification.\rm\ Primary
20F65. Secondary 57R19, 22E40.}
\thanks{{\it Keywords:} relatively hyperbolic, hyperplane arrangements,
Mostow rigidity.}\rm


\author{Igor Belegradek}
\address{Igor Belegradek\\School of Mathematics\\ Georgia Institute of
Technology\\ Atlanta, GA 30332-0160}\email{ib@math.gatech.edu}

\date{}
\begin{abstract} 
We study spaces obtained from a complete finite volume 
complex hyperbolic $n$-manifold $M$ by removing a compact 
totally geodesic complex $(n-1)$-submanifold $S$.
The main result is that  
the fundamental group of $M\setminus S$ is relatively hyperbolic, 
relative to fundamental groups of the ends of $M\setminus S$,
and $M\setminus S$ admits a complete finite volume
$A$-regular Riemannian metric of negative sectional curvature.

It follows that for $n>1$ the fundamental group of $M\setminus S$
satisfies Mostow-type Rigidity, has solvable word and conjugacy problems,
has finite asymptotic dimension and rapid decay property,
satisfies Borel and Baum-Connes conjectures,
is co-Hopf and residually hyperbolic,
has no nontrivial subgroups with property (T), and 
has finite outer automorphism group. Furthermore, 
if $M$ is compact, then the fundamental group of $M\setminus S$
is biautomatic and satisfies Strong Tits Alternative.
\end{abstract}
\maketitle
%
\section{Introduction}

Let $M$ be a (connected) complete finite volume
complex hyperbolic $n$-manifold, 
and let $S$ be a (possibly disconnected) 
compact totally geodesic complex submanifold
of dimension $(n-1)$; so
the pair $(M,S)$ is modelled on $(\chn, \chm)$
where $\chn$ denotes the complex hyperbolic symmetric space
of dimension $n$.

This paper is a systematic study of $M\setminus S$,
the manifold obtained by ``drilling'' $S$ in $M$.
Clearly $M\setminus S$ can be identified 
with the interior of a compact manifold $N$ that
is obtained from $M$ by removing a tubular neighborhood of $S$
and chopping off all cusps (in case $M$ is noncompact). 
There are two kinds of components of $\d N$:
compact infranil manifolds appearing as cusp cross-sections of $M$,
and circle bundles over components of $S$. 

It is known that
$N$ is an aspherical manifold with 
$\pi_1$-incompressible boundary~\cite[Lemma B.1]{Bel-rh-warp}.
As noted after Corollary~\ref{thm: intro-gr-theoretic-cor}, the group $\pi_1(N)$ 
shares various rigidity properties with lattices in $\Iso(\chn )$, 
and this alone makes $\pi_1(N)$ 
worth studying.
A key to understanding $\pi_1(N)$ lies in proving that
the group is relatively hyperbolic. The main technical result 
of this paper is

\begin{thm} \label{thm: main thm} If $M$ is a complete finite volume
complex hyperbolic $n$-manifold, 
and $S$ is a compact totally geodesic 
complex $(n-1)$-submanifold, then\newline
\textup{(i)} $M\setminus S$
admits a complete finite volume metric of $\sec\le -1$;\newline
\textup{(ii)} 
the group $\pi_1(N)$ is non-elementary (strongly) relatively hyperbolic, 
where the peripheral subgroups are fundamental groups of the 
components of $\d N$. \newline
\textup{(iii)} $M\setminus S$
admits a complete finite volume $A$-regular metric of $\sec<0$.
\end{thm}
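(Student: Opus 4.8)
The plan is to build the metric on $M \setminus S$ by explicit local modification of the complex hyperbolic metric in a tubular neighborhood of $S$, then deduce the group-theoretic statement (ii) from (i) via existing relative hyperbolicity criteria, and finally upgrade (i) to the $A$-regular statement (iii) by a careful choice of warping functions. Let me sketch each piece.

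For part (i), I would work in a tubular neighborhood $U$ of a component of $S$. Since $S$ is totally geodesic and complex of codimension $2$ (real codimension), the normal bundle carries a natural structure and, in Fermi-type coordinates, the complex hyperbolic metric near $S$ can be written explicitly in terms of the distance $r$ to $S$: schematically $\bg_{\chn} = dr^2 + f(r)^2\, \eta^2 + g(r)^2\, g_S + (\text{lower-order cross terms})$, where $\eta$ is a contact-type $1$-form on the unit normal circle bundle, $g_S$ is (the pullback of) the metric on $S$, and $f, g$ are the standard hyperbolic warping functions (sinh-like, going to $0$ as $r \to 0$). The key point is that $S$ sits at $r=0$ where $f$ and $g$ vanish, producing the singularity we want to ``push to infinity.'' I would replace the radial variable: introduce a new coordinate $t = \phi(r)$ on $(0, r_0)$ with $\phi(r) \to +\infty$ as $r \to 0^+$, and replace the warping functions $f(r), g(r)$ by new functions $\bar f(t), \bar g(t)$ chosen so that (a) the metric matches the old one smoothly at $r = r_0$, (b) the new end at $t = \infty$ has finite volume, and (c) all sectional curvatures stay $\le -1$. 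Condition (c) is where the work lies: one writes the curvature of a warped/twisted product in terms of $\bar f, \bar g$ and their first two derivatives (this is a standard but somewhat involved computation for these ``complex hyperbolic cusp-like'' metrics — the relevant formulas appear in work of Belegradek--Hruska or can be derived from O'Neill/Bishop--O'Neill), and imposes differential inequalities on $\bar f, \bar g$. Choosing $\bar f, \bar g$ to decay like appropriate negative exponentials of $t$ (so the end looks like a complex hyperbolic cusp, which indeed has $\sec \le -1$ and finite volume) should satisfy all constraints; the interpolation on a compact collar $r \in [r_1, r_0]$ is the routine-but-delicate part. The finite-volume condition near the original cusps of $M$ is untouched since we only modify near $S$.

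For part (ii): once (i) gives a complete finite-volume metric with $\sec \le -1$ on $\Int N = M \setminus S$, the manifold $N$ is a compact manifold with boundary whose interior carries such a metric, with each boundary component corresponding to an end that is either a genuine complex hyperbolic cusp or one of the new ``$S$-ends'' (a circle bundle over a component of $S$, with an exponentially-pinched end metric). I would invoke the standard machinery (Gromov's thin-thick decomposition; the criterion that $\pi_1$ of a complete finite-volume pinched negatively curved manifold is relatively hyperbolic relative to the cusp subgroups, due to Farb / Bowditch / Hruska--Kleiner / Belegradek--Hruska) to conclude that $\pi_1(N)$ is relatively hyperbolic relative to the fundamental groups of the ends, which are exactly the $\pi_1$ of the components of $\d N$. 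Non-elementariness follows since $\pi_1(M)$ is non-elementary (as $M$ is complex hyperbolic of dimension $\ge 1$ with $S$ a proper submanifold, the complement has large fundamental group) and the peripheral subgroups have infinite index; strong relative hyperbolicity (Bounded Coset Penetration) is part of the same package in the pinched-curvature setting. One must also check the peripheral subgroups are not relatively hyperbolic themselves and that there are no proper peripheral-conjugacy pathologies, but these are immediate from their virtually-nilpotent-by-(nilpotent-or-surface) structure.

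For part (iii): here I would go back to the construction in (i) and be more careful, choosing the interpolating functions $\bar f, \bar g$ (and hence the metric) to be not merely $\sec < 0$ but \emph{$A$-regular}, meaning $\sec < 0$ with all covariant derivatives $|\nabla^k R|$ bounded by constants $A_k$ depending only on $k$. The strategy is the usual one: make the metric on each end \emph{exactly} equal to a model metric (a complex hyperbolic cusp metric, or a standard ``Heintze-type'' negatively curved end over the circle bundle) outside a compact set, and on the compact collar interpolate using functions whose derivatives of all orders are controlled — one picks $\phi$ and $\bar f, \bar g$ from a fixed family with uniformly bounded derivatives, which is possible precisely because the interpolation happens on a \emph{compact} region and $N$ has finitely many boundary components. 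Since the model ends are homogeneous (or at least have bounded geometry of all orders) and the compact core has bounded geometry, the resulting metric is $A$-regular; the strict negativity $\sec < 0$ (as opposed to $\le -1$) is arranged by a small perturbation or simply by noting the model cusp metrics already have strictly negative curvature. The main obstacle throughout is the explicit curvature computation for these twisted (contact) warped products near a complex totally geodesic divisor and verifying the curvature inequalities survive the interpolation — this is the technical heart, and I would lean on the normal-form for the complex hyperbolic metric near $\chm \subset \chn$ together with the curvature formulas for such metrics already developed in the literature cited above (notably \cite{Bel-rh-warp}).
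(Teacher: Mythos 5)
Your plan for part (i) is essentially the paper's: write the complex hyperbolic metric in cylindrical coordinates about $S$ as $dr^2+\sinh^2(r)\,d\theta^2+\cosh^2(r/2)\,{\bf k}^{n-1}$, push $S$ to infinity, and replace the two warping functions by functions that decay exponentially (with the cusp-like rates $e^{r}$ and $e^{r/2}$) near the new end, interpolating on a collar subject to curvature inequalities obtained from O'Neill-type formulas. Be aware, though, that the interpolation you call ``routine-but-delicate'' is in fact the heart of the matter: the mixed curvature term $\langle R(\d_r,Y_1)Y_2,Y_3\rangle$, governed by the structure constants $c_{23}$ of the nonintegrable horizontal distribution, is not small on the region where the fiber function is bent, and making the diagonal terms absorb it forces the specific two-stage bending and the $\e$-dependent bookkeeping that occupy most of the paper.

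For parts (ii) and (iii) there are genuine gaps. In (ii) you propose to quote the relative hyperbolicity criterion for complete finite-volume \emph{pinched} negatively curved manifolds (Farb/Bowditch style). That hypothesis is not available and cannot be arranged: the metric from (i) has $\sec\le -1$ but its curvature is unbounded below along the $S$-ends (the term $-1/(4h^2)$ blows up as $h\to 0$), and in fact $M\setminus S$ admits \emph{no} negatively pinched metric at all, since the circle-bundle boundary components have fundamental groups that are non-virtually-split central extensions of lattices by $\Z$, hence not virtually nilpotent -- this is exactly part (13) of Corollary~\ref{thm: intro-gr-theoretic-cor}. The paper instead verifies Gromov's definition directly via a criterion (\cite[Theorem 4.2]{Bel-rh-warp}) that needs only the upper bound $\sec\le -1$ together with the warped-product ``cusp neighborhood'' structure of the ends with uniformly Lipschitz holonomy, which is why the metric is built in that special form. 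The same issue undermines your route to (iii): there is no complex hyperbolic cusp or homogeneous Heintze-type model for the $S$-ends, because such models force virtually nilpotent end groups and curvature bounded away from zero, both impossible here. Any complete $A$-regular negatively curved metric must have curvature tending to $0$ at the $S$-ends, so one cannot just graft a bounded-geometry homogeneous model; the paper instead keeps $v=\e e^r$ but changes the second warping function to $\tau_\e+e^{r/2}$ near $-\infty$, re-runs the negativity estimates (now without a uniform negative upper bound), and proves $A$-regularity by an explicit induction showing all components of $\nabla^k R$ in the natural frame are bounded functions of $r$. Your general idea of ``make the end metric explicit and control all derivatives'' is the right one, but the specific models you name would fail, and the degeneration of curvature to zero is an unavoidable feature you would need to build in.
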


The proof of Part (i) involves a delicate
warped product computation which occupies most of this paper,
and is sketched in Section~\ref{sec: sketch of curv comp}.
Using a special form of the metric constructed in (i), we prove,
following~\cite[Section 4]{Bel-rh-warp}, that $\pi_1(N)$ 
satisfies Gromov's definition of relative hyperbolicity 
elaborated in~\cite{Bow-rel}.  
It seems that (i) by itself yields no information 
about $\pi_1(N)$ beyond the obvious fact that
$\pi_1(N)$ surjects onto $\pi_1(M)$.

By contrast, (iii) has substantial topological implications, 
namely, Farrell-Jones \cite[Addendum 0.5]{FJ-A-reg} proved that 
the fundamental group of any complete manifold 
with $A$-regular metric of nonpositive curvature satisfies Borel's conjecture,
while Lafforgue~\cite[Corollary 0.0.4]{Laf-BC} proved Baum-Connes conjecture 
for the fundamental groups of complete $A$-regular nonpositively 
curved manifolds that satisfy Rapid Decay property
(cf. Corollary~\ref{thm: intro-gr-theoretic-cor}(10) below). 

Recall that a Riemannian metric is called {\it $A$-regular} 
if there exists a sequence of positive numbers $A=\{A_k\}$
such that for each $k\ge 0$,  
the $k$-th covariant derivative of the curvature tensor satisfies
$||\nabla^k R||_{C^0}<A_k$; for $k=0$ this
yields a two-sided bound on sectional curvature. 
Any metric on a compact manifold is $A$-regular, 
and similarly, this is true e.g. for an open manifold 
which is isometric outside a compact subset to the product
of a closed manifold and a ray. A locally symmetric metric is 
$A$-regular. 
A complete Riemannian 
metric with $a\le \sec\le b$ admits a $C^1$-nearby complete $A$-regular metric 
with almost the same curvature bounds (see~\cite{Kap-ricfl}), in particular,
if $a, b$ are negative, then the $A$-regular metric is negatively pinched;
however, if $b=0$, then the $A$-regular metric of~\cite{Kap-ricfl} need not 
be nonpositively curved. 
In general, proving that a complete nonpositively curved 
manifold admits a complete
$A$-regular nonpositively curved metric is quite difficult,
and in Theorem~\ref{thm: main thm}(i) this is done by  
modifying the metric constructed in (i), and proving $A$-regularity by
a brute force computation. 
Part (13) of Corollary~\ref{thm: intro-gr-theoretic-cor} 
below implies that $M\setminus S$ admits no complete
$A$-regular metric of $\sec\le -1$.

%
The following Mostow-type rigidity result is implied by part (ii) 
of Theorem~\ref{thm: main thm} combined with the classical 
Mostow-Prasad Rigidity.

\begin{cor}\label{thm: intro-mostow} 
For $n>1$ and $i=1,2$, suppose that
$M_i$ is a complete finite volume complex hyperbolic $n$-manifold,
and $S_i$ is a compact totally geodesic complex $(n-1)$-submanifold. Then
any homotopy equivalence $f\co M_1\setminus S_1\to M_2\setminus S_2$
give rise to an isometry $\iota_f\co M_1\to M_2$
taking $S_1$ to $S_2$ such that the restriction
$\iota_f\co M_1\setminus S_1\to M_2\setminus S_2$
is homotopic to $f$. Moreover, $\iota_f$ is uniquely
determined by the homotopy class of $f$.
\end{cor}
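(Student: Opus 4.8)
The plan is to reduce the statement to classical Mostow–Prasad rigidity for the closed (or finite-volume) complex hyperbolic manifolds $M_i$, using Theorem~\ref{thm: main thm}(ii) to control the peripheral structure. First I would invoke part (ii) of the main theorem: for $i=1,2$ the group $\pi_1(N_i)$ is non-elementary strongly relatively hyperbolic with peripheral subgroups the fundamental groups of the components of $\partial N_i$, where $N_i$ is the compact manifold with interior $M_i\setminus S_i$. A homotopy equivalence $f$ induces an isomorphism $f_\ast\co \pi_1(N_1)\to\pi_1(N_2)$. The key structural input is that the peripheral subgroups are canonically recognizable inside a relatively hyperbolic group: each maximal parabolic subgroup is commensurable with a unique peripheral subgroup, and the peripheral subgroups here are precisely the non-hyperbolic (amenable, in fact virtually nilpotent) ones among the ``obvious'' subgroups. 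So $f_\ast$ permutes the conjugacy classes of peripheral subgroups, and in particular maps each $\pi_1$ of a boundary component of $N_1$ isomorphically onto a conjugate of some $\pi_1$ of a boundary component of $N_2$.

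Next I would separate the two kinds of boundary components identified in the introduction: the cusp cross-sections (compact infranilmanifolds, with virtually nilpotent but nonabelian-in-the-Heisenberg-sense $\pi_1$ of ``Heisenberg type'') and the circle bundles over components of $S$ (whose $\pi_1$ is a central extension of a complex hyperbolic $(n-1)$-manifold group by $\Z$). These two families are distinguished group-theoretically — one is virtually nilpotent, the other is not — so $f_\ast$ must send cusp peripherals to cusp peripherals and circle-bundle peripherals to circle-bundle peripherals. Collapsing the circle fibers (equivalently, killing the center $\Z$ in each circle-bundle peripheral subgroup, which is a characteristic subgroup detectable as the unique maximal normal subgroup isomorphic to $\Z$) realizes $\pi_1(M_i)$ as the quotient of $\pi_1(N_i)$ by the normal subgroup generated by these central $\Z$'s, together with filling back in the cusps. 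The claim is that $f_\ast$ descends to an isomorphism $\pi_1(M_1)\to\pi_1(M_2)$: the normal subgroup being quotiented is $f_\ast$-invariant because it is generated by the centers of the circle-bundle peripherals, and the cusp subgroups match up as well. This is the step I expect to require the most care — making precise that the Dehn-filling-type quotient that recovers $\pi_1(M_i)$ from $\pi_1(N_i)$ is canonical and preserved by any isomorphism, and handling the noncompact case where one also fills the cusps. One can alternatively phrase this via the induced homotopy equivalence $M_1\setminus S_1\to M_2\setminus S_2$ extending over the ``filled'' manifolds after checking it is compatible with the peripheral structure.

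Once $\pi_1(M_1)\cong\pi_1(M_2)$ is established, Mostow–Prasad rigidity (valid for $n>1$, i.e.\ real dimension $\ge 4$) produces a unique isometry $\iota_f\co M_1\to M_2$ inducing (up to conjugacy) this isomorphism. It remains to check $\iota_f(S_1)=S_2$: the submanifold $S_i\subset M_i$ is characterized as the totally geodesic locus ``at infinity'' of the drilled manifold — concretely, the conjugacy classes of circle-bundle peripheral subgroups of $\pi_1(N_i)$, after killing centers, are exactly the conjugacy classes of $\pi_1$ of the components of $S_i$, viewed as subgroups of $\pi_1(M_i)$, and these correspond under $\iota_f$; since a totally geodesic complex hypersurface is determined by its fundamental group inside a complex hyperbolic lattice (again by Mostow-type uniqueness of totally geodesic representatives of homotopy classes), $\iota_f$ must carry $S_1$ to $S_2$. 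Hence $\iota_f$ restricts to a homeomorphism $M_1\setminus S_1\to M_2\setminus S_2$, which induces $f_\ast$ on $\pi_1$, and since both spaces are aspherical it is homotopic to $f$. Uniqueness of $\iota_f$ is inherited from the uniqueness clause in Mostow–Prasad rigidity: two isometries inducing the same map on $\pi_1(M_1)$ coincide. The main obstacle, as noted, is the canonicity of the correspondence between the boundary structure of $N_i$ and the pair $(M_i,S_i)$; everything else is a packaging of standard rigidity and the relative hyperbolicity supplied by Theorem~\ref{thm: main thm}(ii).
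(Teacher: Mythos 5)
Your overall route is the one the paper intends (the paper only sketches this corollary: it is deduced from Theorem~\ref{thm: main thm}(ii) together with Mostow--Prasad rigidity, with the details carried out exactly as in the real hyperbolic case of \cite{Bel-rh-warp}): an abstract isomorphism of the relatively hyperbolic groups respects the peripheral structure, the cusp peripherals and circle-bundle peripherals are distinguished group-theoretically, the circle-fiber classes are the centers of the circle-bundle peripherals, killing them recovers $\pi_1(M_i)$, and Mostow--Prasad plus uniqueness of the compact totally geodesic hypersurface with prescribed fundamental group gives the isometry carrying $S_1$ to $S_2$. Two corrections to your justifications: the circle-bundle peripherals are neither amenable nor virtually nilpotent (they are $\Z$-central extensions of cocompact lattices in $PU(n-1,1)$); what makes the peripheral structure invariant under an abstract isomorphism is that every peripheral subgroup is itself not properly relatively hyperbolic --- the cusp groups because they are infinite virtually nilpotent and not virtually cyclic, the circle-bundle groups because they have infinite center --- and this is the argument used in \cite{Bel-rh-warp}. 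Also no cusp ``filling'' occurs: $\pi_1(M_i)$ is the quotient of $\pi_1(M_i\setminus S_i)$ by the normal closure of the fiber classes alone, and the cusp peripherals inject.

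The genuine gap is in your last step, where you assert that the restriction of $\iota_f$ to $M_1\setminus S_1$ ``induces $f_*$ on $\pi_1$'' and conclude by asphericity. Your construction only shows that $(\iota_f)_*$ and the descent of $f_*$ agree as outer isomorphisms $\pi_1(M_1)\to\pi_1(M_2)$; since the kernel of $\pi_1(M_i\setminus S_i)\to\pi_1(M_i)$ is enormous, two homotopy equivalences of the complements that agree after killing the fiber classes need not induce conjugate isomorphisms of $\pi_1(M_1\setminus S_1)\to\pi_1(M_2\setminus S_2)$, so the claimed homotopy $\iota_f|_{M_1\setminus S_1}\simeq f$ does not follow from what you have established (indeed, deducing it from the agreement on $\pi_1(M_i)$ would essentially presuppose the finiteness of the relevant outer automorphisms, which is Corollary~\ref{cor: intro-out}, itself a consequence of this corollary). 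The way to close this is the alternative you mention only in passing: work with $f$ itself rather than with the quotient isomorphism. Homotope $f$ to a homotopy equivalence of pairs $(N_1,\d N_1)\to(N_2,\d N_2)$ (possible because $f_*$ matches the peripheral subgroups and the boundary components are aspherical and $\pi_1$-incompressible), then extend over the disk bundles and cusp ends --- using that $f_*$ carries the center of each circle-bundle peripheral onto the corresponding center, i.e.\ fiber to $\pm$\,fiber --- to obtain $\hat f\co(M_1,S_1)\to(M_2,S_2)$ whose restriction to the complement is homotopic to $f$; only then straighten $\hat f$ to the isometry $\iota_f$, checking via the uniqueness of the totally geodesic representative and an equivariant geodesic homotopy that the straightening respects the pair, so that the homotopy between $\iota_f$ and $\hat f$ can be pushed off $S_2$ and restricted to the complements.
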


\begin{cor}\label{cor: intro-out}
For $n>1$, if
$M$ is a complete finite volume complex hyperbolic $n$-manifold,
and $S$ is a compact totally geodesic complex $(n-1)$-submanifold, 
then the correspondence $f\to \iota_f$ induces an isomorphism
of the outer automorphism group of $\pi_1(M\setminus S)$
onto the group of isometries of $M$ that map $S$ to itself. 
In particular, the outer automorphism group of $\pi_1(M\setminus S)$
is finite.
\end{cor}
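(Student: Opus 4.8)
The plan is to deduce this directly from Corollary \ref{thm: intro-mostow}. That corollary already tells us that every homotopy equivalence $f\co M\setminus S\to M\setminus S$ produces an isometry $\iota_f\co M\to M$ with $\iota_f(S)=S$, uniquely determined by the homotopy class of $f$, and that $\iota_f|_{M\setminus S}$ is homotopic to $f$. So the map $f\mapsto\iota_f$ descends to a map from the set of homotopy classes of self-homotopy-equivalences of $M\setminus S$ to $\Iso(M,S)$, the group of isometries of $M$ preserving $S$ setwise. Since $M\setminus S$ is a $K(\pi,1)$ (it is the interior of the aspherical manifold $N$), homotopy classes of self-homotopy-equivalences correspond bijectively to $\Out(\pi_1(M\setminus S))$; thus we get a well-defined map $\Phi\co\Out(\pi_1(M\setminus S))\to\Iso(M,S)$.

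First I would check that $\Phi$ is a homomorphism. Given two homotopy equivalences $f,g$, the composite $\iota_f\circ\iota_g$ is an isometry of $M$ preserving $S$, and its restriction to $M\setminus S$ is homotopic to $f\circ g$ because $\iota_f|_{M\setminus S}\simeq f$ and $\iota_g|_{M\setminus S}\simeq g$ and composition respects homotopy. By the uniqueness clause in Corollary \ref{thm: intro-mostow}, $\iota_{f\circ g}=\iota_f\circ\iota_g$, so $\Phi$ is multiplicative; it sends the identity to the identity, hence is a group homomorphism.

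Next, injectivity: if $\iota_f$ is the identity isometry of $M$, then $f\simeq\iota_f|_{M\setminus S}=\mathrm{id}$, so $f$ represents the trivial element of $\Out(\pi_1(M\setminus S))$; hence $\ker\Phi$ is trivial. For surjectivity, take any $\phi\in\Iso(M,S)$. Then $\phi$ restricts to a self-diffeomorphism (in particular self-homotopy-equivalence) $\phi|_{M\setminus S}$ of $M\setminus S$, so $\Phi$ applied to its class is $\iota_{\phi|_{M\setminus S}}$; but $\phi$ itself is an isometry of $M$ taking $S$ to $S$ whose restriction is homotopic to $\phi|_{M\setminus S}$, so by uniqueness $\iota_{\phi|_{M\setminus S}}=\phi$. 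Thus $\Phi$ is an isomorphism. Finally, $\Iso(M,S)$ is finite: $\Iso(M)$ is compact (as $M$ has finite volume, $\Iso(M)$ is a compact Lie group by the classical theory, and it has no positive-dimensional component since a nontrivial continuous family of isometries would give a nontrivial field of directions contradicting negative curvature / finiteness of volume — more directly, $\Iso(M)$ is finite because $M$ is a finite-volume locally symmetric space of noncompact type with no Euclidean or compact factors), and $\Iso(M,S)$ is a closed subgroup, hence finite. The only mild obstacle is packaging the finiteness of $\Iso(M)$ cleanly, but this is entirely classical (Mostow–Prasad rigidity already presupposes it), so the real content is wholly carried by Corollary \ref{thm: intro-mostow}.
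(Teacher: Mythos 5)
Your proposal is correct and follows essentially the same route the paper intends: Corollary~\ref{cor: intro-out} is deduced from Corollary~\ref{thm: intro-mostow} together with asphericity of $M\setminus S$ (so homotopy classes of self-equivalences realize $\mathrm{Out}(\pi_1)$) and finiteness of the isometry group of a finite-volume locally symmetric space of noncompact type; the paper simply defers to the identical argument in the real hyperbolic case of \cite{Bel-rh-warp}.
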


Other corollaries of Theorem~\ref{thm: main thm} are summarized below.

\begin{cor}\label{thm: intro-gr-theoretic-cor} 
If $M$ is a complete finite volume complex hyperbolic $n$-manifold,
and $S$ is a compact totally geodesic complex $(n-1)$-submanifold
with $n>1$, then \newline
$\mathrm{(1)}$ the relatively hyperbolic boundary of $\pi_1(N)$ 
is the $(n-1)$-sphere.\newline
$\mathrm{(2)}$ 
$\pi_1(N)$ does not split as an amalgamated product
or an HNN-extension over subgroups of  
peripheral subgroups of $\pi_1(N)$, or over $\Z$.\newline
$\mathrm{(3)}$  $\pi_1(N)$ is co-Hopf.\newline
$\mathrm{(4)}$ for any finite subset $F\subset\pi_1(N)$  
there is a homomorphism
of $\pi_1(N)$ onto a non-elementary hyperbolic group  that is injective 
on $F$.\newline
$\mathrm{(5)}$ $\pi_1(N)$ satisfies Strong Tits Alternative
iff $M$ is compact.\newline
$\mathrm{(6)}$ $\pi_1(N)$ is biautomatic iff $M$ is compact.\newline
$\mathrm{(7)}$  No nontrivial subgroup of $\pi_1(N)$ has Kazhdan property 
\textup{(T)}. \newline
$\mathrm{(8)}$ $\pi_1(N)$ is not a $CAT(0)$ group.
\newline
$\mathrm{(9)}$ $\pi_1(N)$ has finite asymptotic dimension.\newline
$\mathrm{(10)}$ $\pi_1(N)$ has Rapid Decay property.\newline
$\mathrm{(11)}$ $\pi_1(N)$ satisfies the Baum-Connes conjecture.\newline
$\mathrm{(12)}$ $\pi_1(N)$ satisfies the Borel Conjecture, and in particular,
if $n>2$, then any homotopy equivalence 
of compact manifolds $L\to N$ that restricts to a homeomorphism
of the boundaries $\d L\to \d N$ is homotopic to a homeomorphism 
rel boundary.\newline
$\mathrm{(13)}$ $\pi_1(N)$ is not isomorphic to the fundamental group of a 
complete negatively pinched Riemannian manifold.\newline
$\mathrm{(14)}$ if $\pi_1(N)$ is isomorphic to a lattice $\Lambda$
in a real Lie group $G$, then the identity component $G_0$ of $G$ is compact,
$\Lambda\cap G_0$ is trivial, 
and $\Lambda$ projects isomorphically onto 
a finite index subgroup of $G/G_0$. \newline
$\mathrm{(15)}$ $\pi_1(N)$ has solvable word and conjugacy problems.
\end{cor}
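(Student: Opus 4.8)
The plan is to derive each of the fifteen assertions from Theorem~\ref{thm: main thm}; the only genuinely new work is to record the two kinds of peripheral subgroup of $\pi_1(N)$ and their properties, after which most parts are standard permanence results for relatively hyperbolic groups, a few use the $A$-regular geometry of part (iii) directly, and a few are obstructions visible on a single subgroup. The peripheral subgroups coming from cusp cross-sections of $M$ are torsion-free finitely generated virtually nilpotent groups, and are non-virtually-abelian whenever $M$ is noncompact (complex hyperbolic cusps being of Heisenberg type); those coming from a circle bundle over a component $S_i$ of $S$ are, the unit normal bundle of a complex submanifold being a principal $U(1)$-bundle, central extensions $1\to\Z\to\tilde P_i\to\pi_1(S_i)\to 1$ of a cocompact lattice $\pi_1(S_i)<\Iso(\chm)$ — hence a nonelementary hyperbolic group — by $\Z$. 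Each kind is torsion-free, residually finite, of finite asymptotic dimension, has the Rapid Decay property, has solvable word and conjugacy problems, and — being amenable, respectively (Neumann--Reeves) biautomatic and a central extension of the a-T-menable group $\Iso(\chm)$ — has no infinite subgroup with property (T) and satisfies the Baum--Connes and Borel conjectures; moreover $\pi_1(N)$ itself is torsion-free because $N$ is aspherical.

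\emph{Parts that are permanence results.} For (1) one identifies the Bowditch boundary of $\pi_1(N)$ with the visual boundary $\partial_\infty\widetilde{M\setminus S}$ of the Hadamard manifold supplied by part (i), a sphere, after checking from the near-end form of that metric that every peripheral subgroup is parabolic with a single fixed point at infinity, so that the Bowditch collapse changes nothing. Granting (1), part (2) follows from Bowditch's work on cut points, local cut points and peripheral splittings, a sphere of dimension $\ge 3$ having neither; part (3) (co-Hopficity) follows from the absence of a nontrivial peripheral splitting by the method of~\cite[Section~4]{Bel-rh-warp}, together with the fact that a proper finite-index self-embedding is impossible because $\chi(\pi_1(N))=\chi(N)=\chi(M)-\chi(S)\ne 0$, the Euler characteristics of $M$ and $S$ having opposite signs by Gauss--Bonnet; part (4) is the peripheral Dehn filling theorem (Osin, Groves--Manning) applied to finite-index normal subgroups of the residually finite peripheral subgroups; part (7) holds because no peripheral subgroup contains an infinite subgroup with (T), while a (T) subgroup of $\pi_1(N)$ that is not conjugate into a peripheral is excluded by the negatively curved geometry of part (i) together with the surjection of $\pi_1(N)$ onto the non-(T) lattice $\pi_1(M)$; parts (9), (10), (15) are the permanence theorems of Osin (asymptotic dimension), Drutu--Sapir (Rapid Decay) and Bumagin (conjugacy, hence word, problem), combined with the peripheral facts above.

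\emph{Parts using part (iii), and obstruction parts.} The $A$-regular metric of part (iii) has $\sec<0$, hence is nonpositively curved, so Farrell--Jones~\cite[Addendum 0.5]{FJ-A-reg} gives the Borel conjecture of (12) — and its bounded surgery consequence in dimension $2n\ge 6$, using that the components of $\partial N$ also satisfy the Farrell--Jones conjecture — while Lafforgue~\cite[Corollary 0.0.4]{Laf-BC}, with part (10), gives Baum--Connes in (11). For (13): $\pi_1(N)$ contains $F_2\times\Z$ (pull a nonabelian free subgroup of $\pi_1(S_i)$ back through $\tilde P_i$, the central extension splitting over a free group), and $F_2\times\Z$ admits no discrete faithful isometric action on a complete negatively pinched Hadamard manifold, since in that setting a discrete group of elliptic isometries is finite, a discrete parabolic subgroup is virtually nilpotent, and the centralizer of a loxodromic isometry is virtually cyclic. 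For (8): a peripheral subgroup is never a CAT(0) group — a non-virtually-abelian nilpotent group by the solvable subgroup theorem, and $\tilde P_i$ by the flat torus/splitting theorem, its Euler class $c_1(\nu_{S_i})$ being non-torsion — whereas a relatively hyperbolic group acting geometrically on a CAT(0) space has CAT(0) peripheral subgroups. Parts (5) and (6) are the ``iff''s: when $M$ is compact the only peripherals are the $\tilde P_i$, which as central extensions of hyperbolic groups are biautomatic and satisfy the Strong Tits Alternative, so $\pi_1(N)$ inherits both from the combination theorems for relatively hyperbolic groups; when $M$ is noncompact a cusp peripheral is a non-virtually-abelian finitely generated nilpotent subgroup of $\pi_1(N)$, which already violates the Strong Tits Alternative and, being virtually polycyclic, rules out biautomaticity.

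\emph{Part (14), and the main obstacle.} If $\pi_1(N)\cong\Lambda\le G$ is a lattice, then the identity component $G_0$ has no noncompact semisimple part (a higher-rank lattice is not relatively hyperbolic, by Behrstock--Drutu--Mosher, while a lattice in a rank-one group would, up to finite index, contradict (13)) and no noncompact solvable radical (else $\Lambda\cap G_0$ would be an infinite amenable normal subgroup of $\pi_1(N)$, which a nonelementary relatively hyperbolic group has none of); hence $G_0$ is compact, $\Lambda\cap G_0$ is finite and so trivial by torsion-freeness, and $\Lambda$ maps isomorphically onto a lattice — that is, a finite-index subgroup — of the discrete group $G/G_0$. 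The step I expect to be the main obstacle is part (1): showing that each peripheral subgroup occupies a single parabolic point of $\partial_\infty\widetilde{M\setminus S}$, so that the Bowditch boundary is genuinely a sphere, requires controlling the metric of part (i) near the circle-bundle ends — precisely the delicate warped-product analysis that occupies the rest of the paper — and, after that, the ``if'' halves of (5) and (6), which need the combination theorems for relatively hyperbolic groups applied to the specific non-hyperbolic peripherals $\tilde P_i$; everything else is quotation.
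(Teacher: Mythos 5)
Your overall architecture (reduce everything to Theorem~\ref{thm: main thm}, then quote permanence results for relatively hyperbolic groups, treating the two kinds of peripheral subgroups separately) is the same as the paper's, which proves most items verbatim as in \cite{Bel-rh-warp} and only argues (4), (5), (6), (8), (15) separately. But there is one genuine gap and one unjustified step. The gap is in (4) and in your preliminary claim that ``each kind'' of peripheral subgroup is residually finite: the circle-bundle peripherals $\tilde P_i$ are lattices in the \emph{nonlinear} universal cover of $SU(n-1,1)$, and whether such groups are residually finite is precisely the open issue the paper emphasizes in the discussion around Question~\ref{quest: rf} (``it is unclear whether $\pi_1(B)$ is always residually finite''). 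So Dehn filling along finite-index normal subgroups of the peripherals, as you propose, rests on an open problem. The paper's proof of (4) avoids this entirely: for a peripheral $H$ with central $\Z=\langle z\rangle$ and hyperbolic quotient, it fills along the subgroups $\langle z^n\rangle$; the quotients $H/\langle z^n\rangle$ are finite-(central)-by-hyperbolic, hence hyperbolic, and a pigeonhole argument shows that any given finite subset of $H$ injects into $H/\langle z^n\rangle$ for suitable $n$, so $H$ is fully residually hyperbolic and \cite{Osi-periph, GroMan} applies.

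In (8), the bridge ``a relatively hyperbolic group acting geometrically on a CAT(0) space has CAT(0) peripheral subgroups'' is not a quotable theorem and you give no argument for it (strong relative quasiconvexity of peripherals does not yield convex cocompactness in a CAT(0) model). The paper instead argues through centralizers: if $z$ generates the central $\Z$ of a circle-bundle peripheral $H$, then almost malnormality of the peripheral structure gives $C_{\pi_1(N)}(z)=H$; if $\pi_1(N)$ acted geometrically (or just by semisimple isometries) on a CAT(0) space, the centralizer of $z$ would virtually split off $\langle z\rangle$ by \cite[Theorem 1.1 (iv), page 439]{BH}, forcing the real first Chern class of the normal circle bundle to vanish in a finite cover, contradicting Lemma~\ref{lem: kahler form} --- which is also the computation you need (but omit) to back your assertion that the Euler class of $\tilde P_i$ is non-torsion. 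Your version of (8) can be repaired by substituting this centralizer argument for the unproved claim. A smaller soft spot of the same kind: in (7), the surjection $\pi_1(N)\to\pi_1(M)$ does not by itself exclude property (T) subgroups, since an infinite (T) subgroup could lie (up to finite index) in the kernel, so that clause is not a proof as stated.
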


It is instructive to see that
with exception of (8), (13), (14) 
of Corollary~\ref{thm: intro-gr-theoretic-cor},
all conclusions of Theorem~\ref{thm: main thm} and 
Corollaries~\ref{thm: intro-mostow}--\ref{thm: intro-gr-theoretic-cor},
are valid when $S=\emptyset$: indeed,
Corollaries~\ref{thm: intro-mostow}--\ref{thm: intro-gr-theoretic-cor}
still follow from Theorem~\ref{thm: main thm}, which becomes obvious for
$S=\emptyset$. Thus $M\setminus S$ with nonempty $S$
shares many properties of complete finite volume complex hyperbolic 
manifolds.

Corollaries~\ref{thm: intro-mostow}--\ref{thm: intro-gr-theoretic-cor}
follow by combining Theorem~\ref{thm: main thm} with
various works available in the literature, and 
with a few exceptions, their proofs are identical to the proofs of
the corresponding results in the real hyperbolic case studied
in~\cite{Bel-rh-warp}; the cases where the proofs are different
from those in~\cite{Bel-rh-warp} are dealt with in 
Section~\ref{sec: appl}. 

The only previous results on the topology and geometry of $M\setminus S$ 
are as follows. 
Toledo showed that the group $\pi_1(M\setminus S)$ 
is not K\"ahler 
when $n=2$~\cite[page 112]{ABCKT}, and is sometimes K\"ahler 
when $n>2$~\cite[pages 107-110]{Tol-nrf} (neither of these references 
treats the case of $M\setminus S$ directly, but the proofs still work
with minor modifications).
Allcock-Carlson-Toledo~\cite{ACT} studied a more general 
(and much more complicated) 
case when hyperplanes in $S$ are allowed to intersect orthogonally; 
they write down an explicit infinite presentation for
the kernel of the homomorphism
$\pi_1(M\setminus S)\to\pi_1(M)$ induced by the inclusion, and
prove that $\pi_1(M\setminus S)$
is not isomorphic to a lattice in a virtually connected real Lie group. 

I refer to~\cite{Bel-rh-warp} for some open problems about $M\setminus S$,
and focus on the following tantalizing question due to Toledo.

\begin{quest}\label{quest: rf}
Is $\pi_1(M\setminus S)$ residually finite?  
\end{quest}

Toledo~\cite{Tol-nrf} showed that the answer is generally {\it no} 
in the similar case when $(M,S)$ is modelled on $({\bf X}_n, {\bf X}_{n-1})$
where ${\bf X}_n$ is the symmetric space for $SO(n,2)$. 
It is instructive to recall his argument. Fix an arbitrary
component $B$ of the boundary of a small tubular neighborhood 
of $S$ in $M$. 
Toledo proves that the inclusion $B\to M\setminus S$ is 
$\pi_1$-injective and $\pi_1(B)$ is a lattice in the universal cover 
of $Spin(n-1, 2)$. The proof in~\cite{Tol-nrf}
is written under the simplifying assumptions that $n\ge 4$ 
and $n$ is even, in which case 
Raghunathan's work~\cite{Rag} implies that $\pi_1(B)$ 
is not residually finite.  
Since residual finiteness of a group is inherited by subgroups,
it followed that $\pi_1(M\setminus S)$ is not residually finite.
Toledo comments that the above proof also works when 
$(M,S)$ is modelled on $(\chn, \chm)$ except that~\cite{Rag}
is not available. Raghunathan's work is intimately related
with solution of the congruence subgroup problem for $SO(n-1,2)$, which is
wide open for lattices in $SU(n-1,1)$.  

If $(M,S)$ is modelled on $(\chn, \chm)$, then there is a strong 
motivation for trying to show that $\pi_1(M\setminus S)$ need not
be residually finite. Indeed, by Part (5) of 
Corollary~\ref{thm: intro-gr-theoretic-cor}, which is based on 
Dehn Surgery theorem in relatively hyperbolic groups
due to~\cite{Osi-periph, GroMan}, 
$\pi_1(M\setminus S)$ is {\it residually hyperbolic},
and therefore, {\it if} $\pi_1(M\setminus S)$ is
{\it not} residually finite for some $(M,S)$, then there exists
a hyperbolic group that is not residually finite. Of course, there are 
many residually hyperbolic groups that do not
look residually finite, and the main reason 
I single out $\pi_1(M\setminus S)$ as a candidate for disproving residual 
finiteness of hyperbolic groups
is that  $\pi_1(M\setminus S)$ is not far from being a lattice
so perhaps its finite index subgroups could be sometimes
understood via arithmetic means. 

Another promising candidate
is $\pi_1(B)$, where as before $B$ is the boundary of a tubular 
neighborhood of a component of $S$. Indeed, $\pi_1(B)$
is a lattice in the universal cover of $SU(n-1,1)$, 
which is a nonlinear semisimple Lie group 
(see e.g.~\cite[page 115]{ABCKT}). 
As noted in Lemma~\ref{lem: kahler form},
$B$ is a circle bundle over a component of $S$ whose
first Chern class is the $-\frac{1}{4\pi}$-multiple 
of the K\"ahler form of $S$, and therefore, $\pi_1(B)$ is an extension  
with infinite cyclic kernel and hyperbolic quotient, which
does not virtually split. 
By a straightforward argument,
any extension with infinite cyclic kernel and hyperbolic quotient
is residually hyperbolic, yet it is unclear whether
$\pi_1(B)$ is always residually finite. 

\section{Outline of the curvature computation}
\label{sec: sketch of curv comp}

In the context of this paper, a multiply-warped product is a metric 
of the form $dr^2+g_r$ where $r$ varies in an open interval
and $g_r$ is a family of Riemannian metrics on a smooth manifold
$F$ constructed by fixing a Riemannian metric $\mathbf f$ on $F$,
considering an orthogonal splitting of the tangent bundle
$TF$ into (possibly nonintegrable) subbundles $H_i$, and scaling 
the metric on each $H_i$ by a warping function $h_i=h_i(r)$.
The key issues in constructing multiply-warped metrics with 
prescribed curvature bounds are \newline
(1) to come up with curvature formulas   
such that the bounds on curvature translate into {\it simple} differential
inequalities on warping functions $h_i$,\newline
(2) to construct warping functions $h_i$ that satisfy the inequalities. 

Part (1) depends on the specifics of the geometry of $(F, \bf{f})$ 
and on interaction between $H_i$'s, e.g. the curvature formulas
for $(F, g_r)$ typically involve brackets of vector fields 
from different $H_i$'s, and if each $H_i$ is integrable, the 
formulas simplify considerably.
Part (2) is driven by the shape of the differential inequalities
obtained in Part (1). The methods used in Part (2)  
are usually those of single variable calculus and elementary ODE,
yet making them work is a specialized craft involving a number
of tricks, and the intuition behind the tricks is intimately 
related to the geometry of the desired curvature bound,
be that negative, almost nonnegative, or Ricci positive curvature.

In Section~\ref{sec: cyl-coord} we write the complex
hyperbolic metric on the ends of $M\setminus S$
in cylindrical coordinates about $S$
as
\[
dr^2+\sinh^2(r) d\theta^2 + \cosh^2\left(\frac{r}{2}\right) {\bf k}^{n-1}
\]
where $r$ is the distance to $S$, and $\theta$ is the parameter on
the unit circle about $S$, and ${\bf k}^{n-1}$ is the complex
hyperbolic metric $S$. The ``+'' refers to the orthogonal splitting of 
the tangent bundle to $M\setminus S$ into the sum of
integrable subbundles spanned by $\ddr$ and $\frac{\d}{\d\theta}$,
and their orthogonal complement $\cH$ , which is nonintegrable.
We then modify the metric on the ends of $M\setminus S$ to be
 \[
\l_{v,h}:=dr^2+v^2 d\theta^2 + h^2 {\bf k}^{n-1}
\]
and compute its curvature tensor in terms of $v$, $h$,
where $v, h$ are positive functions of $r$,
which varies from $-\infty$ to the normal injectivity radius of $S$.
Formulas of Appendix~\ref{sec: components-of-curv-tensor}
reduce the problem to computing curvatures of the 
$r$-tubes about $S$. In Section~\ref{sec: basis} we set up a convenient
frame in which the curvature tensor components are to be computed.
The ``structure constants'' of brackets in the frame
are computed in Section~\ref{sec: curv of chn} by specializing to the
complex hyperbolic space where all curvatures are known. 
Each $r$-tube about $S$
comes with the Riemannian submersion metric $v^2 d\theta^2 + h^2 {\bf k}^{n-1}$ 
which has totally geodesic circle fibers,  so we use 
O'Neill's formulas to compute the curvature tensor of the tube. 
This is done in Sections~\ref{sec: seccurv of coord planes}--\ref{sec: mixed}
where we also arrange for several computational simplifications, notably,
we shall never need to know $\langle R(X_i, X_j), X_k, X_1\rangle$
where $X_1$ is vertical and $X_i, X_j, X_k$ are linearly independent 
horizontal vector fields.
Putting all this together in Section~\ref{sec: sectional curv},
we obtain a reasonably simple formulas for the sectional curvature.

In Section~\ref{sec: curv estimate} we choose $v,h$ so that $M\setminus S$
becomes complete, finite volume, and of sectional curvature bounded
above by a negative constant, and furthermore the metric is complex
hyperbolic away from a small tubular neighborhood of $S$. 
This is the heart of the proof, and to help 
digesting it we outline what we shall do, and why we do it.

First of all, we assume that $v, h$ are positive 
so that the metric is nondegenerate.
After glancing over the curvature formulas 
(\ref{form: k(y_i,y_1)})--(\ref{form: r(dr,y_1, y_2, y_3)})
it is apparent that we need 
$h^{\prime\prime}$, $v^{\prime\prime}$ to be positive,
and furthermore, $h^\prime$, 
$v^\prime$ may not vanish for if $h^\prime v^\prime=0$, then 
$K(Y_2, Y_1)>0$. This means that
$v, h$ are increasing everywhere as they are equal to 
increasing functions $\sinh(r)$, $\cosh(r/2)$ for sufficiently large $r$.
As a starting point, we let $h(r)=e^{r/2}$ and
$v(r)=\e e^r$ on a neighborhood of $-\infty$, 
where $0<\e\ll 1$ is a parameter, 
and for these $h, v$ it is easy to compute 
that $\sec(\l_{v,h})<-\frac{1}{10}$. The main issue is to interpolate
$v, h$ in between while keeping curvature negative.

The graphs of $\sinh(r)$ and $\e e^r$ intersect at a point $r_\e\approx\e$, 
and  $v$ is obtained from the (strictly convex) function
$\max\{\sinh(r), \e e^r\}$ by smoothing it near $r_\e$ so that $v(r)=\e e^r$ 
for $r\le r_\e-2\e^4$ and $v(r)=\sinh(r)$ for for $r\ge r_\e+2\e^4$. 
While smoothing we need to be able to estimate
$\frac{v^\prime}{v}$, and also need to keep a definite lower bound on 
$\frac{v^{\prime\prime}}{v}$. This is accomplished by making
$v$ satisfy $(\ln(v))^{\prime\prime}>0$ over the smoothing interval, 
so that $\frac{v^{\prime\prime}}{v}>\left(\frac{v^\prime}{v}\right)^2$,
and $\frac{v^\prime}{v}$ is increasing, which allows us to
estimate $\frac{v^\prime}{v}$ by its values at the endpoints. 

Then we construct $h$ by bending down the graph of $\cosh(r/2)$ near 
$r=\e/2$ so that it eventually agrees with $e^{r/2}$. The tangent line
to the graph of $\cosh(r/2)$ near  $\e/2$
is almost horizontal, and it intersects
the graph of $e^{r/2}$ near $r=-\frac{8}{\e}$, and thus we  
bend $\cosh(\frac{r}{2})$ over the interval 
$[-\frac{8}{\e}, \frac{\e}{2}]$; note  that
on this interval $v=\e e^r$. 
Bending $h$ is done in two stages, which helps to control
$\frac{h^\prime}{h}$; during the first stage $h$ almost coincides
with the tangent line to $\cosh(r/2)$ at $\e/2$, and during
the second stage we bend $h$ upwards, so that $(\ln(h))^{\prime\prime}>0$.
At either stage we manage to estimate 
$\frac{h^{\prime\prime}}{h}$, $\frac{h^\prime}{h}$.

In fact, for technical reasons we build $v, h$ by
first producing ``easy-to-visualize''
$C^1$ functions $\mathbf v,\mathbf h$, 
which we then smooth via convolutions to 
get good lower bounds on the second derivatives of $v, h$
using Appendix~\ref{sec: appendix smooth convex}.

Finally, we estimate the curvature of $\l_{v,h}$ over two
disjoint intervals, one where $v$ is bent, and the other where
$h$ is bent. The main difficulty
is to control the ``mixed'' term (\ref{form: r(dr,y_1, y_2, y_3)}), 
and it turns out that the terms $K(Y_3, Y_2)$, $K(\ddr, Y_1)$, $K(\ddr, Y_2)$
in formulas (\ref{form: k(y_i,y_1)})--(\ref{form: k(dr,y_i)})
carry enough negative curvature to compensate the positivity
of the ``mixed'' term. 
Over the interval where $h$ is bent, 
$\frac{h^\prime}{h}$, $\frac{v^\prime}{v}$ are
kept bounded and $v=\e e^r$, so if $\e$  is small, then
$\frac{v}{h^2}$ becomes small when $\e\to 0$, and hence the ``mixed''
term is negligible. 
On the other hand, over the interval where $v$ is bent, 
the ``mixed'' term does not become small,
and instead it is compensated by $K(\ddr, Y_1)$ and 
$K(Y_3, Y_2)$, and the estimate hinges on 
how $c_{23}$ enters in $K(Y_3, Y_2)$ and in the ``mixed'' term.

The proof takes several pages of tedious curvature estimates, which
seems hard to shorten. Linear algebra arguments throughout this proof in 
repetitive, and  it is conceivable that they could be simplified 
by doing the computation in a different frame, 
e.g. the one that diagonalizes the curvature operator. Unfortunately, 
it seems that this would make the formulas for the sectional 
curvature of the coordinate planes much more complicated than those 
in formulas (\ref{form: k(y_i,y_1)})--(\ref{form: r(dr,y_1, y_2, y_3)}),
so at the end we would gain nothing.

After proving that sectional curvature is bounded above by a negative constant, 
we apply a result in~\cite{Bel-rh-warp} to check that 
$\pi_1(M\setminus S)$-action on the universal cover of $M\setminus S$
satisfies Gromov's definition of relative hyperbolicity, which proves
Part (i) of Theorem~\ref{thm: main thm}.

Part (ii) is proved in Section~\ref{sec: A-reg}.  
We keep $v=\e e^r$, and bend the function $h$ constructed 
above near $-\infty$ so that there it becomes equal to $\tau_\e+e^{r/2}$,
where $\tau_\e$ is a carefully chosen positive constant.
By formulas (\ref{form: k(y_i,y_1)})--(\ref{form: r(dr,y_1, y_2, y_3)}),
this choice of $h$ ensures that the sectional curvature
is bounded, and following the pattern of Part (i) 
we prove that the curvature is negative (but not bounded away from zero,
which would be impossible by Part (13) of 
Corollary~\ref{thm: intro-gr-theoretic-cor}). 
Furthermore, using
formulas (\ref{form: k(y_i,y_1)})--(\ref{form: r(dr,y_1, y_2, y_3)}), 
and the fact that $v=\e e^r$, $h=\tau_\e+e^{r/2}$ 
near $-\infty$, we are able to show that all the derivatives
of the curvature tensor have bounded components, so the metric is 
$A$-regular.

\section{Complex hyperbolic space in cylindrical coordinates}
\label{sec: cyl-coord}

We follow~\cite{Gol-book} for conventions and 
background on complex hyperbolic geometry. In particular,
the complex hyperbolic space $\chn$ is normalized to have
holomorphic sectional curvature $-1$, and we denote
the complex hyperbolic metric by ${\bf k}^n$, or simply by $\mathbf k$ 
for brevity.

The purpose of this section is to describe ``cylindrical
coordinates'' on $\chn$ about a complex hyperplane $\chm$. 
The boundary of the $r$-neighborhood of $\chm$ is a real 
hypersurface, which is denoted by $F(r)$, and is
referred to as an {\it $r$-tube}. Thus the metric on $\chn$
can then be written as ${\bf k}=dr^2+{\bf k}_r$ where ${\bf k}_r$ is the induced
Riemannian metric on $F(r)$, and we need to describe
${\bf k}_r$. This computation seems to be unknown to experts, 
so we give full details. 

The orthogonal projection $\pi\co\chn\to\chm$
is a fiber bundle whose fibers are complex geodesics, i.e.
totally geodesic complex submanifolds isometric to the
real hyperbolic plane of curvature $-1$~\cite[Theorem 3.1.9]{Gol-book}.
Restricting $\pi$ to $F(r)$ gives a circle bundle $\pi_r\co F(r)\to\chm$
whose fiber over $w\in\chm$ is the circle of radius $r$ in the complex 
geodesic $\pi^{-1}(w)$. The tangent bundle $TF(r)$ splits orthogonally
as $\cV(r)\oplus \cH(r)$, where $\cV(r)$ is tangent to the circle 
$\pi^{-1}(w)\cap F(r)$, and $\cH(r)$ is the orthogonal complement 
of $\cV(r)$. Thus any vector in $TF(r)$ can be uniquely decomposed as
$V+H$, where $V\in\cV(r)$ and $H\in \cH(r)$, and 
\[
{\bf k}_r(V+H,V+H)={\bf k}_r(V,V)+{\bf k}_r(H,H).
\] 
As we explain below
under suitable identifications,
the restriction of ${\bf k}_r$ to $\cV(r)$ is
$\sinh^2(r) d\theta^2$, and the restriction of ${\bf k}_r$ to 
$\cH(r)$ is $\cosh^2(\frac{r}{2}) {\bf k}^{n-1}$, 
where $d\theta^2$ is the standard metric on the unit
circle $\S1$.

As $\chn$ has nonpositive sectional curvature,
and $\chm$ is totally geodesic, the hyperplane
$\chm$ has infinite normal injectivity radius, and
the map $r\co\chn\setminus\chm\to (0,\infty)$ 
is a (smooth) Riemannian submersion with fibers $F(r)$.
The geodesic flow along  {\it radial} 
(i.e. orthogonal to $\chm$) geodesics 
induces a diffeomorphism between different tubes 
$\phi_{sr}\co F(s)\to F(r)$, $s, r>0$, and also 
preserves every complex geodesic orthogonal to $\chm$.
We are to prove later in this section that the differential
$d\phi_{sr}$ maps $H(s)$ to $H(r)$, and $V(s)$ to $V(r)$.
Fix an arbitrary radial unit speed geodesic $\g(r)$ 
with $\g(0)=w\in\chm$. 
 
That $d\phi_{sr}$ takes $\cV(s)$ to $\cV(r)$ is obvious 
because $\cV(r)$ is tangent both to $F(r)$ and the complex geodesic
$\pi^{-1}(w)$, thus $\phi_{sr}$ restricted to $\pi^{-1}(w)$
simply maps the $s$-circle centered at $w$ to the concentric $r$-circle,
whose tangent bundles are $\cV(s)$, $\cV(r)$, respectively. 
Since $\pi^{-1}(w)$ is a hyperbolic plane of 
curvature $-1$, its metric can be written as 
$dr^2+\sinh^2(r)d\theta^2$ where $d\theta^2$ is the standard
metric on the unit circle, so that the metric on 
$\cV(r)$ equals to $\sinh^2(r)d\theta^2$.
In the $(r,\theta)$-coordinates the map $\phi_{sr}$ becomes 
$(s,\theta)\to(r,\theta)$ because the lines $\theta=\mathrm{constant}$
are geodesics, and therefore, the vector field $\frac{\d}{\d\theta}$
is $d\phi_{sr}$-invariant. 

Let $\delta$ be a unit speed geodesic in $\chm$ with $\s(0)=w$.
By~\cite[Lemma 3.2.13]{Gol-book} the exponential map takes
the plane $\mathrm{span}(\g^\prime, \delta^\prime)\subset T_w\chm$ 
to a totally real (and hence totally geodesic)
$2$-plane $R_{\delta}$ in $\chn$ which intersects $\chm$ 
along $\delta$ and intersects the complex geodesic $\pi^{-1}(w)$
along $\g$.
If $t$ denotes the arclength parameter on $\delta$, then
the metric on $R_\delta$ can be written in the $(r,t)$-coordinates 
as $dr^2+q^2(r,t) dt^2$ where $r,t\in\mathbb R$, and in fact
$q(r,t)$ is independent of $t$ because the isometric 
$\mathbb R$-action on $R_\delta$ by translations along $\delta$
extends to an isometric $\mathbb R$-action on $\chn$. 
Since the sectional curvature of any totally real plane is 
$-1/4$~\cite[page 80]{Gol-book}, and since the sectional curvature 
of $dr^2+q^2(r) dt^2$ is $-\frac{q^{\prime\prime}}{q}$, we 
conclude that $q(r)=\cosh(\frac{r}{2})$.

Being totally geodesic, $R_{\delta}$ is preserved by the geodesic flow, 
and in the $(r,t)$-coordinates $\phi_{sr}$ maps $(s,t)$ to $(r,t)$ 
for $s,r\ge 0$. In particular,
the vector field $\frac{\d}{\d t}$ is $d\phi_{sr}$-invariant,
i.e. $d\phi_{sr}(\frac{\d}{\d t})=\frac{\d}{\d t}$.
It follows that $d\phi_{sr}(\frac{\d}{\d t})\in\cH$;
indeed $\frac{\d}{\d t}$ is clearly orthogonal to $\ddr$,
and $\frac{\d}{\d t}$ is orthogonal to 
$\frac{\d}{\d\theta}=J\ddr$ because the span of 
$\frac{\d}{\d t}$, $\ddr$ is totally real.

Since every vector in $T_w\chm$ is proportional to some 
$\s^\prime(0)=\frac{\d}{\d t}$, and
$\frac{\d}{\d t}$ is $d\phi_{sr}$-invariant,
the linear maps
$d\phi_{0r}\co T_w\chm\to \cH (r)$ and $d\phi_{sr}\co \cH(s)\to \cH (r)$,
are injective, and hence they must be isomorphisms by dimension reasons.

The length of $\frac{\d}{\d t}\in TR_\delta\subset\cH (r)$ 
is $\cosh(\frac{r}{2})$,
so the metric on $\cH(r)$ can be written as
$\cosh^2(\frac{r}{2}) {\bf k}^{n-1}$, or more explicitly, for $H\in\cH(r)$
\[
{\bf k}_r(H,H)=\cosh^2\left(\frac{r}{2}\right)
{\bf k}^{n-1}(d\phi_{0r}^{-1}(H),d\phi_{0r}^{-1}(H)).
\] 
Thus the metric $q_{r}$ on $TF(r)=\cV(r)\oplus\cH(r)$ 
can be written as 
$\sinh^2(r) d\theta^2 + \cosh^2(\frac{r}{2}) {\bf k}^{n-1}$. 
It is clear that $\sinh^2(r) d\theta^2 + \cosh^2(\frac{r}{2}){\bf k}^{n-1}$ 
is a Riemannian submersion metric whose base is the 
$\cosh(\frac{r}{2})$ multiple of $\chm$, and 
fibers are standard circles of radius $\sinh(r)$.

Finally, fix an arbitrary tube, denote it by $F$,
and use the diffeomorphisms $\phi_{sr}$ to pull back 
$\cV(r)$, $\cH(r)$, ${\bf k}_r$ to $F$. Since the pullbacks of 
$\cV(r)$, $\cH(r)$ are independent of $r$, we just denote 
the corresponding subbundles of $TF$ by $\cH$, $\cV$.
As $\pi\circ\phi_{sr}=\pi$,
the projections $\pi_r\co F(r)\to\chm$ all get identified
via $\phi_{sr}$ to a circle bundle projection $F\to\chm$ 
whose differential takes $\cV$ to zero, and maps $\cH$ onto $T\chm$.
In summary, the complex hyperbolic manifold
$\chn\setminus\chm$ is now written as $(0,\infty )\times F$ equipped 
with the metric 
\[
dr^2+\sinh^2(r) d\theta^2 + \cosh^2\left(\frac{r}{2}\right) {\bf k}^{n-1}.
\]

\section{Basis and brackets}
\label{sec: basis}

We borrow the notations $F, \cH,\cV, k_n$ from 
Section~\ref{sec: cyl-coord}, and fix an open interval $I$.
Given positive smooth functions $v$, $h$ on $I$, 
let $\l_{v,h,r}$ be the Riemannian submersion metric 
on $TF=\cV\oplus\cH$ with base $h\chm$ and fiber $v\S1$; 
we also write 
\[
\l_{v,h,r}:=v^2 d\theta^2 + h^2 {\bf k}^{n-1} .
\] 
This gives rise to the metric 
$\l_{v,h}=dr^2+\l_{v,h,r}$ on $I\times F$. For brevity
we sometimes suppress $v,h$ and label tensors associated with
$\l_{v,h}$, $\l_{v,h,r}$ by $\l$, $\l_r$, respectively.

\begin{ex}
If $I=(0,\infty )$, $v=\sinh(r)$ and $h=\cosh(\frac{r}{2})$, 
then  $\l_{v,h,r}={\bf k}_r$ so that $\l_{v,h}=dr^2+{\bf k}_r={\bf k}^n$
is the complex hyperbolic metric. 
\end{ex}

The purpose of this section is to introduce a convenient local orthonormal
frame on $I\times F$ in which the curvature of 
$\l_{v,h}$ will be computed.
To this end denote $\frac{\d}{\d r}$ by $\ddr$, and 
$\frac{\d}{\d\theta}$ by $X_1$.
Fix $z\in I\times F$ and let $w\in\chm$ be the image
of $z$ under the map $p\co I\times F\to\chm$ 
obtained by composing the projection to the second
factor $I\times F\to F$ with the circle bundle 
$F\to \chm$.

Let $\{\check X_i\}$, with $1<i<2n$, 
be an arbitrary orthonormal frame defined on a 
neighborhood of $w$ in $\chm$ such that 
$[\check X_i,\check X_j]$ vanishes at $w$.
(By a standard argument any orthonormal basis in $T_w\chm$ can be extended
to some $\{\check X_i\}$ as above).   
Let $X_i$ be the vector field obtained by lifting $\check X_i$ to
a horizontal vector field in $\cH\subset TF$, 
and then pulling it back via the projection $I\times F\to F$.
Then $\ddr, X_1,\dots , X_{2n-1}$ is an orthogonal frame near $z$ such that
for all $i,j=1,\dots, 2n-1$ we have:
\begin{itemize}
\item [\rm(1)] $\langle X_1,X_1\rangle_\l=v^2$, 
and $\langle X_i, X_i\rangle_\l =h^2$ for $i>1$. 
\item [\rm(2)] $[X_i, X_j]$ is tangent to level surfaces of $r$,
\item  [\rm(3)] $[X_i, \ddr ]=0$  because 
$X_i$ is invariant under the flow of $\ddr$,
\item [\rm(4)] 
$[X_i, X_1]=0$ because $X_i$ is invariant under the flow of $X_1$ on $F$
that corresponds to rotation about $\chm$ in $\chn$. 
\item [\rm(5)] $[X_i, X_j]$ is vertical at $z$ because 
$[\check X_i,\check X_j]$ vanishes at $w$. 
\end{itemize}
By (4)-(5) there are ``structure constants''
$c_{ij}\in\mathbb R$ with $[X_i, X_j]=c_{ij}X_1$  at $z$.
Note that $c_{ij}=-c_{ji}$ and $c_{1j}=0$ for all $i,j$,
yet $c_{ij}$ generally depend on $z$ and $\{X_i\}$.
In Section~\ref{sec: curv of chn} below we derive some
estimates and identities involving $c_{ij}$'s.

The corresponding orthonormal frame $\ddr$, 
$Y_1=\frac{1}{v}X_1$, $Y_i=\frac{1}{h}X_i$, $i>1$ 
enjoys the following properties:
 \begin{itemize}
\item[\rm(i)] $[Y_i, Y_j]=\frac{1}{h^2}[X_i, X_j]=
c_{ij}\frac{v}{h^2}Y_1$ for $i,j>1$,
\item[\rm(ii)] $[Y_i, Y_1]=\frac{1}{hv}[X_i, X_1]=0$,
\item[\rm(iii)]  $[Y_1,\ddr ]=\frac{v^\prime}{v}Y_i$, 
and $[Y_i,\ddr ]=\frac{h^\prime}{h}Y_i$ for $i>1$,
\end{itemize}
where the first equalities in (i), (ii) hold because 
any function of $r$ has zero derivative in the direction of $X_i$, 
and (iii) follows from $[X_i, \ddr ]=0$.

\section{A curvature formula in the complex hyperbolic space}
\label{sec: curv of chn}

In Appendix~\ref{sec: components-of-curv-tensor} we explain, 
following~\cite{BW, Bel-rh-warp},
how to relate the components of the 
curvature tensors of $\l_{v,h}$ and $\l_{v,h,r}$ 
in the basis $\{\ddr, Y_1, \dots ,Y_{2n-1}\}$, but more work is needed to 
compute these components in terms of $v,h$ only, and this section provides one 
of the main steps in the computation. 
Specifically, we compute the components of the curvature tensor 
of ${\bf k}^n$ in the basis $\{\ddr, Y_1, \dots ,Y_{2n-1}\}$, and 
establish some useful identities on the structure constants $c_{ij}$.

{\bf Convention:}\it\ 
In this section $\langle\cdot,\cdot\rangle$, $R$, $J$  denote the metric, 
the curvature tensor, and the complex structure on $\chn$, respectively; 
in other words, we suppress the subscript $\mathbf k$ for all tensors throughout 
the section.\rm

In the complex hyperbolic space one has the following explicit formula 
for the $(4,0)$-curvature tensor $R$ in terms of $\bf k$ and $J$
(see~\cite[Proposition IX.7.3]{KN-II}): 
for any tangent vectors $X$, $Y$, $Z$, $W$
\begin{eqnarray*}
& 4\langle R (X,Y)Z,W\rangle=
\langle X, Z\rangle  \langle Y, W\rangle - 
\langle X, W\rangle \langle Y, Z\rangle +\\
& \langle X, JZ\rangle \langle Y, JW\rangle -
\langle X, JW\rangle \langle Y, JZ\rangle +
2\langle X,JY\rangle\langle Z, JW\rangle.
\end{eqnarray*}
If $X$, $Y$, $Z$, $W$ are vector fields in 
the $\bf k$-orthonormal basis
$\{\ddr, Y_1, \dots Y_{2n-1}\}$ with 
$X_1=Y_1\sinh(r)$ and $X_i=Y_i\cosh(\frac{r}{2})$ for $i>1$, 
then
\begin{eqnarray}
& \label{formula: mixed chn with dr}
\langle R(\ddr ,Y_1) Y_i,Y_j\rangle=
\frac{1}{2}\langle\ddr , JY_1\rangle\langle Y_i ,JY_j\rangle=
-\frac{1}{2}\langle Y_i ,JY_j\rangle\ \ \mathrm{if}\ \ i,j>1,\\
& \label{formula: mixed chn without dr}
\langle R (Y_i,Y_j)Y_j,Y_k\rangle =0\ \ 
\mathrm{if}\ \ i,j,k\ \ \mathrm{are\ distinct\ and}\ \ 1\in\{i,j,k\},\\
& \label{formula: sec cuv of chn}
\langle R (Y_i,Y_j)Y_j,Y_i\rangle=
-\frac{1}{4} - \frac{3}{4}\langle Y_i,JY_j\rangle^2
\ \ \mathrm{if}\ \ i\neq j.
\end{eqnarray}
In computing (\ref{formula: mixed chn with dr}) 
the first two summands of $4\langle R (X,Y)Z,W\rangle$
vanish because $\{\ddr, Y_1, \dots Y_{2n-1}\}$
is orthonormal, and the next two summands vanish because
$\langle\ddr , JY_k\rangle =0$ unless $k=1$.
The last equality in  (\ref{formula: mixed chn with dr}) is true because
$\langle\ddr , JY_1\rangle=-1$ with respect to the standard complex structure. 

In computing (\ref{formula: mixed chn without dr}), 
the first two summands vanish as $i,j,k$ are distinct and 
$\{Y_i\}$ is orthonormal.
The fourth summand vanishes because 
$Y_j$, $JY_j$ are orthogonal. Finally, 
$\langle Y_i,JY_j\rangle\langle Y_j, JY_k\rangle =0$ because 
$\cH$ is $J$-invariant and orthogonal to $\cV$,
and $1\in\{i,j,k\}$.

In computing (\ref{formula: sec cuv of chn}), 
the first summand vanishes and the second summand is $-1$
because $\{Y_i\}$ is orthonormal. The fourth summand vanishes
as $\langle Y_i,JY_i\rangle = 0$, and the other two summands 
may survive and add up to the promised expression since 
$\langle Y_i,JY_j\rangle=-\langle JY_i,Y_j\rangle$.

\begin{lem}\label{lem: c_ij via inner product}
$\langle Y_i,JY_j\rangle=2c_{ij}$.
\end{lem}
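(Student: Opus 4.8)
The goal is to identify the structure constant $c_{ij}$, defined by $[X_i,X_j]=c_{ij}X_1$ at $z$, with the geometric quantity $\tfrac12\langle Y_i,JY_j\rangle$ computed in $\chn$. The natural tool is the relation between the Lie bracket of horizontal lifts and the vertical component of the bracket for a Riemannian submersion, combined with the explicit knowledge of $J$ on $\chn$. I would work entirely in the complex hyperbolic setting, i.e.\ take $v=\sinh(r)$, $h=\cosh(r/2)$, since the $c_{ij}$ depend only on the frame $\{X_i\}$ and the point $z$, not on the choice of warping functions.

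\textbf{Step 1: express $c_{ij}$ via the connection.} Since $[X_i,X_j]$ is vertical at $z$ (property (5) in Section~\ref{sec: basis}) and $X_1$ spans $\cV$, write $c_{ij}=\langle[X_i,X_j],X_1\rangle_{\bf k}/\langle X_1,X_1\rangle_{\bf k}$ at $z$. Using the torsion-free property of the Levi-Civita connection, $\langle[X_i,X_j],X_1\rangle = \langle\nabla_{X_i}X_j,X_1\rangle-\langle\nabla_{X_j}X_i,X_1\rangle$. Because $X_i,X_j$ are horizontal and $X_1$ vertical, each of these is $-\langle X_j,\nabla_{X_i}X_1\rangle + \langle X_i,\nabla_{X_j}X_1\rangle$ after moving the connection onto $X_1$; alternatively, one recognizes this as (twice) the O'Neill $A$-tensor $\langle A_{X_i}X_j,X_1\rangle$. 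So the content reduces to computing $\nabla X_1$, the covariant derivative of the rotational Killing field.

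\textbf{Step 2: use $X_1=J\partial_r$ and the parallelism of $J$.} The key structural fact from Section~\ref{sec: cyl-coord} is that the vertical direction is $\tfrac{\partial}{\partial\theta}=J\partial_r$ (used already in deriving formula (\ref{formula: mixed chn with dr})). On the complex hyperbolic space $\nabla J=0$, so $\nabla_{X_i}X_1 = \nabla_{X_i}(J\partial_r) = J(\nabla_{X_i}\partial_r)$. Now $\nabla_{X_i}\partial_r$ is governed by the second fundamental form of the tube $F(r)$: since the metric is $dr^2+\sinh^2 r\,d\theta^2+\cosh^2(r/2){\bf k}^{n-1}$, the shape operator of $F(r)$ in the horizontal directions is multiplication by $\tfrac{d}{dr}\log\cosh(r/2)=\tfrac12\tanh(r/2)$, while along $\partial/\partial\theta$ it is $\coth r$. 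Hence $\nabla_{X_i}\partial_r = \tfrac12\tanh(r/2)\,X_i$ for $i>1$ (modulo vertical/$\partial_r$ terms that drop out after pairing with $X_1$). Therefore $\langle\nabla_{X_i}X_1,X_j\rangle = \tfrac12\tanh(r/2)\langle JX_i,X_j\rangle$ at $z$, and likewise with $i,j$ swapped, giving $c_{ij}\langle X_1,X_1\rangle = \langle JX_i,X_j\rangle\tanh(r/2)\cdot(\text{something})$ — I will need to track the constant carefully. Re-expressing in the orthonormal frame $Y_1=X_1/\sinh r$, $Y_i=X_i/\cosh(r/2)$, and using $\langle X_1,X_1\rangle=\sinh^2 r$, $\langle X_i,X_j\rangle$-normalizations, the $\tanh(r/2)$ and $\sinh r=2\sinh(r/2)\cosh(r/2)$ factors should combine to leave exactly $c_{ij}=\tfrac12\langle Y_i,JY_j\rangle$, equivalently $\langle Y_i,JY_j\rangle=2c_{ij}$.

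\textbf{Main obstacle.} The genuine difficulty is bookkeeping the normalization constants: there are several competing factors — the curvature normalization ($\chn$ has holomorphic sectional curvature $-1$, totally real planes have curvature $-1/4$), the $\sinh r$ versus $\cosh(r/2)$ scalings relating $X_i$ to $Y_i$, and the coefficient in the shape operator of the tube. A sign/factor-of-two slip anywhere produces the wrong constant. A clean way to pin it down is to verify the identity in a single explicit model — e.g.\ $\chtwo$, where $\cH$ is a single $J$-invariant real $2$-plane, so there is essentially one nonzero $c_{ij}$ (namely $c_{23}$ with $Y_3=JY_2$), and one can compute both sides directly from the metric $dr^2+\sinh^2 r\,d\theta^2+\cosh^2(r/2){\bf k}^1$ as a doubly-warped product, reading off $[X_2,X_3]$ from the failure of integrability of $\cH$. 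Matching that one case fixes the universal constant, and the general case follows since both sides are bilinear and $J$-equivariant in the horizontal indices. Alternatively — and this is likely the route the author takes — one can extract the constant by comparing with the already-established curvature formula (\ref{formula: mixed chn with dr}): the mixed O'Neill term $\langle R(\partial_r,Y_1)Y_i,Y_j\rangle$ can be computed independently via the submersion $A$-tensor as a multiple of $c_{ij}$, and setting that equal to $-\tfrac12\langle Y_i,JY_j\rangle$ from (\ref{formula: mixed chn with dr}) yields $\langle Y_i,JY_j\rangle=2c_{ij}$ with no further calculation. I would present this second argument as the proof proper, citing (\ref{formula: mixed chn with dr}) and the standard O'Neill formula for $\langle R(U,X)X,Y\rangle$ with $U$ vertical.
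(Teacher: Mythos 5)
Your second argument --- equating the complex hyperbolic value $-\tfrac12\langle Y_i,JY_j\rangle$ from (\ref{formula: mixed chn with dr}) with the warped-product/O'Neill expression for $\langle R(\ddr ,Y_1) Y_i,Y_j\rangle$ in terms of $c_{ij}$ --- is exactly the paper's proof of this lemma. The only thing you gloss over with ``no further calculation'' is that the multiple of $c_{ij}$ must still be evaluated: with $v=\sinh(r)$, $h=\cosh(\frac{r}{2})$ one checks $\frac{v}{h^2}\left(\ln\frac{v}{h}\right)^\prime=1$, so the mixed term equals $-c_{ij}$ on the nose, and that short hyperbolic-trig identity is precisely what pins down the factor $2$.
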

\begin{proof} 
The result follows by combining (\ref{formula: mixed chn with dr}) with
\begin{eqnarray}
& \langle R (\ddr ,Y_1) Y_i,Y_j\rangle=
\langle [Y_j,Y_i],Y_1\rangle
\left(\ln\frac{v}{h}\right)^\prime =
-c_{ij}\frac{v}{h^2}\left(\ln\frac{v}{h}\right)^\prime=-c_{ij},
\end{eqnarray}
where the first equality comes 
from Appendix~\ref{sec: components-of-curv-tensor} and $[Y_i,Y_1]=0$,
the second equality follows from 
$[Y_i, Y_j]=c_{ij}\frac{v}{h^2}Y_1$, and the last equality holds 
by explicit computation with $v=\sinh(r)$ and $h=\cosh(\frac{r}{2})$. 
\end{proof}

\begin{rmk}
As $|\langle Y_i,JY_j\rangle|\le 1$, 
Lemma~\ref{lem: c_ij via inner product}, and 
formulas (\ref{formula: mixed chn with dr}), 
(\ref{formula: sec cuv of chn}) imply
\begin{eqnarray}
& \label{formula: bound on c_ij}
|\langle R(\ddr ,Y_1) Y_i,Y_j\rangle|=|c_{ij}|\le\frac{1}{2},\\
& \label{formula: sec cuv of chn via c_ij}
\langle R(Y_i,Y_j)Y_j,Y_i\rangle=
-\frac{1}{4} - 3c_{ij}^2\in [-1,-\frac{1}{4}]
\ \ \mathrm{for}\ \ i\neq j.
\end{eqnarray}
\end{rmk}

\begin{lem}\label{lem: sec curv on base}
$\langle R (\check X_i,\check X_j)\check X_j,\check X_i\rangle=
-\frac{1}{4} - 3c_{ij}^2$.
\end{lem}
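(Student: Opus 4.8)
The plan is to exploit that $\chm$ is a totally geodesic \emph{complex} submanifold of $\chn$, so it is itself a copy of complex hyperbolic space of holomorphic curvature $-1$ and its curvature tensor is the restriction of $R$ to vectors tangent to $\chm$. In particular the explicit formula for the $(4,0)$ curvature tensor stated at the beginning of this section applies verbatim to vectors tangent to $\chm$.

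First I would feed $X=\check X_i$, $Y=\check X_j$, $Z=\check X_j$, $W=\check X_i$ (with $i\neq j$) into that formula. Since $\{\check X_i\}$ is orthonormal, $\langle\check X_i, J\check X_i\rangle=0$, and $J$ restricts to the complex structure of $\chm$, the very computation that produced (\ref{formula: sec cuv of chn}) goes through word for word and gives
\[
\langle R(\check X_i,\check X_j)\check X_j,\check X_i\rangle=-\frac14-\frac34\,\langle\check X_i, J\check X_j\rangle^{2}.
\]
So it remains to show $\langle\check X_i, J\check X_j\rangle=2c_{ij}$. Here I would use that, by construction in Sections~\ref{sec: cyl-coord}--\ref{sec: basis}, $X_i=d\phi_{0r}(\check X_i)$ is the $\cH$--horizontal lift of $\check X_i$, that $d\phi_{0r}\co T_w\chm\to\cH(r)$ is a similarity with dilation factor $h=\cosh(\frac r2)$, and that $\cH$ is $J$--invariant with $d\phi_{0r}$ complex linear, i.e. $d\phi_{0r}\circ J=J\circ d\phi_{0r}$ on $T_w\chm$ (equivalently, the orthogonal projection $\chn\to\chm$ onto a totally geodesic complex submanifold of a K\"ahler manifold is holomorphic; this can also be extracted from the description of $\cH$ and of the totally real planes $R_\delta$ in Section~\ref{sec: cyl-coord}). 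Then
\[
\langle\check X_i, J\check X_j\rangle=\frac1{h^{2}}\langle d\phi_{0r}\check X_i,\ d\phi_{0r}(J\check X_j)\rangle=\frac1{h^{2}}\langle X_i, JX_j\rangle=\langle Y_i, JY_j\rangle=2c_{ij},
\]
the last equality being Lemma~\ref{lem: c_ij via inner product}. Combining the two displays yields $\langle R(\check X_i,\check X_j)\check X_j,\check X_i\rangle=-\frac14-3c_{ij}^{2}$.

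The one genuinely geometric input is the complex linearity of $d\phi_{0r}$ in the middle step; everything else is a transcription of computations already carried out in this section. If one prefers to avoid invoking holomorphicity of the projection, an alternative route is to combine O'Neill's curvature formula for the Riemannian submersion $F(r)\to h\,\chm$ with the Gauss equation for the hypersurface $F(r)\subset\chn$ and the value of $\langle R(Y_i,Y_j)Y_j,Y_i\rangle$ recorded in (\ref{formula: sec cuv of chn via c_ij}): the shape operator of $F(r)$ in the frame $\{Y_1,\dots,Y_{2n-1}\}$ is $\operatorname{diag}(v'/v,\,h'/h,\dots,h'/h)$ with $v=\sinh r$, $h=\cosh(\frac r2)$, and these three ingredients pin down $\langle R(\check X_i,\check X_j)\check X_j,\check X_i\rangle$ uniquely, a short computation again giving $-\frac14-3c_{ij}^{2}$.
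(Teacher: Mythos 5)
Your argument is correct, but it is not the paper's argument. The paper proves the lemma by a one-line continuity trick: since $X_i$ is the horizontal lift of $\check X_i$ and $\cosh(0)=1$, the frame $\{Y_i\}$ converges to $\{\check X_i\}$ as $r\to 0$ (the structure constants $c_{ij}$ being independent of $r$ because the fields are pulled back from $F$), so the identity (\ref{formula: sec cuv of chn via c_ij}), valid on every tube, passes to the limit $r=0$ and gives the claim on $\chm$ directly. You instead recompute on $\chm$ itself: restrict the explicit $(4,0)$-curvature formula to $T_w\chm$ (legitimate, as $\chm$ is totally geodesic and complex) to get $-\frac14-\frac34\langle\check X_i,J\check X_j\rangle^2$, and then transport Lemma~\ref{lem: c_ij via inner product} down to the base via the claim that $d\phi_{0r}$ is complex linear. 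That claim is true, but your justification for it is the weak point: it does not simply ``extract'' from the description of $\cH$ and the planes $R_\delta$ in Section~\ref{sec: cyl-coord} (a scaled linear isometry between Hermitian spaces need not commute with $J$), and the blanket statement about orthogonal projections onto totally geodesic complex submanifolds of arbitrary K\"ahler manifolds is more than you need and not what you should lean on. What makes it true here is either the standard fact that the projection $\chn\to\chm$ is induced by a $\C$-linear projection of $\C^{n,1}$ and hence is holomorphic, or the observation that $d\phi_{0r}$ equals $\cosh(\frac r2)$ times parallel transport along the radial geodesic (the relevant Jacobi fields have $J'(0)=0$ and the normal curvature operator is scalar $\tfrac14$ on $T_w\chm$), and parallel transport commutes with $J$ because the metric is K\"ahler. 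With that input supplied, your proof closes, and your alternative Gauss-equation/O'Neill route is also viable provided one does not quote (\ref{formula: sec curv in X_i, X_j}), which the paper derives from this very lemma. The trade-off: the paper's limit argument avoids any holomorphicity discussion at the cost of an implicit continuity-at-$r=0$ step, while your route makes the underlying complex geometry explicit but requires you to actually prove the complex linearity of the lift.
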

\begin{proof} For $i>1$,
the vector field $X_i$  is the horizontal lift of $\check X_i$
under the orthogonal projection of $\chn\to\chm$,
so on $\chm$ we have $\check X_i=X_i=Y_i$ as $\cosh(0)=1$. 
By continuity 
the formula (\ref{formula: sec cuv of chn via c_ij}) still holds 
for $r=0$ giving the promised result.
\end{proof}

\begin{rmk}
Since $\cH$ is $J$-invariant, $JY_i\in \cH$ for $i>1$, so
writing $Y_i$ in the orthonormal basis 
$\{Y_j\}$, $j>1$ of $\cH$, we get
\[
JY_i=\sum_{j>1}\langle JY_i , Y_j\rangle Y_j=-2\sum_{j>1} c_{ij} Y_j,
\] 
and since $|JY_i|=1$, we obtain the identity
\begin{eqnarray}
\label{formula: sum of c_ij}
\sum_{j>1} c_{ij}^2=\frac{1}{4}.
\end{eqnarray}
\end{rmk}

The following lemma and subsequent examples are not used
elsewhere in the paper, yet they may help understand $c_{ij}$'s
via complex hyperbolic geometry.

\begin{lem} If $i\neq j$, then $c_{ij}=0$ iff $Y_i, Y_j$ 
span a totally real plane. 
\end{lem}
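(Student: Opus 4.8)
The statement relates vanishing of the structure constant $c_{ij}$ to a geometric condition on the plane $\mathrm{span}(Y_i,Y_j)$ in $\chn$ at a point $z$ on a tube $F(r)$. The natural bridge is Lemma~\ref{lem: c_ij via inner product}, which gives $\langle Y_i,JY_j\rangle=2c_{ij}$. So $c_{ij}=0$ if and only if $Y_i$ is $J$-orthogonal to $Y_j$. The plan is to unwind the definition of a totally real plane and show it is equivalent to this $J$-orthogonality. Recall that a real $2$-plane $P=\mathrm{span}(u,w)$ (with $u,w$ orthonormal) is \emph{totally real} when $JP\perp P$, equivalently when $Ju$ is orthogonal to both $u$ and $w$; since $\langle Ju,u\rangle=0$ automatically by skew-symmetry of $J$ with respect to the metric, the condition reduces to $\langle Ju,w\rangle=0$. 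Applying this with $u=Y_i$, $w=Y_j$ (both unit vectors in the $\mathbf k$-orthonormal frame) gives: $\mathrm{span}(Y_i,Y_j)$ is totally real iff $\langle JY_i,Y_j\rangle=0$ iff $\langle Y_i,JY_j\rangle=0$ iff $c_{ij}=0$.

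\textbf{Steps in order.} First I would recall, citing~\cite{Gol-book} (e.g. the discussion around \cite[Lemma 3.2.13]{Gol-book} or the definition on~\cite[page 80]{Gol-book}), that a $2$-plane $P$ spanned by orthonormal vectors $u,w$ is totally real precisely when $\langle Ju,w\rangle=0$; the other potential obstruction $\langle Ju,u\rangle$ always vanishes because $J$ is skew-adjoint for the Hermitian metric. Second, I would specialize to $u=Y_i$, $w=Y_j$: these are orthonormal since $\{\ddr,Y_1,\dots,Y_{2n-1}\}$ is a $\mathbf k$-orthonormal frame, and for $i,j>1$ both lie in the $J$-invariant horizontal subbundle $\cH$, so $JY_i\in\cH$ as well and it makes sense to test orthogonality within the frame. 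Third, I would invoke Lemma~\ref{lem: c_ij via inner product} to rewrite $\langle JY_i,Y_j\rangle=-\langle Y_i,JY_j\rangle=-2c_{ij}$, so the totally-real condition becomes exactly $c_{ij}=0$. A remark may be needed for the case $1\in\{i,j\}$: since $c_{1j}=0$ for all $j$ by construction, and $Y_1=J\ddr$ forces $JY_1$ to be vertical while $Y_j\in\cH$ for $j>1$, the plane $\mathrm{span}(Y_1,Y_j)$ is always totally real when $j>1$, consistent with $c_{1j}=0$; for $j>1$ the plane $\mathrm{span}(Y_1,Y_j)$ sits inside a totally real $2$-plane $R_\delta$ as constructed in Section~\ref{sec: cyl-coord}. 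Since the lemma hypothesizes $i\neq j$, these boundary cases are harmless.

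\textbf{Main obstacle.} There is essentially no hard step here — the lemma is a direct translation via Lemma~\ref{lem: c_ij via inner product} — so the only thing to be careful about is bookkeeping around the complex structure $J$: namely making sure the characterization ``totally real $\iff\langle Ju,w\rangle=0$ for an orthonormal basis'' is applied to the correct orthonormal pair, and confirming that this characterization is basis-independent (it is, because if $\langle Ju,w\rangle=0$ for one orthonormal basis of $P$ then $JP\perp P$, which is intrinsic to $P$). I expect the write-up to be just a few lines, mostly a pointer to~\cite{Gol-book} for the definition of totally real planes plus a one-line computation using the already-established identity $\langle Y_i,JY_j\rangle=2c_{ij}$.
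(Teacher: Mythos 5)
Your proposal is correct and matches the paper's proof essentially verbatim: both reduce "totally real" to the single condition $\langle Y_i,JY_j\rangle=0$ (the self-pairings $\langle JY_i,Y_i\rangle$, $\langle JY_j,Y_j\rangle$ vanishing automatically by skew-adjointness of $J$), and then invoke Lemma~\ref{lem: c_ij via inner product} to identify this with $c_{ij}=0$. The extra remarks you add (basis-independence, the case $1\in\{i,j\}$) are harmless but not needed.
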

\begin{proof} A plane is called {\it totally real}
if it is orthogonal to its $J$-image. Since 
$Y_i$ is always orthogonal to $JY_i$, the span of
$Y_i, Y_j$ is orthogonal to its image iff $Y_i$ and
$JY_j$ are orthogonal, which
by Lemma~\ref{lem: c_ij via inner product} is equivalent to
$c_{ij}=0$. 
\end{proof}

\begin{ex}
By (ii) of Section~\ref{sec: basis}, $c_{i1}=0$; alternatively 
$\langle Y_i,JY_1\rangle = 0$ as $\cH$ is $J$-invariant, 
and $\langle Y_1,JY_1\rangle = 0$. 
So (\ref{formula: sec cuv of chn}) implies that
$\langle R (Y_i,Y_1)Y_1,Y_i\rangle=
-\frac{1}{4}$ for $i>1$, which by~\cite[Lemma 3.2.13]{Gol-book}
also follows from the fact that $\{Y_1, Y_i\}$ 
spans a totally real plane.
\end{ex}

\begin{ex} By Lemma~\ref{lem: c_ij via inner product},
$|c_{ij}|=\frac{1}{2}$ iff $Y_i=\pm JY_j$, which in turn is
equivalent to saying that
$\{Y_i, Y_j\}$ spans a complex geodesic, i.e.
a totally geodesic complex line. 
Note that complex geodesics
have sectional curvature $-1$. 
If $n=2$, then $\cH$ has complex dimension one, 
which forces $Y_3=\pm JY_2$
so that  $c_{23}=\pm \frac{1}{2}$. 
\end{ex}

\section{Computing $A$ and $T$ tensors}
\label{sec: A and T}

In this section we compute the $A$ and $T$ tensors of the metric
\[
q_{v,h,r}:=\frac{1}{h^2}\l_{v,h,r}=\frac{v^2}{h^2}d\theta^2+{\bf k}^{n-1},
\]
which is a Riemannian submersion metric with base $\chm$ 
and fiber $\frac{v}{h} \S1$. 
We often suppress $v,h$ and denote $q_{v,h,r}$ by $q_r$,
and the associated norm by $|\cdot |_{q_r}$. 

First show that the fibers of the Riemannian submersion
are totally geodesic i.e. $T=0$. Indeed,
rescaling takes geodesics to geodesics, so 
it suffices to give a proof for $\l_{v,h,r}$. 
Then we need to show that $\nabla_{Y_1} Y_1$ is proportional to $\ddr$,
which is obvious because $\nabla_{Y_1} Y_1$ is tangent to 
the complex geodesic orthogonal to $\chm$, and $\nabla_{Y_1} Y_1$ is orthogonal to $Y_1$
because $Y_1$ has $\l_r$-length one.

Next compute the $A$ tensor of $q_r$ 
at the point $z$ in the basis $\{X_i\}$ chosen 
in Section~\ref{sec: basis}.
If $i,j>1$, then $A_{X_i} X_j$ is the $q_r$-orthogonal projection 
of $\frac{1}{2}[X_i,X_j]=\frac{1}{2} c_{ij}X_1$
to $\cV$, hence $A_{X_i} X_j=\frac{c_{ij}}{2} X_1$. 
Since $|X_1|_{q_r}=\frac{v}{h}$,  
we get for $i,j>1$ that $|A_{X_i}X_j|_{q_r}=
\frac{|c_{ij}|}{2}\frac{v}{h}$, and in particular,
$|A_{X_i}X_j|_{q_r}\le\frac{v}{4h}$. 
By~\cite[9.21d]{Bes} for $i>1$ we have
\[
\langle A_{X_i} X_1, X_j\rangle_{q_r}=
-\langle A_{X_i} X_j, X_1\rangle_{q_r}=-\frac{v^2}{h^2}\frac{c_{ij}}{2},
\] 
%
%
so since $\{X_i\}$ is 
a $q_r$-orthonormal basis in $\cH$ and $A_{X_i} X_1$ is horizontal, 
we get
\[
A_{X_i} X_1=-\frac{v^2}{2h^2}\sum_{j>1} c_{ij}X_j,\text{\   so that\   }
|A_{X_i} X_1|_{q_r}=\frac{v^2}{2h^2}\sqrt{\sum_{j>1} c_{ij}^2}=
\frac{v^2}{4h^2}
\]
where the second equality in the latter formula follows from 
(\ref{formula: sum of c_ij}).
With the $A$ tensor is computed, O'Neill's 
formulas~\cite[Theorem 9.28]{Bes} allow us
to calculate the sectional curvature: for distinct $i,j>1$ we get
\begin{eqnarray}
&  \label{formula: sec curv in X_i, X_1}
\langle R_{q_r}(X_1 ,X_i) X_i, X_1\rangle_{q_r}=
|A_{X_i} X_1|^2_{q_r}=\frac{v^4}{16h^4}\\
&  \label{formula: sec curv in X_i, X_j}
\langle R_{q_r}(X_i ,X_j) X_j,X_i\rangle_{q_r}=
-\frac{1}{4} - 3c_{ij}^2- 3c_{ij}^2\frac{v^2}{4h^2}
\end{eqnarray}
where (\ref{formula: sec curv in X_i, X_j}) depends on
Lemma~\ref{lem: sec curv on base}.

\section{Sectional curvatures of coordinate planes}
\label{sec: seccurv of coord planes}

Since the $(4,0)$-curvature tensor scales like the metric,
we get for $i>1$
\begin{eqnarray*}
& \langle R_{\l_r}(Y_1 ,Y_i) Y_i,Y_1\rangle_{\l_r}=
\frac{h^2}{v^2h^2}\langle R_{q_r}(X_1 , X_i) X_i, X_1\rangle_{q_r}
\end{eqnarray*}
and hence formulas (\ref{formula: sec curv in X_i, X_1}),
(\ref{form: curv of warped prod})  imply
\begin{eqnarray}\label{form: lambd-curv of 1i-plane}
& \langle R_{\l}(Y_1 ,Y_i) Y_i,Y_1\rangle_{\l}=
\frac{v^2}{16h^4}-\frac{v^\prime}{v}\frac{h^\prime}{h}.
\end{eqnarray}
Similarly, for distinct $i,j>1$ 
\begin{eqnarray*}
& \langle R_{\l_r}(Y_i ,Y_j) Y_j,Y_i\rangle_{\l_r}=
\frac{h^2}{h^4}\langle R_{q_r}(X_i , X_j) X_j, X_i\rangle_{q_r}
\end{eqnarray*}
so formulas (\ref{formula: sec curv in X_i, X_j}),
(\ref{form: curv of warped prod}) imply  
\begin{eqnarray}\label{form: lambd-curv of ij-plane}
& \langle R_{\l}(Y_i ,Y_j) Y_j,Y_i\rangle_{\l}=
-\frac{1}{4h^2} - \frac{3}{h^2}c_{ij}^2- 
3c_{ij}^2\frac{v^2}{4h^4}-\left(\frac{h^\prime}{h}\right)^2.
\end{eqnarray}
and also by (\ref{form: curv of warped prod})
\begin{eqnarray}\label{form: lambd-curv of ir-plane}
& \langle R_\l(Y_i,\ddr)\ddr ), Y_i \rangle_\l=
-\frac{h^{\prime\prime}}{h},
\ \ \ \ \ 
\langle R_\l(Y_1,\ddr)\ddr ), Y_1 \rangle_\l=
-\frac{v^{\prime\prime}}{v}.
\end{eqnarray}

\begin{rmk} A wary reader may wish
to play with trigonometric identities to verify the formulas 
(\ref{form: lambd-curv of 1i-plane}),
(\ref{form: lambd-curv of ij-plane}), 
(\ref{form: lambd-curv of ir-plane})
in the complex hyperbolic case, where for $i>j\ge 1$
the planes spanned by $\{Y_i,\ddr\}$,   or by $\{Y_i, Y_j\}$ with $c_{ij}=0$
are always totally real, and hence 
their sectional curvature is $-\frac{1}{4}$, while
$\{Y_1,\ddr\}$, or $\{Y_i, Y_j\}$ with $c_{ij}=\pm\frac{1}{2}$
span complex geodesics of curvature $-1$.
\end{rmk}

\section{Mixed components of the curvature tensor}
\label{sec: mixed}

As we show in Section~\ref{sec: sectional curv}, in order 
to compute the sectional curvatures of $\l$, one only
needs to know the components of $R_\l$ involving
$\ddr, Y_1, Y_2, Y_3$. In this section we show that 
all the mixed components involving
$\ddr, Y_1, Y_2, Y_3$ vanish except for those
listed (up to symmetries of the curvature tensor)
in (\ref{form: mixed1}), (\ref{form: mixed2}) below.
A component is called {\it mixed\  \!} if it involves $>2$
distinct basis vectors.

First, note that by Appendix~\ref{sec: components-of-curv-tensor} we have
$\langle R_g(Y_i,\ddr)\ddr ), Y_j \rangle=0$ if $i\neq j$, and
since $[Y_i, Y_j]=c_{ij}\frac{v}{h^2}Y_1$ and 
$c_{i1}=0=c_{1j}$,  the only terms of the form 
$\langle R_{\l}(\ddr ,Y_i) Y_j,Y_k\rangle_\l$ 
that could be nonzero at $z$ are 
as follows (up to symmetries of the curvature tensor):
\begin{eqnarray}
& \label{form: mixed1}
\langle R_{\l}(\ddr ,Y_1) Y_i,Y_j\rangle_\l=
\langle [Y_j,Y_i],Y_1\rangle_\l
\left(\ln\frac{v}{h}\right)^\prime =
-c_{ij}\frac{v}{h^2}\left(\ln\frac{v}{h}\right)^\prime ,\\
& \label{form: mixed2}
2\langle R_\l(\ddr ,Y_i) Y_j,Y_1\rangle_\l=
\langle [Y_i,Y_j],Y_1\rangle_\l 
\left(\ln\frac{v}{h}\right)^\prime =
c_{ij}\frac{v}{h^2}\left(\ln\frac{v}{h}\right)^\prime, 
\end{eqnarray}
where $i,j>1$ and $i\neq j$.
The remaining mixed components involving
$Y_1, Y_2, Y_3$ vanish by the following.
\begin{lem} \label{lem: zero-mixed-curv} 
$\langle R_{\l} (Y_i,Y_j)Y_j,Y_k\rangle =0$
if $i,j,k$ are distinct and $1\in\{i,j,k\}$.
\end{lem}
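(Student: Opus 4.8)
The plan is to compute the remaining mixed components directly, using the O'Neill-type reduction already established. Recall from Appendix~\ref{sec: components-of-curv-tensor} that the curvature components of $\l_{v,h}=dr^2+\l_{v,h,r}$ that involve only vectors tangent to the level set $F(r)$ — namely $Y_1,\dots,Y_{2n-1}$ — differ from those of the submersion metric $\l_{v,h,r}$ only by terms coming from the second fundamental form of $F(r)$, which in the basis $\{\ddr,Y_1,\dots,Y_{2n-1}\}$ is diagonal (with entries $v'/v$ on $Y_1$ and $h'/h$ on the $Y_i$, $i>1$). Concretely, for the component $\langle R_\l(Y_i,Y_j)Y_j,Y_k\rangle_\l$ with $i,j,k$ distinct, the Gauss equation contributes $\langle R_{\l_r}(Y_i,Y_j)Y_j,Y_k\rangle_{\l_r}$ plus a sum of products of second-fundamental-form entries; since the shape operator is diagonal in our frame and $i,j,k$ are distinct, all those extrinsic products vanish. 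So it suffices to show $\langle R_{\l_r}(Y_i,Y_j)Y_j,Y_k\rangle_{\l_r}=0$ under the hypotheses of the lemma.

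Next I would reduce from $\l_{v,h,r}$ to the rescaled submersion metric $q_r=\frac1{h^2}\l_{v,h,r}$ used in Section~\ref{sec: A and T}: since the $(4,0)$-curvature tensor scales like the metric, the vanishing of $\langle R_{\l_r}(Y_i,Y_j)Y_j,Y_k\rangle_{\l_r}$ is equivalent to the vanishing of the corresponding component of $R_{q_r}$ in the $q_r$-orthonormal frame $\{X_i\}$ (up to the obvious rescaling of $X_1$). Then I apply O'Neill's formulas for the curvature of a Riemannian submersion with totally geodesic fibers ($T=0$), as recorded in~\cite[Theorem 9.28]{Bes}. There are two cases according to whether the repeated index $j$ equals $1$ or is $>1$. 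In each case the relevant O'Neill term is a combination of the base curvature $\langle R_{\chm}(\cdot,\cdot)\cdot,\cdot\rangle$ pulled back — which vanishes because on the base $\chm$ the frame $\{\check X_i\}$ diagonalizes the curvature operator by formula~(\ref{formula: mixed chn without dr}) / Lemma~\ref{lem: sec curv on base} for distinct horizontal indices — together with $A$-tensor terms of the shape $\langle A_{X_a}X_b,A_{X_c}X_d\rangle_{q_r}$. Using $A_{X_i}X_j=\frac{c_{ij}}2 X_1$ for $i,j>1$, $A_{X_i}X_1=-\frac{v^2}{2h^2}\sum_{l>1}c_{il}X_l$, and $A_{X_1}X_1=0$, every such $A$-term in the component with a vertical index present and three distinct indices reduces to $\sum_{l>1}c_{il}c_{kl}$ or $c_{jk}c_{jj}=0$ type sums; the former is the $(i,k)$ off-diagonal entry of $JY\cdot JY$, which vanishes for $i\neq k$ since $\{Y_l\}_{l>1}$ is $J$-orthonormal (equivalently by differentiating~(\ref{formula: sum of c_ij})), and the latter is zero outright.

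The slightly delicate bookkeeping — and the step I expect to be the main obstacle — is organizing the O'Neill expansion so that one genuinely sees every surviving term is of one of these two harmless types; in particular one must use the remark preceding Section~\ref{sec: seccurv of coord planes} (equivalently, the observation in Section~\ref{sec: mixed}) that we never need $\langle R(X_i,X_j)X_k,X_1\rangle$ with $X_i,X_j,X_k$ linearly independent horizontal, because those are exactly the components that O'Neill would express via $(\nabla A)$-terms, which are not so easily controlled in a non-integrable frame. The lemma's hypothesis $1\in\{i,j,k\}$ with a \emph{repeated} index $j$ sidesteps this: either the repeated index is the vertical one (so the component is $\langle R(Y_i,Y_1)Y_1,Y_k\rangle$, handled by the $A_{X_i}X_1$ formula and the off-diagonal vanishing above) or the vertical index appears once in the last slot (so the component is $\langle R(Y_i,Y_j)Y_j,Y_1\rangle$, handled by $A_{X_i}X_j\parallel X_1$ together with base-curvature vanishing). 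Writing this out is a short case check once the reduction to $q_r$ and the catalogue of $A$-tensor values are in hand.
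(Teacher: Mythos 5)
Your reduction from $\l$ to $\l_r$ (the warped-product correction drops out of mixed components) and then to the rescaled submersion metric $q_r$ matches the paper, and your direct computation of the $A$-$A$ term in the case $\langle R_{q_r}(Y_i,Y_1)Y_1,Y_k\rangle$ is correct: $\sum_{l>1}c_{il}c_{kl}=\tfrac14\langle Y_i,Y_k\rangle=0$ for $i\neq k$, since $\{JY_l\}_{l>1}$ is an orthonormal basis of $\cH$. (Even there you silently dropped a term: \cite[Theorem 9.28c]{Bes} also contains $\langle(\nabla_{Y_1}A)_{Y_i}Y_k,Y_1\rangle$, which the paper kills via the last identity of \cite[9.32]{Bes}, using that the fiber is one-dimensional and $T=0$.)

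The genuine gap is the other case, $\langle R_{q_r}(Y_i,Y_j)Y_j,Y_1\rangle$ with $i,j>1$ distinct. This component has three horizontal slots and one vertical slot, so the relevant O'Neill formula is \cite[Theorem 9.28e]{Bes}, which with $T=0$ reduces to $\langle(\nabla_{Y_j}A)_{Y_i}Y_j,Y_1\rangle$: a covariant derivative of $A$, not an $A$-$A$ product and not a pulled-back base curvature (the base curvature enters only the all-horizontal components). Your claim that the repeated index sidesteps the $(\nabla A)$-terms is false --- every component with exactly one vertical entry produces one, whether or not the horizontal vectors are linearly independent --- and your catalogue $A_{X_i}X_j=\tfrac{c_{ij}}2X_1$, $A_{X_i}X_1=-\tfrac{v^2}{2h^2}\sum_{l>1}c_{il}X_l$ says nothing about $\nabla A$; you also cannot differentiate it naively, since the constants $c_{ij}$ are only meaningful at the point $z$ where the frame has good bracket properties. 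The paper closes exactly this gap without computing $\nabla A$: it observes that $q_r$ is the canonical variation (fiber rescaled by a factor $t$) of the complex hyperbolic submersion metric $q^{sc}_r$, so by \cite[Lemma 9.69c]{Bes} the $(\nabla A)$-component of $q_r$ is the $t$-multiple of that of $q^{sc}_r$, which equals $\langle R_{q^{sc}_r}(Y_i,Y_j)Y_j,Y_1\rangle=0$ by (\ref{formula: mixed chn without dr}); the analogous $t^2$-scaling of \cite[Lemma 9.69a]{Bes} is what the paper uses in your first case, where your $A$-$A$ computation happens to be a valid substitute. Without some comparison of this kind (or an honest computation of $\nabla A$), your argument does not establish the vanishing of $\langle R_\l(Y_i,Y_j)Y_j,Y_1\rangle$.
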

\begin{proof}
The idea of the proof is to show that
$\langle R_\l (Y_i,Y_j)Y_j,Y_k\rangle_\l$ is proportional
to $\langle R_k (Y_i,Y_j)Y_j,Y_k\rangle_k$ which is
zero by (\ref{formula: mixed chn without dr}).
By the formula (\ref{form: curv of warped prod}) and the fact
that the curvature tensor scales like the metric we have
\[
\langle R_\l (Y_i,Y_j)Y_j,Y_k\rangle_\l=
\langle R_{\l_r} (Y_i,Y_j)Y_j,Y_k\rangle_{\l_r}=
\frac{1}{h^2}\langle R_{q_r} (Y_i,Y_j)Y_j,Y_k\rangle.
\]
As shown in Section~\ref{sec: A and T},
$q_r$ is a Riemannian submersion metric with 
base $\chm$ and totally geodesic fiber $\frac{v}{h} \S1$.
If $v=\sinh(r)$ and $h=\cosh(\frac{r}{2})$, we denote 
$q_r$ by $q^{sc}_{r}$. Fix an arbitrary $r>0$, and let 
$t$ be the positive number satisfying
\[
\sqrt{t}\frac{\sinh(r)}{\cosh(\frac{r}{2})}=\frac{v}{h},
\] 
so that $q_r$ is obtained from
the Riemannian submersion metric $q_r^{sc}$
by rescaling the fiber by $t$.
The curvature tensors of $q_r$, $q_r^{sc}$
are related by
O'Neill's formulas~\cite[Theorem 9.28ce and Lemma 9.69ac]{Bes}
via $T$ and $A$ tensors of the submersion, 
and as we show below
$q_r$, $q_r^{sc}$ have proportional $ijjk$-components of the curvature
tensor, which finishes the proof. 

It remains to show proportionality of $ijjk$-components. 
The Riemannian submersion metric $q_r$
satisfies $T=0$, as the fibers are totally geodesic.
Also $\langle (\nabla_{Y_1}A)_{Y_j}Y_k,Y_1\rangle_{q_r}=0$ 
for distinct $j,k>1$, as follows e.g. from the last identity 
in~\cite[9.32]{Bes}; in essence this term vanishes because the fiber
is one-dimensional and $T=0$.

So by~\cite[Theorem 9.28c]{Bes}
\[
\langle R_{q_r} (Y_i,Y_1)Y_1,Y_k\rangle_{q_r}=
-\langle A_{Y_i} Y_1, A_{Y_k} Y_1\rangle_{q_r}
\]
where by~\cite[Lemma 9.69a]{Bes} the right hand side is the 
$t^2$-multiple of the same quantity for $q_{sc}$,
which equals to $\langle R_{q_r^{sc}} (Y_i,Y_1)Y_1,Y_k\rangle_{q^{sc}_r}=0$.

Similarly, by~\cite[Theorem 9.28e]{Bes},
\[
\langle R_{q_r} (Y_i,Y_j)Y_j,Y_1\rangle_{q_r}=
\langle (\nabla_{Y_j}A)_{Y_i} Y_j, Y_1\rangle_{q_r}
\]
where by~\cite[Lemma 9.69c]{Bes} the right hand side is the $t$-multiple
of the same quantity for $q_r^{sc}$, which
equals to $\langle R_{q_r^{sc}} (Y_i,Y_j)Y_j,Y_1\rangle_{q^{sc}_r}=0$.
The case when $Y_1$ occupies the first slot, instead of the last, 
follows from the symmetry of the curvature tensor.
\end{proof}

\section{Sectional curvature}
\label{sec: sectional curv}

In this section we compute the sectional curvature of $\l_{v,h}$
in terms of $v,h$. 
Fix an arbitrary $2$-plane $\s$ that is tangent to 
$I\times F$ at the point $z\in \{r\}\times F$.
As in Section~\ref{sec: basis}, we denote the projection 
$I\times F\to\chm$ by $p$, and let $w=p(z)\in\chm$.
  
We first focus on the ``generic'' case when the subspace 
$dp (\s)\subset T_w\chm$ is $2$-dimensional, and treat 
the case of $\dim(dp (\s))<2$ in Remark~\ref{rmk: non-generic sigma}.

To simplify the computation we choose 
a frame $\{Y_i\}$ depending on the position of $\s$.
Since $\{r\}\times F\subset I\times F$
has codimension one, $\s$ contains 
a unit vector $D$ that is tangent to $\{r\}\times F$.
Let $H_2\in dp(\s)$ be a unit vector proportional to 
$dp(D)$, and let $H_3\in dp(\s)$ be a vector such that
$\{H_2, H_3\}$ is an orthonormal basis of $dp(\s)$.
As in Section~\ref{sec: basis}, we extend $\{H_2, H_3\}$
to the frame $\{\check X_2,\dots, \check X_{2n-1}\}$ in $\chm$
satisfying $\check X_2=H_2$, $\check X_3=H_3$ at $w$, and then lift 
each $\check X_i$ to a horizontal vector field $X_i$, 
so that $Y_i=X_i/h$ is the corresponding unit vector field.
Thus $\ddr, Y_1, Y_2,\dots, Y_{2n-1}$ is a local frame near $z$.

Since $D$, $Y_2$ are tangent to $\{r\}\times F$,
and $dp(D)$ is proportional to $dp(Y_2)$, we conclude that
$D$ lies in the span of $Y_1, Y_2$. Let $C\in\s$
be a unit vector which is orthogonal to $D$. By construction
$dp(C)\subset dp(\s)$ lies in the span of $dp(Y_2)$, $dp(Y_3)$,
so $C$ lies in the span of $\ddr$, $Y_1$, $Y_2$, $Y_3$.
Thus $\{C,D\}$ is an orthonormal basis in $\s$ such that
\[
C=c_0\ddr+c_1Y_1+c_2Y_2+c_3Y_3, \quad D=d_1Y_1+d_2Y_2,
\]
for some $c_i, d_j\in\mathbb R$. 

For brevity, in this section
we suppress the subscript $\l$ in the metric and curvature
tensors of $\l$, and also denote by $K$ the sectional curvature
of $\l$. Symmetries of the curvature tensor, 
and Section~\ref{sec: mixed} imply the following.
\begin{eqnarray*}
& K(C , D)=
d_1^2\langle R(C , Y_1) Y_1, C\rangle 
+d_2^2\langle R(C , Y_2) Y_2, C\rangle 
+2d_1d_2\langle R(C , Y_1) Y_2, C\rangle\\
& \langle R(C , Y_1) Y_1, C\rangle=
c_2^2 K (Y_2,Y_1)+c_3^2 K (Y_3,Y_1) + c_0^2 K (\ddr,Y_1),\\
& \langle R(C , Y_2) Y_2, C\rangle=
c_1^2 K(Y_2,Y_1)+c_3^2 K (Y_3,Y_2) + c_0^2 K (\ddr,Y_2),\\
& \langle R(C , Y_1) Y_2, C\rangle=
-c_1c_2 K (Y_2,Y_1)+
\frac{3}{2}c_0c_3 \langle R(\ddr , Y_1) Y_2, Y_3\rangle, 
\end{eqnarray*}
where by Section~\ref{sec: mixed}
all but two mixed terms vanish, and the nonzero mixed
terms add up to
$\frac{3}{2}c_0c_3 \langle R(\ddr , Y_1) Y_2, Y_3\rangle$.
Thus $K(C , D)$ equals to
\begin{eqnarray}
\label{form: k(c,d)}
& (d_1c_2-d_2c_1)^2 K (Y_2,Y_1)+
d_1^2c_3^2 K(Y_3,Y_1)+d_1^2c_0^2 K(\ddr,Y_1)+\\
\nonumber & 
d_2^2c_0^2 K(\ddr,Y_2)+
d_2^2c_3^2 K(Y_3,Y_2)+
3d_1d_2c_0c_3 \langle R(\ddr , Y_1) Y_2, Y_3\rangle.
\end{eqnarray}
It follows from
(\ref{form: lambd-curv of 1i-plane})--(\ref{form: lambd-curv of ir-plane})
and (\ref{form: mixed1})--(\ref{form: mixed2}) that
\begin{eqnarray} 
& \label{form: k(y_i,y_1)}
K (Y_2,Y_1)=K(Y_3,Y_1)=
\frac{v^2}{16h^4}-\frac{v^\prime}{v}\frac{h^\prime}{h},\\
& \label{form: k(y_3,y_2)}
K (Y_3,Y_2)=-\frac{1}{4h^2} - \frac{3}{h^2}c_{23}^2- 
3c_{23}^2\frac{v^2}{4h^4}-\left(\frac{h^\prime}{h}\right)^2,\\
& \label{form: k(dr,y_i)}
K(\ddr,Y_1)=-\frac{v^{\prime\prime}}{v}, \ \ \ \ \
K(\ddr,Y_2)=-\frac{h^{\prime\prime}}{h},\\
& \label{form: r(dr,y_1, y_2, y_3)}
\langle R(\ddr , Y_1) Y_2, Y_3\rangle=
-c_{23}\frac{v}{h^2}\left(\ln\frac{v}{h}\right)^\prime=
-c_{23}\frac{v}{h^2}\left(\frac{v^\prime}{v}-
\frac{h^\prime}{h}\right).
\end{eqnarray}
where $|c_{23}|\le\frac{1}{2}$
by (\ref{formula: bound on c_ij}).

\begin{rmk}\label{rmk: lin alg}
Since $C, D$ are orthonormal, 
$d_1c_1+d_2c_2=0$ so 
\begin{eqnarray*}
(d_1c_2-d_2c_1)^2=(d_1c_2-d_2c_1)^2+(d_1c_1+d_2c_2)^2=
(d_1^2+d_2^2)(c_1^2+c_2^2)=c_1^2+c_2^2.
\end{eqnarray*}
In particular, if the mixed term vanishes and
the sectional curvatures of coordinate planes are bounded
above by a negative constant, then $K(\s)$ is bounded
above by the same constant as the coefficients add up to $1$:
\[
c_1^2+c_2^2+d_1^2c_3^2+d_1^2c_0^2+d_2^2c_3^2+d_2^2c_0^2
=c_1^2+c_2^2+(d_1^2+d_2^2)(c_0^2+c_3^2)
=1.
\]
\end{rmk}

\begin{rmk}\label{rmk: non-generic sigma}
If $dp(\s)$ is zero-dimensional, then $\s$  
is the $Y_1\ddr$-plane, whose sectional curvature
is given by (\ref{form: k(dr,y_i)}).
If $dp(\s)$ is one-dimensional, then $\s$ intersects the
$Y_1\ddr$-plane in a line, and we let $D$ be a unit
vector that spans the line, so $D=d_0\ddr+d_1Y_1$
with $d_0, d_1\in\mathbb R$. Let $C$ be a unit vector in $\s$
that is orthogonal to $D$. Then $dp(\s)$ is a nonzero subspace
spanned by $dp(C)$, and we let $H_2$ be a unit vector that 
is proportional to $dp(C)$. Extending $H_2$ to a frame 
$\{\check X_2,\dots, \check X_{2n-1}\}$ in $\chm$
satisfying $\check X_2=H_2$, we get a frame $\ddr, Y_1,\dots, Y_{2n-1}$
near $z$ in which $C=c_0\ddr+c_1Y_1+c_2Y_2$
with $c_0, c_1, c_2\in\mathbb R$. Repeating the above arguments, we
easily compute $K(C,D)$, and in fact all the mixed terms now
vanish so that
\begin{eqnarray}
\label{form: k(c,d) non-generic}
&\ \  \qquad K(C , D)=
(d_0c_1-d_1c_0)^2 K (\ddr,Y_1)+
d_0^2c_2^2 K(\ddr,Y_2)+d_1^2c_2^2 K(Y_1,Y_2),
\end{eqnarray}
where again $d_0c_0+d_1c_1=0$ implies that $(d_0c_1-d_1c_0)^2=c_1^2+c_2^2$,
so that if the sectional curvatures of coordinate planes are bounded
above by a negative constant, then $K(\s)$ is bounded
above by the same constant.
\end{rmk}

\section{Construction of the metric and curvature estimates}
\label{sec: curv estimate}

In this section we construct the functions $v, h$ such that
the metric $\l_{v,h}$ is complete, 
$\sec(\l_{v,h})$ is bounded above by a negative number, and
$\l_{v,h}$ agrees with the complex hyperbolic metric, i.e. 
$v=\sinh(r)$, $h=\cosh(r/2)$, when $r$ is at least half of
the normal injectivity radius of $S$. The domain of $v, h$
will be the interval from $-\infty$ to the normal 
injectivity radius of $S$.

Let $\e$ be a small positive parameter
such that $8\e$ is less than the normal
injectivity radius of $S$ in $M$. When precise estimates
are unimportant we use the ``big $O$'' notation, 
and rely on smallness of $\e$ without further mention.

{\bf Defining $v$ by bending $\sinh(r)$ to $\e e^r$.}   
Let $r_\e$ be the unique solution of the equation
$\sinh(r)=\e e^r$; thus $-2r_\e=\ln(1-2\e)$
so that $r_\e=\e+ O(\e^2)\approx \e$. Let $r_\e^-:=r_\e-\e^4$.

\begin{prop}
\label{prop: properties of bv}
There is a $C^1$ function $\bv$ and 
$r_\e^+\in (r_\e, r_\e+\e^4]$ such that\newline
\textup{(1)}
$\bv$ is positive and increasing,\newline
\textup{(2)}
$\bv(r)=\sinh(r)$ for $r\ge r_\e^+$, \newline
\textup{(3)}
$\bv(r)=\e e^r$ for $r\le r_\e^-$, \newline
\textup{(4)}
if $r\in [r_\e^-, r_\e^+]$, 
then $\bv$ is $C^\infty$, 
$\bv^{\prime\prime}(r)>\bv(r)$, and $(\ln(\bv))^{\prime\prime}>0$.

\end{prop}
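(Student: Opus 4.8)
The plan is to construct $\bv$ explicitly by convolving a carefully chosen piecewise-smooth $C^1$ function with a mollifier, then to verify the four properties, with most of the effort going into property (4). First I would set up the model function $\bg(r)=\max\{\sinh(r),\e e^r\}$ on a neighborhood of $r_\e$; this is continuous, strictly increasing, and piecewise smooth, equal to $\e e^r$ for $r\le r_\e$ and to $\sinh(r)$ for $r\ge r_\e$. Since both branches satisfy $g''=g$ on their respective pieces, $\bg$ is a (continuous) solution of $g''\geq g$ in the distributional sense, and in fact the distributional second derivative has a positive jump at $r_\e$ (the slope of $\sinh$ exceeds the slope of $\e e^r$ there because $\cosh(r_\e) > \e e^{r_\e}$), so $\bg$ is strictly convex near $r_\e$. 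The function $\ln\bg$ is likewise the max of $\ln\sinh(r)$ and $r+\ln\e$, hence convex near $r_\e$, and again strictly so across the corner.

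Next I would smooth $\bg$ on the interval $[r_\e^-, r_\e^+]$, where $r_\e^- = r_\e-\e^4$ and $r_\e^+$ is to be chosen in $(r_\e, r_\e+\e^4]$. The natural device is to invoke Appendix~\ref{sec: appendix smooth convex}: convolving a convex function with a nonnegative bump supported in a tiny interval preserves (weak) convexity and yields a $C^\infty$ function that agrees with the original outside a slightly enlarged interval, while the strict convexity across the corner (the positive jump in $g''$) gets smeared into a genuine strictly positive second derivative on a subinterval. Applying the smoothing with a bump of width $\ll \e^4$ centered appropriately, I get a $C^\infty$ function $\bv$ on $[r_\e^-, r_\e^+]$ that still equals $\e e^r$ near $r_\e^-$ and $\sinh(r)$ near $r_\e^+$ for suitable $r_\e^+ \in (r_\e, r_\e+\e^4]$; extending by $\e e^r$ on $(-\infty, r_\e^-]$ and by $\sinh(r)$ on $[r_\e^+, \infty)$ gives a globally $C^1$ (indeed $C^\infty$ away from the two matching points) function, which yields (2), (3), and the $C^\infty$ clause of (4). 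Positivity and monotonicity in (1) follow because the smoothing of a positive increasing function stays positive and increasing (the convolution integral of a positive increasing function is positive increasing), giving $\bv>0$, $\bv'>0$ everywhere.

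The substance of property (4) is the two inequalities $\bv'' > \bv$ and $(\ln\bv)'' > 0$ on all of $[r_\e^-, r_\e^+]$. For $(\ln\bv)''>0$: I would apply the smoothing argument directly to $\ln\bg = \max\{\ln\sinh r,\, r+\ln\e\}$, which is convex with a strict corner at $r_\e$, to conclude $(\ln\bv)''>0$ on the whole smoothing interval — this is exactly the kind of output Appendix~\ref{sec: appendix smooth convex} is designed to produce, provided the width of the mollifier is chosen smaller than $\e^4$ so the smeared corner fills the interval. For $\bv'' > \bv$: on the parts of $[r_\e^-, r_\e^+]$ where $\bv$ has not yet been modified, $\bv'' = \bv$ exactly on one side and one has to check the convolution does not destroy the inequality; the point is that $\bv'' = \bv$ strictly fails to be sharp only in the sense that convexity of $\ln\bv$ gives $\bv'' > (\bv')^2/\bv$, and on the smoothing interval $\bv' > \bv$ (since $\bv' \geq \e e^r > \sinh r \geq \bv$ fails — I need to be careful here) — more robustly, since $\bv$ interpolates between two functions each satisfying $g'' = g$, and the interpolation is a convex combination-like average via convolution, $\bv''$ is a corresponding average of the two second derivatives plus a nonnegative correction from the corner, so $\bv'' \geq \bv$ with the corner contributing the strict inequality. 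The main obstacle I anticipate is precisely this bookkeeping: making the widths ($\e^4$, the mollifier support, the choice of $r_\e^+$) fit together so that the single convolution simultaneously delivers strict positivity of $(\ln\bv)''$ and the strict inequality $\bv'' > \bv$ across the entire interval, rather than only on a proper subinterval; handling the two short transition zones near $r_\e^\pm$ (where $\bv$ has just detached from one model) without losing either inequality is the delicate part, and is where Appendix~\ref{sec: appendix smooth convex} must be invoked with care.
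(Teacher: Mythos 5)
There is a genuine gap, and it is exactly at the point you flag as "bookkeeping": it is not bookkeeping but a fundamental obstruction to mollifying $\max\{\sinh(r),\e e^r\}$ (or its logarithm) directly. Property (4) demands $(\ln\bv)''>0$ and $\bv''>\bv$ on the \emph{entire} closed interval $[r_\e^-,r_\e^+]$, with $r_\e^+>r_\e$. If you convolve with a bump of width $\delta$, then for the result to equal $\sinh(r)$ for $r\ge r_\e^+$ you need $\delta<r_\e^+-r_\e$, and on $[r_\e+\delta,\,r_\e^+]$ the smoothed function is just the mollification of the $\sinh$ branch alone. Mollification preserves strict log-concavity, and $(\ln\sinh)''=-1/\sinh^2<0$, so on that subinterval $(\ln\bv)''<0$; likewise there $\bv''=\bv$ (the distributional identity $\bg''=\bg+c\,\delta_{r_\e}$ shows the convolution gives $\bv''=\bv+c\,\theta_\delta(r-r_\e)$, which is strict only within $\delta$ of the corner). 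The same computation shows $(\ln\bv)''=0$, not $>0$, on the left part where you see only the $\e e^r$ branch. No choice of widths fixes this, and your fallback inequality $\bv''>(\bv')^2/\bv$ is never converted into $\bv''>\bv$ because you do not establish $\bv'/\bv\ge 1$ on the interval (you noticed yourself that the attempted comparison $\bv'>\bv$ breaks down).

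The paper's proof supplies precisely the missing idea: work on the log scale and \emph{replace $\ln\sinh$ by its tangent line} $l_\e^+$ at $r_\e^+$. Because $\ln\sinh$ is strictly concave with slope $\coth(r_\e)\gg1$ at $r_\e$, the line $l_\e^+$ meets $l_\e^-(r)=r+\ln\e$ at a point $r_\e^0\in(r_\e^-,r_\e)$, so $l:=\max\{l_\e^-,l_\e^+\}$ is a convex piecewise-linear function on $[r_\e^-,r_\e^+]$ whose endpoint values and slopes match $r+\ln\e$ and $\ln\sinh$ respectively. Smoothing this corner by Proposition~\ref{prop: bend} (small quadratic perturbations plus convolution, so that the endpoint values and derivatives are preserved) yields $w_l$ with $w_l''>0$ on the whole closed interval; setting $\bv=e^{w_l}$ there gives $(\ln\bv)''>0$ everywhere on $[r_\e^-,r_\e^+]$, and then $\bv''/\bv>(\bv'/\bv)^2\ge1$ because $\bv'/\bv=w_l'$ is increasing and equals $1$ at $r_\e^-$ (computed from $\e e^r$). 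The tangent-line bending is what concentrates all of the needed convexity into a genuine corner between two straight lines, which is the situation the appendix smoothing lemmas can handle while keeping the strict inequalities up to the endpoints; your construction, which keeps the curved $\sinh$ branch inside the bending interval, cannot.
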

\begin{proof}
The slope of $\ln(\sinh(r))$ at $r_\e$ is $\coth(r_\e)\gg 1$, 
so the graphs of $\ln(\sinh(r))$ 
and $r+\ln(\e)$ intersects transversely at $r_\e$.

Since $\ln(\sinh(r))^{\prime\prime}=-\frac{1}{\sinh^2(r)}<0$, 
the function $\ln(\sinh(r))$ is (strictly) concave, so 
given $r_\e^+\in (r_\e, r_\e+\e^4)$ the tangent line
$l_\e^+$ to $\ln(\sinh(r))$ at 
$r_\e^+$ intersects the line $l_\e^-(r)=r+\ln(\e)$ 
at $r_\e^0<r_\e$.
(This becomes obvious after drawing graphs of
$\ln(\sinh(r))$, $l_\e^-$ near $r_\e$. Alternatively,
the lines $l_\e^+$, $l_\e^-$ intersect transversely,
and they cannot intersect at a point $r\ge r_\e$ because 
$r\ge r_\e$ implies $l_\e^+(r)\ge \ln(\sinh(r))\ge l_\e^-(r)$, 
where the first inequality follows from concavity
of $\ln(\sinh(r))$, and the inequalities become
equalities at different points
$r_\e^+$, $r_\e$).

Since $r_\e^0\to r_\e$ as $r_\e^+\to r_\e$, we may assume that 
$r_\e^0\in (r_\e^-, r_\e)$. 
Note that $r_\e^0$ is the only nonsmooth point of the
piecewise-linear function $l(r):=\max\{l^-(r)$, $l^+(r)\}$.
The slope of $l^-$ is $1$, and the slope of $l^+$ is $\coth(r_\e^+)>1$,
so $l$ is convex. 
Restricting $l$ to $[r_\e^-, r_\e^+]$, we
let $w_l$ be the smoothing of $l$ given by 
Proposition~\ref{prop: bend} for some small $\delta$.
Thus $w_l$ is a $C^\infty$ increasing function defined on
$[r_\e^-, r_\e^+]$ and such that
$w_l^{\prime\prime}>0$, and the graphs of $l$, $w_l$ touch
at the points $r_\e^-$, $r_\e^+$. 

Let $w$ be the function
equal to $r+\ln(\e)$ for $r\le r_\e^-$, equal to $w_l$ for
$r\in [r_\e^-, r_\e^+]$, and equal to $\ln(\sinh(r))$ for $r\ge r_\e^+$.
Then $w$ is an increasing $C^1$ function, and the function 
$\bv:=e^w$ is positive, increasing, and $C^1$, and furthermore,
the restrictions of $\bv$ to $(-\infty, r_\e^-]$, $[r_\e^-, r_\e^+]$,
$[r_\e^+, \infty)$ are $C^\infty$.

Finally, assume $r\in [r_\e^-, r_\e^+]$, and consider the function $e^{w_l}$, 
i.e. the restriction of $\bv$ to $[r_\e^-, r_\e^+]$. 
Certainly, $(\ln(\bv))^{\prime\prime}=w_l^{\prime\prime}>0$.
Since $\frac{\bv^\prime}{\bv}=w_l^\prime$ is increasing, 
$0<(\frac{\bv^\prime}{\bv})^\prime=
\frac{\bv^{\prime\prime}}{\bv} - (\frac{\bv^\prime}{\bv})^2$.
Hence $\frac{\bv^{\prime\prime}}{\bv} >(\frac{\bv^\prime}{\bv})^2\ge 1$,
where the last inequality holds because $\frac{\bv^\prime}{\bv}$
is bounded below by its value at $r_\e^-$ which is equal to $1$,
because it can be computed using $\bv=\e e^r$.
\end{proof}

\begin{prop}\label{prop: defining v} 
For each small positive $\e$ there exists $\delta_0>0$, and a $C^\infty$ 
function $v=v(r)$ depending on the parameter $\delta\in (0,\delta_0)$ such that
\begin{itemize}
\item $v$ is positive and increasing,
\item $v(r)=\bv(r)$ if $r$ is outside the $\e^8$-neighborhood
of  $\{r_\e^-, r_\e^+\}$,  
\item 
if $r$ is in the $\e^8$-neighborhood of $\{r_\e^-, r_\e^+\}$,
then $\frac{v^{\prime\prime}}{v}> 1+O(\e)$,
\item 
if $\e$ is fixed, then
$v$ converges to $\bv$ in uniform $C^1$ topology as $\delta\to 0$.
\end{itemize}
\end{prop}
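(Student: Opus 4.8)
The plan is to modify $\bv$ only on the $\e^8$-neighborhoods $U^-$, $U^+$ of the two points $r_\e^-$, $r_\e^+$ at which $\bv$ is merely $C^1$, replacing it there by a suitable mollification and keeping $v=\bv$ elsewhere. The engine of the argument is the linear differential inequality
\[
\bv^{\prime\prime}-\bv\ \ge\ 0\qquad\text{a.e.},
\]
which holds with equality on $(-\infty,r_\e^-]$ and on $[r_\e^+,\infty)$, where $\bv=\e e^r$ and $\bv=\sinh(r)$, and strictly on $[r_\e^-,r_\e^+]$ by Proposition~\ref{prop: properties of bv}(4). Since the operator $\tfrac{d^2}{dr^2}-1$ commutes with convolution, this inequality is preserved by mollifying $\bv$, which is what will ultimately give $v^{\prime\prime}/v\ge 1$ on the part of $U^-\cup U^+$ where $v$ is a genuine mollification of $\bv$. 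Note also that $r_\e^+-r_\e^->\e^4\gg 2\e^8$, so $U^-$ and $U^+$ are disjoint, $U^-\subset(-\infty,r_\e^+)$ and $U^+\subset(r_\e^-,\infty)$; hence $\bv^{\prime\prime}-\bv\ge 0$ a.e.\ on $U^-\cup U^+$, and $\bv,\bv^\prime$ are both $\gtrsim\e$ there (as one checks on each of the three pieces).

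Concretely I would fix a nonnegative mollifier $\phi$ supported in $(-1,1)$ with $\int\phi=1$, set $\phi_\delta(s)=\delta^{-1}\phi(s/\delta)$, pick a smooth cutoff $\chi$ equal to $1$ on the $(\e^8/2)$-neighborhood of $\{r_\e^-,r_\e^+\}$ and supported in $U^-\cup U^+$, and put
\[
v:=\bv+\chi\cdot\bigl(\bv\ast\phi_\delta-\bv\bigr).
\]
Then $v$ coincides with $\bv\ast\phi_\delta$ wherever $\chi\equiv 1$ — in particular near $r_\e^-$ and $r_\e^+$, so $v$ is $C^\infty$ — coincides with $\bv$ off $U^-\cup U^+$, and depends smoothly on $\delta$. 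For $\delta<\e^8/2$, on the core region $\{\chi\equiv 1\}$ every interval $(r-\delta,r+\delta)$ involved lies inside $U^-$ or inside $U^+$, so $v^{\prime\prime}-v=(\bv^{\prime\prime}-\bv)\ast\phi_\delta\ge 0$ and hence $v^{\prime\prime}/v\ge 1$ there. On the two transition annuli (the rest of $U^-\cup U^+$, where $0<\chi<1$) the function $\bv$ is $C^\infty$ on a $\delta$-thickening of the annulus with $C^3$-norm bounded in terms of $\e$ only, so $\|\bv\ast\phi_\delta-\bv\|_{C^2}=O_\e(\delta)$; together with $\|\chi\|_{C^2}=O(\e^{-16})$ this gives $\|v-\bv\|_{C^2}=O_\e(\delta)$. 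In particular $v\to\bv$ in $C^1$ as $\delta\to0$, and for $\delta$ below a threshold $\delta_0=\delta_0(\e)$ one keeps $v>0$ and $v^\prime>0$ (using $\bv,\bv^\prime\gtrsim\e$ on $U^-\cup U^+$), so $v$ is positive and increasing. Finally, on the transition annuli $v^{\prime\prime}=\bv^{\prime\prime}+O_\e(\delta)$, $v=\bv+O_\e(\delta)$ and $\bv^{\prime\prime}\ge\bv\gtrsim\e$, whence $v^{\prime\prime}/v\ge 1-O_\e(\delta)/\e>1-\e$ after shrinking $\delta_0$; combined with the core estimate this gives $v^{\prime\prime}/v>1+O(\e)$ on all of $U^-\cup U^+$, the remaining assertion.

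All the mollification bookkeeping above is exactly what the smoothing lemmas of Appendix~\ref{sec: appendix smooth convex} are designed to package, and in writing the proof I would simply invoke them, applied near $r_\e^-$ and $r_\e^+$ separately. The one genuinely delicate point, which I expect to be the main obstacle, is the gluing: $\bv\ast\phi_\delta$ does not agree with $\bv$ even where $\bv$ is smooth, so the interpolation via $\chi$ cannot be avoided, and one must balance the resulting $\chi^{\prime\prime}$-error (of size $\sim\e^{-16}\delta$) against the fact that $\bv\sim\e$ is tiny near the two corners. This trade-off is precisely what downgrades the clean estimate $v^{\prime\prime}/v\ge 1$ on the core to $v^{\prime\prime}/v>1+O(\e)$ on all of $U^-\cup U^+$, and it is the reason $\delta_0$ must be allowed to depend on $\e$.
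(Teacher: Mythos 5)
Your proof is correct, and its overall architecture is the same as the paper's: mollify $\bv$ near the two non-smooth points $r_\e^\pm$ and glue back to $\bv$ with a cutoff supported in the $\e^8$-neighborhoods (the paper's Lemma~\ref{lem: smoothing conv} is literally of the form $f+\phi_\s\,(f_{\theta_\delta\theta_\delta}-f)$, i.e.\ your $\bv+\chi(\bv\ast\phi_\delta-\bv)$), with positivity, monotonicity and the $C^1$-convergence all coming from $C^1$-closeness for small $\delta$. Where you genuinely diverge is in how the second-derivative bound is transported through the smoothing. The paper applies Lemma~\ref{lem: smoothing conv} with the \emph{constant} $k=\bv(r_\e^--2\s)$ (the lemma preserves $f''>k$ via strict convexity of $f-k(x-c)^2/2$ and Lemma~\ref{lem: 2nd der is >0}), and then gets $\frac{v''}{v}>\frac{\bv(r_\e^--2\s)}{\bv(r_\e^++2\s)}=1+O(\e)$ by dividing a constant lower bound on $v''$ by an upper bound on $v$; you instead preserve the \emph{linear differential inequality} $\bv''-\bv\ge 0$ itself, using that $\tfrac{d^2}{dr^2}-1$ commutes with convolution (legitimate here since $\bv$ is $C^1$ and piecewise smooth, so its distributional second derivative has no singular part), which yields the cleaner bound $v''/v\ge 1$ on the core, and you handle the transition annuli by an explicit $C^2$-perturbation estimate balancing $\|\chi\|_{C^2}\sim\e^{-16}$ against $\|\bv\ast\phi_\delta-\bv\|_{C^2}=O_\e(\delta)$ and $\bv\gtrsim\e$. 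The trade-off: the paper's route reuses one packaged appendix lemma (which it needs anyway for the smoothings of $\bh$ and $\bg$) and hides the cutoff bookkeeping inside the lemma's qualitative $C^2$-convergence away from the corner, at the cost of the slightly lossier endpoint-ratio estimate; your route gives the sharp core inequality directly but requires you to redo the annulus bookkeeping by hand and to note the distributional identity at the corner. Both arguments are sound and yield the stated conclusion with $\delta_0=\delta_0(\e)$.
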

\begin{proof}
We define $v:=\bv_{\delta, \s}$ to be the smoothing 
of $\bv$ at $r_\e^-$, $r_\e^+$, given by Lemma~\ref{lem: smoothing conv}.
In particular, $v$ is positive and increasing,
$v=\bv$ outside the $\s$-neighborhood of 
$\{r_\e^-, r_\e^+\}$,
and $v$ converges to $\bv$ uniformly in $C^1$ topology as $\delta\to 0$.
If $r$ is in the $\s$-neighborhood of $[r_\e^-, r_\e^+]$, then 
$\bv(r)<\bv(r_\e^++2\s)$, so if 
$\delta$ is small enough, then
$v(r)<\bv(r_\e^++2\s)$.

By Proposition~\ref{prop: properties of bv}, 
if $r\in [r_\e^-, r_\e^+]$ 
then $\bv^{\prime\prime}(r)>\bv(r)> \bv(r_\e^--2\s)$,
where by $\bv^{\prime\prime}(r)$ at $r_\e^-, r_\e^+$,
we mean one-sided derivatives. 
If $r\in [r_\e^--\s, r_\e^++\s]\setminus (r_\e^-, r_\e^+)$,  
then $\bv$ equals to $\e e^r$ or $\sinh(r)$,
so $\bv^{\prime\prime}(r)=\bv(r)>\bv(r_\e^--2\s)$.
Therefore,
Lemma~\ref{lem: smoothing conv} implies that
$v^{\prime\prime}>\bv(r_\e^--2\s)$. 

Therefore, for small $\delta$ and $\s=\e^8$
\[
\frac{v^{\prime\prime}}{v}>
\frac{\bv(r_\e^--2\s)}{\bv(r_\e^-+2\s)}=
\frac{\e e^{r_\e^--2\s}}{\sinh(r_\e^-+2\s)}=1+O(\e).
 \]
provided $r$ lies in the $\e^8$-neighborhood of $\{r_\e^-, r_\e^+\}$.
\end{proof}

{\bf Defining $h$ by bending from $\cosh(r/2)$ to $e^{r/2}$.} 
Let $\r_\e=\frac{r_\e^-}{2}$ so that $\r_\e<r_\e^-=r_\e-\e^4\approx\e$, and
$\r_\e=\frac{\e}{2}+O(\e^2)\approx\frac{\e}{2}$.
The tangent line to the graph of $\cosh(\frac{r}{2})$ at $\r_\e$ is 
\begin{eqnarray}\label{form: l=tang line}
l(r)=\cosh\left(\frac{\r_\e}{2}\right)+
\frac{1}{2}\sinh\left(\frac{\r_\e}{2}\right)(r- \r_\e).
\end{eqnarray}
Let $q(r):=l(r)+\e^6(r- \r_\e)^2$, so that the graphs of $q$ and $l$ touch
at $\r_\e$.

\begin{prop}
\label{prop: properties of bh}
There is a $C^1$ function $\bh$ and $n_\e<m_\e<\r_\e$ such that\newline
\textup{(1)} $\bh$ is positive and increasing,\newline
\textup{(2)} $\bh(r)=\cosh(\frac{r}{2})$ for $r\ge \r_\e$, \newline
\textup{(3)} $\bh(r)=q(r)$ for $r\in [m_\e, \r_\e]$, \newline
\textup{(4)} if $r\in [n_\e, m_\e]$, then $\bh$ is $C^\infty$, 
$\bh^{\prime\prime}(r)>\bh(r)/4$, and 
$(\ln(\bh))^{\prime\prime}>0$, and
$\frac{\bh^\prime}{\bh}\in [\frac{1}{2}, \frac{3}{4}]$.\newline
\textup{(5)} if $r\le  n_\e$, then $\bh(r)=e^{r/2}$. 
\end{prop}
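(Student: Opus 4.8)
The plan is to build $\bh$ on four consecutive intervals, working from $r=\r_\e$ leftward, and to calibrate the two inner breakpoints $m_\e$ and $n_\e$ so that consecutive pieces match to first order. On $[m_\e,\r_\e]$ I set $\bh:=q$. Since $q(\r_\e)=l(\r_\e)=\cosh(\r_\e/2)$ and $q'(\r_\e)=l'(\r_\e)=\tfrac12\sinh(\r_\e/2)$, which is the derivative of $\cosh(r/2)$ at $\r_\e$, this matches $\cosh(r/2)$ to first order at $\r_\e$, giving (2) and a $C^1$ junction there. The line $l$ is increasing with the tiny slope $\tfrac12\sinh(\r_\e/2)=O(\e)$ and vanishes at the point $r_0$ where $r_0-\r_\e=-2\coth(\r_\e/2)\approx -8/\e$; since $\e^6(r-\r_\e)^2\ge 0$, this gives $q>0$ on $(r_0,\infty)$. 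Next I examine $(\ln q)'=q'/q$ on $(r_0,\r_\e)$: at $\r_\e$ it equals $\tfrac12\tanh(\r_\e/2)=O(\e)$, hence is $<\tfrac12$, whereas as $r\downarrow r_0$ the numerator $q'$ stays near $\tfrac12\sinh(\r_\e/2)>0$ while the denominator $q$ shrinks to $\e^6(r_0-\r_\e)^2>0$, so $q'/q\to\infty$. By the intermediate value theorem there is $m_\e\in(r_0,\r_\e)$ with $q'(m_\e)/q(m_\e)=\tfrac35$; I fix one such $m_\e$ and set $\beta:=\tfrac35\in(\tfrac12,\tfrac34)$. As $q''=2\e^6>0$, $q'$ is increasing, so $q'\ge q'(m_\e)=\beta\,q(m_\e)>0$ and $q\ge q(m_\e)>0$ on $[m_\e,\r_\e]$, which is the content of (1) there.

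For $r\le n_\e$ I set $\bh(r):=e^{r/2}$; a $C^1$ junction at $n_\e$ forces $\bh(n_\e)=e^{n_\e/2}$ and $(\ln\bh)'(n_\e)=\tfrac12$. On $[n_\e,m_\e]$ I take $\bh:=e^u$ with $u':=\tfrac12+\tfrac1{10}\cdot\frac{r-n_\e}{m_\e-n_\e}$ (so $u'$ is affine, $u'(n_\e)=\tfrac12$, $u'(m_\e)=\beta$) and $u(r):=\tfrac{n_\e}{2}+\int_{n_\e}^{r}u'$. Then $u'\in[\tfrac12,\tfrac35]\subset[\tfrac12,\tfrac34]$, $u''=\tfrac1{10(m_\e-n_\e)}>0$, $u(n_\e)=\tfrac{n_\e}{2}$ automatically, and the remaining level condition $u(m_\e)=\ln q(m_\e)$ reads $\tfrac{n_\e}{2}+\tfrac{11}{20}(m_\e-n_\e)=\ln q(m_\e)$, i.e. $n_\e=11\,m_\e-20\ln q(m_\e)$, which I take as the definition of $n_\e$. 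Because $\bh''/\bh=u''+(u')^2>(u')^2\ge\tfrac14$ and $(\ln\bh)''=u''>0$, this piece satisfies all of (4); it is $C^\infty$ (being $e^u$ with $u$ quadratic), positive, and increasing.

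It remains to assemble $\bh$---$e^{r/2}$ on $(-\infty,n_\e]$, $e^{u}$ on $[n_\e,m_\e]$, $q$ on $[m_\e,\r_\e]$, $\cosh(r/2)$ on $[\r_\e,\infty)$---and do the bookkeeping. At $n_\e$, $m_\e$, $\r_\e$ both $\bh$ and $\bh'$ agree from the two sides (values $e^{n_\e/2}$ and $\tfrac12 e^{n_\e/2}$; $q(m_\e)$ and $\beta q(m_\e)=q'(m_\e)$; $\cosh(\r_\e/2)$ and $\tfrac12\sinh(\r_\e/2)$), so $\bh$ is globally $C^1$ and $C^\infty$ on each piece, in particular on $[n_\e,m_\e]$; since each piece is positive and increasing---note $\r_\e=(r_\e-\e^4)/2>0$, so $\cosh(r/2)$ is increasing on $[\r_\e,\infty)$---so is $\bh$, which is (1). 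Finally, $m_\e<\r_\e$ is immediate from $m_\e\in(r_0,\r_\e)$, and $n_\e<m_\e$ reduces to $m_\e<2\ln q(m_\e)$, which holds for small $\e$ since $m_\e\approx-8/\e$ while $q(m_\e)$ has order $\e$, so $\ln q(m_\e)\approx\ln\e$. This gives (1)--(5).

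The part I expect to be the main obstacle is exactly this joint calibration of $m_\e$ and $n_\e$: one has to confirm that the target logarithmic slope $\beta=\tfrac35$ is genuinely attained by $q'/q$ somewhere in $(r_0,\r_\e)$---this is where the convex correction $\e^6(r-\r_\e)^2$ in $q$ pays off, keeping $q>0$ and forcing $q'/q$ up to arbitrarily large values near $r_0$---and then that the resulting $n_\e=11m_\e-20\ln q(m_\e)$ really lies to the left of $m_\e$ and that $q$ stays positive and increasing on the whole of $[m_\e,\r_\e]$; all of this rests on the elementary but careful asymptotics $\r_\e\approx\e/2$, $r_0\approx-8/\e$, $m_\e\approx-8/\e$, $q(m_\e)=O(\e)$.
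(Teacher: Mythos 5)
Your construction is correct and delivers (1)--(5), but it takes a genuinely different route from the paper on the middle interval. The paper picks $m_\e$ where $q^\prime/q=\frac34$, forms the tangent line $L^+$ to $\ln q$ at $m_\e$ and the line $L^-(r)=r/2$, shows via Lemma~\ref{lem: q>exp(m/2)} that they cross to the left of $m_\e$, defines $\ln\bh$ on $[n_\e,m_\e]$ as a convolution smoothing (Proposition~\ref{prop: bend}) of $\max\{L^-,L^+\}$, and then reads off the slope bounds from monotonicity of $(\ln\bh)^\prime$. You instead write $\ln\bh$ down explicitly as a quadratic whose derivative interpolates affinely between $\frac12$ and your target slope $\frac35$, and calibrate $n_\e$ by the closed formula $n_\e=11m_\e-20\ln q(m_\e)$ so that values and first derivatives match at both junctions; this makes smoothness and the bounds $(\ln\bh)^{\prime\prime}>0$, $\bh^\prime/\bh\in[\frac12,\frac35]$, $\bh^{\prime\prime}/\bh>\frac14$ immediate, with no smoothing lemma needed. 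Note, though, that your consistency condition $n_\e<m_\e$ is equivalent to $e^{m_\e/2}<q(m_\e)$, which is exactly the inequality the paper isolates as Lemma~\ref{lem: q>exp(m/2)}. You justify it only by the asymptotics $m_\e\approx-8/\e$ and $q(m_\e)\asymp\e$; these are correct (since $q^\prime/q$ is decreasing on $(r_0,\r_\e]$ one gets $q(m_\e)=\frac53 q^\prime(m_\e)\approx\frac{5\e}{24}$, while $m_\e$ lies within $O(1)$ of the zero $r_0\approx\r_\e-\frac8\e$ of $l$), but this is the one place where your write-up is a sketch rather than a proof, and it needs the same explicit care the paper gives it.

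Two small further points. First, your claim that $q^\prime/q\to\infty$ as $r\downarrow r_0$ is literally false: $q(r_0)=\e^6(r_0-\r_\e)^2>0$, so the limit is the finite (though enormous, of order $\e^{-3}$) value $q^\prime(r_0)/q(r_0)$; the intermediate value argument is unaffected, since all you need is one point of $(r_0,\r_\e)$ where $q^\prime/q>\frac35$. Second, taking the slope $\frac35$ at $m_\e$ instead of the paper's $\frac34$ satisfies the proposition as stated, but the later arguments (Steps 3 and 4 of Theorem~\ref{thm: main curv estimates} and the proof of Proposition~\ref{prop: defining h}) quote the value $\frac34$ at $m_\e$; with $\frac35$ they still go through after trivially adjusting constants, but for a drop-in replacement you should define $m_\e$ by $q^\prime/q=\frac34$.
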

\begin{proof}
One verifies that $q$ is increasing on $[-\frac{1}{\e^2}, \r_\e]$, and
$q(-\frac{1}{{\e}^2})<0$ while $q(0)>0$
so the parabola $q$ has exactly one zero 
$z_\e$ in $(-\frac{1}{\e^2}, 0)$. Therefore,
on the interval $(z_\e, \r_\e]$ the slope $\frac{q^\prime}{q}$ of the
function $\ln(q)$ varies from $+\infty$ to 
$\frac{1}{2}\tanh(\frac{\r_\e}{2})=O(\e)$.
So $(z_\e, \r_\e]$ contains a point 
$m_\e$ where $\frac{q^\prime}{q}=\frac{3}{4}$.
Let $L^+$ be the tangent line to the graph of $\ln(q)$ at $m_\e$, and let 
$L^-(r)=r/2$. 

\begin{lem}\label{lem: q>exp(m/2)}
$L^+(m_\e)>L^-(m_\e)$.
\end{lem}
\begin{proof}[Proof of Lemma~\ref{lem: q>exp(m/2)}] 
Since $L^+(m_\e)=\ln(q(m_\e))$ and $L^-(r)=r/2$, 
we need to show that $q(m_\e)>e^{m_\e/2}$.
As $q\ge l$, it suffices to show that $l(m_\e)>e^{m_\e/2}$.
Using $q^\prime=3q/4$ at the point $m_\e$, and $q(r)=l(r)+\e^6(r-\r_\e)^2$
we derive that $l^\prime=3l/4+O(\e^2)$ at $m_\e$.
Denote $2l^\prime(m_\e)=\sinh(\frac{\r_\e}{2})$ by $x$;
note that $x=\e/4+O(\e^2)$. 
Then 
\[
l(m_\e)=\frac{4l^\prime(m_\e)}{3}+O(\e^2)=\frac{2x}{3}+O(\e^2),
\] 
while (\ref{form: l=tang line}) implies 
$l(m_\e)=1+m_\e\frac{x}{2}+O(\e^2)$.
Thus $m_\e=\frac{4}{3}-\frac{2}{x}+O(\e)<2-\frac{2}{x}$,
so $e^{m_\e/2}\le e^{1-\frac{1}{x}}$.
On the other hand, $l(m_\e)=\frac{2x}{3}+O(\e^2)>\frac{x}{2}$.
So it remains to show that $\frac{x}{2}>e^{1-\frac{1}{x}}$,
or equivalently $x e^\frac{1}{x}>2e$. 
One verifies that $x e^\frac{1}{x}$
decreases if $x\in (0, 1)$ and so for $0<x<\frac{1}{5}$, we get
$x e^\frac{1}{x}>\frac{e^5}{5}>2e$, and Lemma~\ref{lem: q>exp(m/2)} is proved.
\end{proof}

Combining Lemma~\ref{lem: q>exp(m/2)} with the fact that
the slope of  $L^+$ is $\frac{q^\prime}{q}(m_\e)=\frac{3}{4}$, and the slope
of $L^-$ is $\frac{1}{2}$, we get that $L^+(r)=L^-(r)$ 
for some $r<m_\e$, which is the 
only nonsmooth point of the convex
piecewise-linear function $L:=\max\{L^-, L^+\}$.
Let us fix an arbitrary $n_\e<r$.
Restricting $L$ to $[n_\e, m_\e]$, we
let $\omega_L$ be the smoothing of $L$ given by 
Proposition~\ref{prop: bend} for some small $\delta$.
Thus $\omega_L$ is a $C^\infty$ increasing function defined on
$[n_\e, m_\e]$ and such that
$\omega_L^{\prime\prime}>0$, and the graphs of $L$, $\omega_L$ touch
at the points $n_\e$, $m_\e$. 

Let $\omega$ be the function
equal to $\frac{r}{2}$ for $r\le n_\e$, equal to $\omega_L$ for
$r\in [n_\e, m_\e]$, equal to $\ln(q)$ for $r\in [m_e, \r_\e]$,
and equal to $\ln(\cosh(\frac{r}{2}))$ for $r\ge \r_\e$.
Then $\omega$ is an increasing $C^1$ function, 
and the function $\bh=e^\omega$ is positive, increasing, $C^1$,
and furthermore the restrictions of $\bh$ to $(-\infty, n_\e]$,
$[n_\e, m_\e]$, $[m_\e, \r_\e]$, $[\r_\e, +\infty)$ are $C^\infty$,
and furthermore, $\bh (r)=e^{r/2}$ if $r\le n_\e$, and $\bh=q$ if
$r\in [m_\e, \r_\e]$, and $\bh=\cosh(\frac{r}{2})$ if $r\ge \r_\e$. 

Finally, assume $r\in [n_\e, m_\e]$, and consider the function $e^{\omega_L}$, 
i.e. the restriction of $\bh$ to $[r_\e^-, r_\e^+]$. 
Certainly, $(\ln(\bh))^{\prime\prime}=w_L^{\prime\prime}>0$.
Since $\frac{\bh^\prime}{\bh}=w_L^\prime$ is increasing, 
$0<(\frac{\bh^\prime}{\bh})^\prime=
\frac{\bh^{\prime\prime}}{\bh} - (\frac{\bh^\prime}{\bh})^2$.
Hence $\frac{\bh^{\prime\prime}}{\bh} >
(\frac{\bh^\prime}{\bh})^2\ge\frac{1}{4}$,
where the last inequality holds because $\frac{\bh^\prime}{\bh}$
is bounded below by its value at $n_\e$ which equals to $\frac{1}{2}$,
because it can be computed using $\bh(r)=e^{r/2}$.
Since $\frac{\bh^\prime}{\bh}=\omega_L^\prime$ 
is increasing, $\frac{\bh^\prime}{\bh}$ varies on $[n_\e, m_\e]$
between its values at endpoints, 
where $\bh$ equals to $e^{r/2}$ and $q$, respectively,
hence $\frac{\bh^\prime}{\bh}\in [\frac{1}{2}, \frac{3}{4}]$ 
when $r\in [n_\e, m_\e]$.
\end{proof}

\begin{prop} 
\label{prop: defining h} For each small $\e$
and each $\s\in (0,\e^8)$ there is
$\delta_0>0$, and there exists a $C^\infty$ function $h=h(r)$ 
depending on the parameters
$\e$, $\s$, and $\delta\in (0,\delta_0)$
such that 
\begin{itemize}
\item $h$ is positive and increasing,
\item $h(r)=\bh(r)$ if $r$ is outside the $\s$-neighborhood
of  $\{n_\e, m_\e, \r_\e\}$,  
\item 
if $r$ is in the $\s$-neighborhood of $\{m_\e, \r_\e\}$,
then $\frac{h^{\prime\prime}}{h}>\e^6$.
\item 
if $r$ is in the $\s$-neighborhood of $n_\e$,
then $\frac{h^{\prime\prime}}{h}> \frac{1}{9}$,
\item  
if $\e, \s$ are fixed, then
$h$ converges to $\bh$ in uniform $C^1$ topology as $\delta\to 0$.
\end{itemize}
\end{prop}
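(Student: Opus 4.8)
The plan is to imitate the proof of Proposition~\ref{prop: defining v}, with the single pair of corners of $\bv$ replaced by the triple of corners of $\bh$. By Proposition~\ref{prop: properties of bh} the function $\bh$ is positive, increasing, of class $C^1$, and piecewise $C^\infty$ with smooth pieces $e^{r/2}$ on $(-\infty,n_\e]$, $e^{\omega_L}$ on $[n_\e,m_\e]$, $q=l+\e^6(r-\r_\e)^2$ on $[m_\e,\r_\e]$, and $\cosh(\frac r2)$ on $[\r_\e,\infty)$; in particular its only possible non-smooth points are $n_\e$, $m_\e$, $\r_\e$. First I would apply Lemma~\ref{lem: smoothing conv} to smooth $\bh$ at these three points and set $h:=\bh_{\delta,\s}$. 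By that lemma $h$ is $C^\infty$, positive, increasing, equal to $\bh$ outside the $\s$-neighborhood of $\{n_\e,m_\e,\r_\e\}$, converges to $\bh$ in the uniform $C^1$ topology as $\delta\to0$, and, once $\delta$ is small, is bounded above on the $\s$-neighborhood of each corner by the supremum of $\bh$ over the corresponding $2\s$-interval. This gives all the conclusions except the two second-derivative estimates.

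For those I would argue as in Proposition~\ref{prop: defining v}: exhibit a positive constant bounding $\bh''$ from below on the $2\s$-neighborhood of each corner (with one-sided derivatives at the corner itself), deduce the same lower bound for $h''$ on the $\s$-neighborhood from Lemma~\ref{lem: smoothing conv}, and divide by the upper bound on $h$ just recorded. At $m_\e$: on the left $\bh=e^{\omega_L}$ satisfies $\bh''>\bh/4$ by part~(4) of Proposition~\ref{prop: properties of bh}, and $\bh$ is of size $\Theta(\e)$ there, so $\bh''>\bh/4>2\e^6$; on the right $\bh=q$ with $l$ affine, so $\bh''=q''=2\e^6$. Hence $\bh''\ge2\e^6$ on the whole $2\s$-neighborhood, so $h''\ge2\e^6$, while $h<\cosh(\frac{\r_\e}{2})<2$, and therefore $h''/h>\e^6$. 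At $\r_\e$: on the left $\bh=q$ with $\bh''=2\e^6$, on the right $\bh=\cosh(\frac r2)$ with $\bh''=\frac14\cosh(\frac r2)\ge\frac14>2\e^6$, so again $h''\ge2\e^6$ and $h<2$, giving $h''/h>\e^6$. At $n_\e$: on the left $\bh=e^{r/2}$ with $\bh''=\bh/4$, on the right $\bh=e^{\omega_L}$ with $\bh''>\bh/4$ by part~(4), so $\bh''\ge\bh/4\ge\bh(n_\e-2\s)/4$ on the $2\s$-neighborhood, hence $h''\ge\bh(n_\e-2\s)/4$; since $\bh'/\bh\le\frac34$ on $[n_\e,m_\e]$ and equals $\frac12$ on $(-\infty,n_\e]$, we get $\bh(n_\e+2\s)\le e^{3\s}\bh(n_\e-2\s)$, so $h''/h\ge\frac14 e^{-3\s}>\frac19$ because $\s<\e^8$ is tiny.

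Given Lemma~\ref{lem: smoothing conv} and Proposition~\ref{prop: properties of bh} this is essentially mechanical. The only point that will require care is tracking the convexity of $\bh$ through the corner at $m_\e$, where the mechanism producing $\bh''>0$ switches from the exponential convexity of $e^{\omega_L}$ to the deliberately weak parabolic convexity $q''=2\e^6$ of $q$; this is why the proposition claims only $h''/h>\e^6$ near $m_\e$ and $\r_\e$, the bound $q''=2\e^6$ being the genuine bottleneck there, with no universal lower bound available. The stronger bound $h''/h>\frac19$ near $n_\e$ survives precisely because $\bh$ retains the bound $\bh''\ge\bh/4$ of $e^{r/2}$ on both sides of that corner.
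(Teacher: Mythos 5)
Your proposal is correct and follows essentially the same route as the paper: smooth $\bh$ at the three corners via Lemma~\ref{lem: smoothing conv}, bound the one-sided second derivatives of $\bh$ on the $2\s$-neighborhood of each corner from below (using $\bh''>\bh/4$ on $[n_\e,m_\e]$, $q''=2\e^6$, and $\frac14\cosh(\frac r2)$), and divide by the upper bound $h<\bh(\text{corner}+2\s)$ coming from monotonicity. The only cosmetic difference is that where a piece has $\bh''=2\e^6$ exactly you should feed Lemma~\ref{lem: smoothing conv} a constant slightly below $2\e^6$ (the paper uses $\tfrac{3\e^6}{2}$), since the lemma needs the strict inequality $f''>k$; the slack in the final bound $h''/h>\e^6$ absorbs this.
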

\begin{proof}
Let $h:=\bh_{\delta, \s}$ be the smoothing 
of $\bh$ at $n_\e$, $m_\e$, $\r_\e$, given by Lemma~\ref{lem: smoothing conv}.
In particular, $h$ is positive and increasing,
$h=\bh$ outside the $\s$-neighborhood of $\{n_\e, m_\e, \r_\e\}$,
and $h$ converges to $\bh$ uniformly in $C^1$ topology as $\delta\to 0$.

To establish the desired lower bounds on $\frac{h^{\prime\prime}}{h}$
we need to look at one-sided second derivatives
$\bh^{\prime\prime}$ and then apply Lemma~\ref{lem: smoothing conv}
to derive a lower bound on $h^{\prime\prime}$. 
In the $\s$-neighborhood of $\r_\e$ 
the one-sided second derivatives satisfy 
\[
\bh^{\prime\prime}\ge\min\{2\e^6,\frac{1}{4}
\cosh\left(\frac{\r_\e}{2}\right)\}=2\e^6,
\] 
so  Lemma~\ref{lem: smoothing conv}
implies that $h^{\prime\prime}>\frac{3\e^6}{2}$ for small $\delta$. 
As $h(r)<\bh(\r_\e+2\s)$ for small $\delta$, we conclude that
$\frac{h^{\prime\prime}}{h}> 
\frac{3\e^6}{2\cosh\left(\frac{\r_\e+2\s}{2}\right)}>\e^6$
where the last inequality holds if, say, $\s<\e^8$.

By Proposition~\ref{prop: properties of bh},
if $r\in [n_\e, m_\e]$, then $\bh^{\prime\prime}(r)>\frac{\bh(r)}{4}$.
So if $r\in [n_\e, n_\e+\s]$, then 
$\bh^{\prime\prime}(r)>\frac{\bh(r)}{4}>\frac{\bh(n_\e-2\s)}{4}$,
while if $r\in [n_\e-\s, n_\e]$, then 
$\bh^{\prime\prime}=\frac{\bh}{4}>\frac{\bh(n_\e-2\s)}{4}$.
So if $r$ is in the $\s$-neighborhood of $n_\e$, and $\delta$ 
is small, then Lemma~\ref{lem: smoothing conv} implies 
$h^{\prime\prime}(r)>\frac{\bh(n_\e-2\s)}{4}$ and 
$h(r) <\bh(n_\e+2\s)$, and thus
\[
\frac{h^{\prime\prime}}{h}> \frac{\bh(n_\e-2\s)}{4\bh(n_\e+2\s)}>\frac{1}{9},
\]
where the last inequality holds provided $\s$ is made small
while $\e$ is kept fixed.

Similarly, if $r\in [m_\e-\s, m_\e]$, then
$\bh^{\prime\prime}(r)>\frac{\bh(r)}{4}>\frac{\bh(m_\e-2\s)}{4}$,
while if $r\in [m_\e, m_\e+\s]$, then $\bh^{\prime\prime}(r)=2\e^6$.
As $\s\to 0$ we have
\[
\bh(m_\e-\s)\to \bh(m_\e)=q(m_\e)>l(m_\e)=\frac{\e}{6}+O(\e^2).
\]
So for small $\s$ we get $\bh^{\prime\prime}(r)> 2\e^6$ and 
$\bh(r)\le q(m_\e+\s)<2$ on the $\s$-neighborhood of $m_\e$.
 Thus if $\delta$ is small, 
$\frac{h^{\prime\prime}}{h}>\e^6$ on the $\s$-neighborhood of $m_\e$.
\end{proof}

\begin{thm}\label{thm: main curv estimates}
For any sufficiently small positive $\e$ there are small positive 
$\s$, $\delta$, and a negative constant $M_{\e,\s, \delta}$
such that $K(\l_{v,h})\le M_{\e,\s, \delta}$.
\end{thm}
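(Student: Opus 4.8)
The plan is to bound the sectional curvature of an arbitrary $2$-plane tangent to $I\times F$ by means of formula (\ref{form: k(c,d)}), which expresses it as $K(C,D)$ in the orthonormal basis $\{C,D\}$ of Section~\ref{sec: sectional curv}; by Remark~\ref{rmk: non-generic sigma} the non-generic case reduces to this one once the coordinate-plane curvatures are known to be negative. By Remark~\ref{rmk: lin alg} the five ``coordinate'' coefficients $c_1^2+c_2^2$, $d_1^2c_3^2$, $d_1^2c_0^2$, $d_2^2c_0^2$, $d_2^2c_3^2$ in (\ref{form: k(c,d)}) are nonnegative and sum to $1$, the only other term being the mixed term $3d_1d_2c_0c_3\langle R(\ddr,Y_1)Y_2,Y_3\rangle$. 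So it suffices to bound the mixed term from above by a sum, with these five weights, of explicit quantities, add each summand to the matching coordinate curvature from (\ref{form: k(y_i,y_1)})--(\ref{form: r(dr,y_1, y_2, y_3)}), and check that every resulting ``effective curvature'' is $\le M_{\e,\s,\delta}$ for a suitable negative constant (which may, and will, go to $0$ with $\e$).

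First I would split the domain of $v,h$ into: (a) the zone where $\l$ is the complex hyperbolic metric, so $K(C,D)\le-\tfrac14$; (b) the zone $r\le n_\e-\s$, where $v=\e e^r$ and $h=e^{r/2}$, on which substituting into (\ref{form: k(y_i,y_1)})--(\ref{form: r(dr,y_1, y_2, y_3)}) gives $K(C,D)\le-\tfrac1{10}$ directly; (c) the ``gap'' zone $[\r_\e+\s,\ r_\e^--\e^8]$, where $v=\e e^r$ and $h=\cosh(r/2)$; (d) the $h$-bending zone $[n_\e-\s,\ \r_\e+\s]$, where $v=\e e^r$; and (e) the $v$-bending zone $[r_\e^--\e^8,\ r_\e^++\e^8]$, where $h=\cosh(r/2)$. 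On (c), (d), (e) one reads off from (\ref{form: k(dr,y_i)}) that $K(\ddr,Y_1)=-v''/v<0$ and $K(\ddr,Y_2)=-h''/h<0$ (Propositions~\ref{prop: properties of bv}(4), \ref{prop: properties of bh}(4), \ref{prop: defining v}, \ref{prop: defining h}), from (\ref{form: k(y_3,y_2)}) that $K(Y_3,Y_2)\le-\tfrac1{4h^2}$, and, using $v=O(\e)$, $v'/v\ge1$, and the fact that $h'/h$ is bounded below by a positive multiple of $\e$ (being $\tfrac12\tanh(r/2)$ on (c), (e), in $[\tfrac12,\tfrac34]$ on $[n_\e,m_\e]$, and $q'/q\ge\tfrac12\tanh(\r_\e/2)$ on $[m_\e,\r_\e]$), that $K(Y_2,Y_1)=K(Y_3,Y_1)=\tfrac{v^2}{16h^4}-\tfrac{v'}{v}\tfrac{h'}{h}\le O(\e^2)-c\e<0$ for a fixed $c>0$. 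Thus every coordinate curvature is negative, and the remaining work is the mixed term.

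On zones (b), (c), (d) the mixed term is negligible: there $\tfrac{v}{h^2}=O(\e)$ --- immediate where $h\ge e^{r/2}$, and on $[m_\e,\r_\e]$ a consequence of $q(r)\ge\tfrac12 e^{r/2}$, which I would deduce from the convexity of $q$, Lemma~\ref{lem: q>exp(m/2)}, and $\cosh\ge\tfrac12\exp$ --- so $|\langle R(\ddr,Y_1)Y_2,Y_3\rangle|=|c_{23}|\tfrac{v}{h^2}\bigl|\tfrac{v'}{v}-\tfrac{h'}{h}\bigr|=O(\e)$, and an AM--GM split distributes the mixed term as $O(\e)(d_1^2c_0^2+d_2^2c_3^2)$, absorbed harmlessly into $K(\ddr,Y_1)=-1$ (here $v''/v=1$) and $K(Y_3,Y_2)$. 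The crux is zone (e), where $v'/v$ grows to $\approx\coth(r_\e^+)$ and the mixed term stays of size $O(1)$. The saving fact is that on (e) one has $h\ge1$ and $v'\le\cosh(r_\e^+)=1+O(\e^2)$ (since $\bv'$ is increasing with limiting value $\cosh(r_\e^+)$, and $v$ is $C^1$-close to $\bv$), hence $\tfrac{v}{h^2}\bigl(\tfrac{v'}{v}-\tfrac{h'}{h}\bigr)\le\tfrac{v'}{h^2}\le v'\le1+O(\e^2)$ and therefore $|\langle R(\ddr,Y_1)Y_2,Y_3\rangle|\le|c_{23}|\bigl(1+O(\e^2)\bigr)$. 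Using $|d_1c_0||d_2c_3|\le\tfrac12(d_1^2c_0^2+d_2^2c_3^2)$, the mixed term is at most $\tfrac32|c_{23}|\bigl(1+O(\e^2)\bigr)(d_1^2c_0^2+d_2^2c_3^2)$, and (recalling $|c_{23}|\le\tfrac12$) I would absorb its $d_1^2c_0^2$-part into $K(\ddr,Y_1)=-v''/v$: on the interior of (e), $v''/v>(v'/v)^2\ge1$ by Proposition~\ref{prop: properties of bv}(4), and on the two $\e^8$-smoothing ends $v''/v>1-O(\e^3)$ by Proposition~\ref{prop: defining v}, while $\tfrac32|c_{23}|\le\tfrac34$, so the effective curvature there stays $\le-\tfrac18$; and I would absorb its $d_2^2c_3^2$-part into $K(Y_3,Y_2)=-\tfrac1{4h^2}-\tfrac{3c_{23}^2}{h^2}-\tfrac{3c_{23}^2v^2}{4h^4}-(\tfrac{h'}{h})^2$: since $h^2=1+O(\e^2)$, the effective curvature is $\le-\bigl(\tfrac14+3t^2-\tfrac32t\bigr)+O(\e^2)$ with $t=|c_{23}|\in[0,\tfrac12]$, and $\tfrac14+3t^2-\tfrac32t\ge\tfrac1{16}$, so it stays $\le-\tfrac1{32}$. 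The two remaining coordinate terms on (e), $K(\ddr,Y_2)=-\tfrac14$ and $K(Y_2,Y_1)=K(Y_3,Y_1)\le-c\e$, are already negative.

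Taking $M_{\e,\s,\delta}$ to be the (negative) minimum of these finitely many bounds then completes the proof; it is of order $-O(\e)$, the smallest contributions coming from $K(Y_2,Y_1)$ and from $K(\ddr,Y_2)=-h''/h$ near $m_\e$, where Proposition~\ref{prop: defining h} only gives $h''/h>\e^6$ (this is unavoidable and harmless for the statement). I expect zone (e) to be the main obstacle: the non-negligible mixed term must be paid for \emph{simultaneously} out of $K(\ddr,Y_1)=-v''/v$ --- exploiting the log-convexity $v''/v>(v'/v)^2$, so that wherever $v'/v$ is large the curvature budget is correspondingly large, while near the endpoints, where $v''/v$ is only $\approx 1$, the mixed term is itself $O(\e)$ --- and out of the term $-3c_{23}^2/h^2$ in $K(Y_3,Y_2)$, which carries the very same $c_{23}$ that weights the mixed term; the AM--GM weight has to be chosen compatibly (the value $1$ works, the constraints on it being essentially $\tfrac{\sqrt3}{2}<\lambda<\tfrac43$), and it is the sharp estimate $\tfrac{v}{h^2}\bigl(\tfrac{v'}{v}-\tfrac{h'}{h}\bigr)\le v'\le\cosh(r_\e^+)$ that makes the two requirements coexist.
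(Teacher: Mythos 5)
Your proposal is correct and follows essentially the same route as the paper: the same decomposition into the complex-hyperbolic zone, the $v$-bending zone, the gap, the $h$-bending zone and the far end, the bound $\frac{v}{h^2}=O(\e)$ wherever $v=\e e^r$ and $h\gtrsim e^{r/2}$, and, in the $v$-bending zone, the absorption of the mixed term (bounded by $|c_{23}|(1+O(\e^2))$) into $K(\ddr,Y_1)\approx-1$ and the $-3c_{23}^2/h^2$ part of $K(Y_3,Y_2)$ via AM--GM, which is exactly the mechanism of the paper's Step 1. The only differences are cosmetic: you finish with a direct weighted-average bound with explicit negative constants where the paper completes the square and then invokes a compactness argument, and you justify $\frac{v}{h^2}=O(\e)$ on $[m_\e,\r_\e]$ via $q\ge\tfrac12 e^{r/2}$ (from Lemma~\ref{lem: q>exp(m/2)}) rather than via convexity of $\e e^r/q^2$ as in Step 3.
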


\begin{rmk} More precisely, there are ranges of
$\e$, $\s$, $\delta$ for which Theorem~\ref{thm: main curv estimates}
holds, namely, $\e\in (0,\e_0)$, $\s\in (0, \s_0(\e))$, and
$\delta\in (0, \delta_0(\e,\s))$, i.e. the range of $\s$
depends on $\e$ and the range on $\delta$ depends on $\e, \s$.
\end{rmk}

\begin{proof} 
It is enough to give a proof for $2$-planes that project isomorphically 
to $\chm$, because they form a dense subset in every tangent space.
The points $r_\e^+$, $r_\e^-$, $\r_\e$, $m_\e$, $n_\e$
divide the real line into six intervals, and we estimate the curvature
on each interval separately.

{\bf Step 0.} Suppose $r\ge r_\e^+$.
Then $\bv=\sinh(r)$ and $h=\bh=\cosh(\frac{r}{2})$, 
and $v$ converges to $\bv$ in $C^1$ topology
as $\delta\to 0$, and $\frac{v^{\prime\prime}}{v}>1+O(\e)$. 
If $v$ were equal to $\bv$, then the metric would be complex hyperbolic
giving $K(C, D)\le -\frac{1}{4}$. In general,
the formulas (\ref{form: k(c,d)})--(\ref{form: r(dr,y_1, y_2, y_3)})
immediately imply that the upper curvature bound for
$K(C, D)$ converges to $-\frac{1}{4}+O(\e)$, as $\delta\to 0$, so that
$K(C, D)\le -\frac{1}{5}$ for all sufficiently small $\e$, $\delta$.

{\bf Step 1.} Suppose $r\in [r_\e^-, r_\e^+]$.
Then $h=\bh=\cosh(\frac{r}{2})$, and $\bv$ is positive, increasing
and $(\ln(\bv))^{\prime\prime}>0$. Furthermore,
$v$ converges to $\bv$ in $C^1$ topology 
as $\delta\to 0$, and $\frac{v^{\prime\prime}}{v}>1+O(\e)$.  

Since $\bv$ is increasing, $\bv(r)\le v(r_\e+\e^4)=\sinh(r_\e+\e^4)$,
and since $\bh$ is increasing, we have $h(r)\ge\cosh(\frac{r_\e-\e^4}{2})$
so 
\[
\frac{\bv}{h^2}\le\frac{\sinh(r_\e+\e^4)}{\cosh(\frac{r_\e-\e^4}{2})}=
\e+O(\e^2)<2\e.
\]
Also since $\frac{\bv^\prime}{\bv}$ is increasing, it can be estimated 
at endpoints where $\bv$ is equal to $\e e^r$, $\sinh(r)$. Thus
\[
1\le \frac{\bv^\prime}{\bv}\le\coth(r_\e+\e^4).
\]
On the other hand, 
$\frac{h^\prime}{h}=\frac{1}{2}\tanh(\frac{r}{2})=\frac{\e}{4}+O(\e^2)$
is small and positive, in particular,
\[
0<\frac{\bv^\prime}{\bv}-\frac{h^\prime}{h}\le\coth(r_\e+\e^4)\ \ \
\text{and}\ \ \
\frac{h^\prime}{h}\frac{\bv^\prime}{\bv}\ge
\frac{\e}{4}+O(\e^2)>\frac{\e}{5}. 
\]
Also 
\[
\frac{\bv}{h^2}\left(\frac{\bv^\prime}{\bv}-\frac{h^\prime}{h}\right)\le
\frac{\sinh(r_\e+\e^4)}{\cosh(\frac{r_\e-\e^4}{2})}
\coth(r_\e+\e^4)=1+O(\e^2).
\]

%
If $v$ were equal to $\bv$, as happens for $r\in [r_\e^-+\e^8, r_\e^+-\e^8]$,
then the above estimates would imply the following
\begin{eqnarray}
& \label{form: step1-mixed}
|\langle R(\ddr , Y_1) Y_2, Y_3\rangle |\le |c_{23}|+O(\e^2)<|c_{23}|+\e,\\
& \label{form: step1-k(yiy1)}
K (Y_2,Y_1)=K(Y_3,Y_1)=
\frac{\bv^2}{16h^4}-\frac{\bv^\prime}{\bv}\frac{h^\prime}{h}<
-\frac{\e}{5}<0,\\
& \qquad\ K(Y_3, Y_2)<
-\frac{1}{h^2}\left(\frac{1}{4}+3c_{23}^2\right)<
-\frac{1}{\cosh^2(r_\e^+)}\left(\frac{1}{4}+3c_{23}^2\right)<
-\left(\frac{1}{5}+3c_{23}^2\right),
\end{eqnarray}
and since the inequalities are strict, they hold for $v$ in place of $\bv$ 
provided $\delta$ is made small while $\e$ is kept fixed,
so from now on we switch to $v$. 
Since $\frac{v^{\prime\prime}}{v}>1+O(\e)$ and
$h=\cosh(\frac{r}{2})$, we get  
%
\begin{eqnarray*}
K(\ddr, Y_1)<-1-O(\e)
\qquad\text{and}\qquad
K(\ddr, Y_2)=-\frac{1}{4}.
\end{eqnarray*}
From the formula (\ref{form: k(c,d)}) we get
\begin{eqnarray}
\label{form: step1} 
& K(C , D)\le m(C, D, \e):= 
-\frac{\e}{5}\left((d_1c_2-d_2c_1)^2 + d_1^2c_3^2\right)- \\
& \nonumber
(1+O(\e))d_1^2c_0^2
-d_2^2c_3^2\left(\frac{1}{5}+3c_{23}^2\right)
-\frac{1}{4}d_2^2c_0^2 
+3\left(|c_{23}|+\e\right)|d_1d_2c_0c_3|=\\
& \nonumber -\frac{\e}{5}\left((d_1c_2-d_2c_1)^2 + d_1^2c_3^2\right) -
\left(|d_1c_0|\sqrt{1+O(\e)}-|d_2c_3|\sqrt{\frac{1}{5}+3c_{23}^2}\right)^2-\\
&  \nonumber
\frac{1}{4}d_2^2c_0^2+
|d_1d_2c_0c_3|\left(3|c_{23}|+3\e -2\sqrt{1+O(\e)}
\sqrt{\frac{1}{5}+3c_{23}^2}\right),
\end{eqnarray}
where $3|c_{23}|<2\sqrt{1+O(\e)}\sqrt{\frac{1}{5}+3c_{23}^2}$ so every
summand in $m(C, D, \e)$ is nonpositive for small $\e$. 
In fact, if $\e$ is sufficiently small and positive, 
then $m(C, D, \e)<0$. (Otherwise, every summand 
would have to vanish. In particular, 
$|d_1c_0|=|d_2c_3|\sqrt{\frac{1}{5}+3c_{23}^2}$, and 
$d_2c_0=0$. 
The equation $d_2c_0=0$ would imply that 
either $d_2$ or $c_0$ vanishes.
If $d_2=0$, then $|d_1|=1$ and $c_0=0$, so that
$(d_1c_2-d_2c_1)^2 + d_1^2c_3^2=c_1^2+c_2^2+c_3^2=1$. 
If $d_2\neq 0$, then $c_0=0$, and hence $c_3=0$, so 
$(d_1c_2-d_2c_1)^2 + d_1^2c_3^2=c_1^2+c_2^2=1$.
So in either case we get a contradiction with the fact that
every summand vanishes).

Let $M_1(\e)$ be the maximum of $m(C, D, \e)$ over all orthonormal 
$C, D$; by compactness the maximum is attained, i.e. 
$M_1(\e)=m(C^*, D^*, \e)$ for some $C^*$, $D^*$, and 
by the previous paragraph, $m(C^*, D^*, \e)<0$, so 
$K(C,D)\le M_1(\e)<0$ for all $C, D$ and all small positive $\e$.

{\bf Step 2.}
Suppose $r\in [\r_\e, r_\e^-]$. If $r$ is not in the $\s$-neighborhood of
$\{\r_\e, r_\e^-\}$, then $v(r)=\e e^r$
and $h(r)=\cosh(\frac{r}{2})$, and in general 
$v, h$ converge to $\e e^r$, $\cosh(\frac{r}{2})$ in $C^1$-topology
as $\delta\to 0$, and furthermore 
by Propositions~\ref{prop: defining v}, \ref{prop: defining h}
$\frac{v^{\prime\prime}}{v}\ge 1+O(\e)>\frac{1}{4}$
and $\frac{h^{\prime\prime}}{h}>\e^6$.
Then one verifies that $\frac{v}{h^2}<2\e$ and 
$\frac{h^\prime}{h}> \frac{\e}{9}$ for small $\e, \delta$.
The formulas 
(\ref{form: k(y_i,y_1)})--(\ref{form: r(dr,y_1, y_2, y_3)})
give the following.  
\begin{eqnarray*} 
& 
K (Y_2,Y_1)=K(Y_3,Y_1)<-\frac{\e}{10},\\
& 
K (Y_3,Y_2)<
-\frac{1}{4h^2}< -\frac{1}{9},\\
& K(\ddr,Y_1)<-\frac{1}{4}, \ \ \ \ \
K(\ddr,Y_2)<-\e^6,\\
& 
|\langle R(\ddr , Y_1) Y_2, Y_3\rangle|=
|c_{23}|(2\e+O(\e^2))\le\e+O(\e^2)<2\e.
\end{eqnarray*}
Thus $K(C , D)$ is bounded above by
\begin{eqnarray*}
&  -\frac{\e}{10}\left((d_1c_2-d_2c_1)^2 + d_1^2c_3^2\right)
-\frac{1}{4}d_1^2c_0^2-\e^6 d_2^2c_0^2
-\frac{1}{9}d_2^2c_3^2 +6\e |d_1d_2c_0c_3|=\\
& \nonumber -\frac{\e}{10}\left((d_1c_2-d_2c_1)^2 + d_1^2c_3^2\right)
-\e^6d_2^2c_0^2-(\frac{1}{2}|d_1c_0|-\frac{1}{3}|d_2c_3|)^2+
|d_1d_2c_0c_3|(6\e-\frac{1}{3}),
\end{eqnarray*}
in which every summand is nonpositive. Then the argument
of Step $1$ gives a function $M_2(\e)$ such that
$K(C,D)\le M_2(\e)<0$ for all $C, D$ and all small positive $\e$. 

{\bf Step 3.} Suppose $r\in [m_\e, \r_\e]$ so that $v(r)=\bv=\e e^r$,
$\bh=q$, and $\frac{h^{\prime\prime}}{h}>\e^6$.
If $r$ is outside the $\s$-neighborhood of $\{m_\e, \r_\e\}$ then
$h=q$, and on the whole interval $ h$ converges to $q$ in $C^1$-topology
as $\delta\to 0$. 

On the interval $[m_\e, \r_\e]$
one computes that 
$q^\prime=\frac{\e}{8}+O(\e^2)$ and hence $q^\prime>0$,
so that $q(r)< q(\r_\e)=\cosh(\frac{\r_\e}{2})=1+O(\e^2)$,
while $q^{\prime\prime}=2\e^6$, and therefore 
$\left(\frac{q^\prime}{q}\right)^\prime=
\frac{qq^{\prime\prime}-(q^\prime)^2}{q^2}<0$, i.e. 
$\frac{q^\prime}{q}$ decreases on $[m_\e, \r_\e]$ from $\frac{3}{4}$
to $\frac{1}{2}\tanh(\frac{\r_\e}{2})=\frac{\e}{8}+O(\e^3)$, the values of
$\frac{q^\prime}{q}$ at the endpoints of $[m_\e, \r_\e]$.
Thus if $\delta$ is small, then
$\frac{h^\prime}{h}\in (\frac{\e}{9}, \frac{4}{5})$ 
on $[m_\e, \r_\e]$.

Furthermore,
one computes that $\frac{v}{\bh^2}=\frac{\e e^r}{q^2}$ satisfies
\[
\left(\frac{\e e^r}{q^2}\right)^\prime=2\frac{\e e^r}{q^2}
\left(\frac{1}{2}-\frac{q^\prime}{q}\right)\ \ \ \text{and}\ \ \
\left(\frac{\e e^r}{q^2}\right)^{\prime\prime}=2\frac{\e e^r}{q^2}\left(
2\left(\frac{1}{2}-\frac{q^\prime}{q}\right)^2-
\left(\frac{q^\prime}{q}\right)^\prime\right)>0.
\]
So the point where $\frac{q^\prime}{q}=\frac{1}{2}$
is the global minimum of $\frac{e^r}{q^2}$, and the maximum 
is attained at the endpoints. We conclude that
\[
\frac{v}{\bh^2}\le
\e\cdot\max
\left\{
\frac{e^{m_e}}{q(m_\e)^2}, 
\frac{e^{\r_\e}}{\cosh^2(\frac{\r_\e}{2})}\right\}=
\e \frac{e^{\r_\e}}{\cosh^2(\frac{\r_\e}{2})}<2\e.
\]
where the equality in the middle holds because
$e^{m_e}<q(m_\e)^2$ by Lemma~\ref{lem: q>exp(m/2)}. 
Hence for small $\delta$ we have $\frac{v}{h^2}<2\e$.
In summary, for small $\e,\delta$
the above estimates combined with formulas 
(\ref{form: k(y_i,y_1)})--(\ref{form: r(dr,y_1, y_2, y_3)})
imply the following.
\begin{eqnarray*} 
& 
K (Y_2,Y_1)=K(Y_3,Y_1)\le
\frac{\e^2}{4}-\frac{\e}{9}<-\frac{\e}{10},\\
& 
K (Y_3,Y_2)<-\frac{1}{4h^2}<
-\frac{1}{4\cosh^2(\r_\e)}
<-\frac{1}{9},\\
& 
K(\ddr,Y_1)=-1, \ \ \ \ \
K(\ddr,Y_2)\le -\e^6,\\
& 
|\langle R(\ddr , Y_1) Y_2, Y_3\rangle|\le 
|c_{23}|2\e(1-\frac{\e}{9})<\e.
\end{eqnarray*}

From the formula (\ref{form: k(c,d)}) we conclude that
$K(C , D)$ is bounded above by
\begin{eqnarray*}
&  -\frac{\e}{10}\left((d_1c_2-d_2c_1)^2 + d_1^2c_3^2\right)
-d_1^2c_0^2-\e^6 d_2^2c_0^2
-\frac{1}{9}d_2^2c_3^2 +3\e |d_1d_2c_0c_3|=\\
& \nonumber -\frac{\e}{10}\left((d_1c_2-d_2c_1)^2 + d_1^2c_3^2\right)
-\e^6d_2^2c_0^2-(|d_1c_0|-\frac{1}{3}|d_2c_3|)^2+
|d_1d_2c_0c_3|(3\e-\frac{1}{3}),
\end{eqnarray*}
in which every summand is nonpositive. Then the argument as in 
Step $1$ gives a function $M_3(\e)$ such that
$K(C,D)\le M_3(\e)<0$ for all $C, D$ and all small positive $\e$.

{\bf Step 4.}
Suppose $r\in [n_\e, m_\e]$ so that $v(r)=\e e^r$, the function
$\frac{\bh^\prime}{\bh}$ 
is increasing, $h$ converges to $\bh$ in $C^1$-topology
as $\delta\to 0$, and furthermore,
$\frac{h^{\prime\prime}}{h}>\e^6$.

The values of $\frac{\bh^\prime}{\bh}$ at endpoints $n_\e, m_\e$
are $\frac{1}{2}$, $\frac{3}{4}$, respectively. 
So $\frac{v\prime}{v}-\frac{\bh^\prime}{\bh}\le\frac{1}{2}$.
Hence $\frac{v\prime}{v}-\frac{h^\prime}{h}<1$ and $\frac{h^\prime}{h}>\frac{1}{3}$
for small $\delta$.

Since $\ln(\bh)$ is convex, the graph of $\ln(\bh)$
is above its tangent line at $n_\e$, i.e. $\ln(\bh(r))\ge r/2$,
so that $\bh(r)\ge e^{r/2}$. 
It follows that
$\frac{v}{\bh^2}\le \frac{\e e^r}{e^r}=\e$
so that $\frac{v}{h^2}<2\e$ for small $\delta$.
The above estimates combined with formulas 
(\ref{form: k(y_i,y_1)})--(\ref{form: r(dr,y_1, y_2, y_3)})
imply the following.
\begin{eqnarray*} 
& 
K (Y_2,Y_1)=K(Y_3,Y_1)<
\frac{\e^2}{4}-\frac{1}{3}<-\frac{1}{4},\\
& 
K (Y_3,Y_2)<-\left(\frac{h^\prime}{h}\right)^2\le -\frac{1}{9},\\
& 
K(\ddr,Y_1)=-1, \ \ \ \ \
K(\ddr,Y_2)<-\e^6,\\
& 
|\langle R(\ddr , Y_1) Y_2, Y_3\rangle|\le 
|c_{23}|2\e\le \e.
\end{eqnarray*}
From the formula (\ref{form: k(c,d)}) we conclude that
$K(C , D)$ is bounded above by
\begin{eqnarray*}
&  -\frac{1}{4}\left((d_1c_2-d_2c_1)^2 + d_1^2c_3^2\right)
-d_1^2c_0^2-\e^6 d_2^2c_0^2
-\frac{1}{9}d_2^2c_3^2 +3\e |d_1d_2c_0c_3|=\\
& \nonumber -\frac{1}{4}\left((d_1c_2-d_2c_1)^2 + d_1^2c_3^2\right)
-\e^6d_2^2c_0^2-(|d_1c_0|-\frac{1}{3}|d_2c_3|)^2+
|d_1d_2c_0c_3|(3\e-\frac{1}{3}),
\end{eqnarray*}
in which every summand is nonpositive. Then the argument as in 
Step $1$ gives a function $M_4(\e)$ such that
$K(C,D)\le M_4(\e)<0$ for all $C, D$ and all small positive $\e$.

{\bf Step 5.} Suppose $r\le n_\e$ so that $v(r)=\e e^r$,
$\bh(r)=e^{r/2}$, the function $h$ converges to $\bh$ in $C^1$-topology
as $\delta\to 0$, and furthermore,
$\frac{h^{\prime\prime}}{h}>\frac{1}{9}$.
Hence $\frac{\bh^\prime}{\bh}=\frac{1}{2}$ and 
$\frac{v}{\bh^2}=\e$ implying 
$\frac{h^\prime}{h}>\frac{1}{3}$ and 
$\frac{v}{h^2}<2\e$.

Plugging into formulas 
(\ref{form: k(y_i,y_1)})--(\ref{form: r(dr,y_1, y_2, y_3)})
we get the following.  
\begin{eqnarray*} 
& 
K (Y_2,Y_1)=K(Y_3,Y_1)<
\frac{\e^2}{4}-\frac{1}{3}<-\frac{1}{4},\\
& 
K (Y_3,Y_2)<-\left(\frac{h^\prime}{h}\right)^2\le -\frac{1}{9},\\
& 
K(\ddr,Y_1)=-1, \ \ \ \ \
K(\ddr,Y_2)<-\frac{1}{9},\\
& 
|\langle R(\ddr , Y_1) Y_2, Y_3\rangle|\le 
|c_{23}|2\e(1-\frac{1}{3})< \e.
\end{eqnarray*}
From the formula (\ref{form: k(c,d)}) we conclude that
$K(C , D)$ is bounded above by
\begin{eqnarray*}
&  -\frac{1}{4}\left((d_1c_2-d_2c_1)^2 + d_1^2c_3^2\right)
-d_1^2c_0^2-\frac{1}{9}d_2^2c_0^2
-\frac{1}{9}d_2^2c_3^2 +3\e |d_1d_2c_0c_3|
%
\end{eqnarray*}
which is bounded above by $-\frac{1}{9}+3\e<-\frac{1}{10}$ because 
$|d_1d_2c_0c_3|\le 1$ and
\begin{eqnarray}
(d_1c_2-d_2c_1)^2+d_1^2c_3^2+d_1^2c_0^2+d_2^2c_3^2+d_2^2c_0^2=1,
\end{eqnarray}
which completes the proof.
\end{proof}

\begin{rmk} By a standard argument, 
recorded in Section~\ref{sec: proof of main thm}, 
the metric $\l_{v,h}$ constructed in 
Theorem~\ref{thm: main curv estimates} is complete.
\end{rmk}

\section{$A$-regular metrics of negative curvature}
\label{sec: A-reg}

The metric $\l_{v,h}$ constructed in 
Theorem~\ref{thm: main curv estimates} is not $A$-regular
because by definition any $A$-regular metric has a two-sided sectional
curvature bound, so if $\l_{v,h}$ were $A$-regular, it would
be negatively pinched, which is ruled
out by part (13) of Corollary~\ref{thm: intro-gr-theoretic-cor}.
In this section we modify $\l_{v,h}$ outside a large
compact set so that the new metric is $A$-regular
and has negative sectional curvature, which as we just explained 
cannot be bounded away from zero.

Let $\tau_\e:=\e e^{n_\e}$; note that $0<\tau_\e< 2\e$ because
$n_\e<\r_\e<\e$. Therefore, 
the parameters $o_\e:=\ln(\tau_\e)$ and $p_\e:=2\ln(\tau_\e)$
are negative, and go to $-\infty$ as $\e\to 0$, and moreover, 
$p_\e<o_\e=\ln(\e)+n_\e\ll n_\e$.
Let $F(r):=\frac{1}{2}\frac{e^{r/2}}{\tau_\e + e^{r/2}}$; 
this is the derivative of $\ln(\tau_\e+e^{r/2})$. 
Note that $F^\prime>0$, $F\in (0,\frac{1}{2})$, and
$F(p_\e)=\frac{1}{4}$.

\begin{prop} \label{prop: properties of bg}
For each small $\e>0$ there is a $C^1$ function
$\bg$ such that 
\begin{itemize}
\item
$\bg$ is positive and increasing,
\item 
if $r\ge o_\e$, then $\bg$ coincides with the function $h$
of Proposition~\ref{prop: defining h}, and in particular,
$\bg(r)=e^{r/2}$ for $r\in [o_\e, o_\e+1]$,
\item
$\bg(r)=\tau_\e+e^{r/2}$ for $r\in (-\infty, p_\e]$, 
\item
if $r\in [p_\e, o_\e]$, 
then $\bg$ is $C^\infty$, and $\frac{\bg^\prime}{\bg}$ is increasing, 
and $\frac{\bg^\prime}{\bg}\in [\frac{1}{4},\frac{1}{2}]$, and 
$\frac{\bg^{\prime\prime}}{\bg}>
\left(\frac{\bg^{\prime}}{\bg}\right)^2\ge \frac{1}{16}$,
\end{itemize}
\end{prop}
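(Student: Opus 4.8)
The plan is to build $\bg$ by gluing three pieces: $\bg(r):=\tau_\e+e^{r/2}$ on $(-\infty,p_\e]$, $\bg(r):=h(r)$ on $[o_\e,+\infty)$ with $h$ the function of Proposition~\ref{prop: defining h}, and an interpolation on $[p_\e,o_\e]$ prescribed through its logarithmic derivative. Note that, unlike in Propositions~\ref{prop: properties of bv} and~\ref{prop: properties of bh}, $\bg$ cannot be produced by smoothing the maximum of two convex functions, because after taking logarithms the left model $\ln(\tau_\e+e^{r/2})$ strictly dominates the right model $r/2=\ln(e^{r/2})$ for every $r$; so I would work directly with $(\ln\bg)'$. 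Both models are log-convex: $(\ln(\tau_\e+e^{r/2}))'=F$ with $F'=\frac{\tau_\e e^{r/2}}{4(\tau_\e+e^{r/2})^2}>0$, and $r/2$ is linear; their logarithmic slopes at the seams are $F(p_\e)=\frac14$ and $(\ln h)'(o_\e)=\frac12$, the latter because $h(r)=e^{r/2}$ on a neighbourhood of $o_\e$ (here one uses $o_\e+1<n_\e-\s$, valid for small $\e$ since $o_\e=\ln\e+n_\e$).

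The bookkeeping I would carry out is this: from $e^{p_\e/2}=\tau_\e$ and $e^{o_\e/2}=\sqrt{\tau_\e}$ one gets $\bg(p_\e)=2\tau_\e$, while $\bg(o_\e)=h(o_\e)=\sqrt{\tau_\e}$. Setting $G:=(\ln\bg)'$ on $[p_\e,o_\e]$, the task reduces to finding a $C^\infty$ strictly increasing $G$ with $G(p_\e)=\frac14$, $G(o_\e)=\frac12$, $G\in[\frac14,\frac12]$, and
\[
\int_{p_\e}^{o_\e}G(s)\,ds=\ln\bg(o_\e)-\ln\bg(p_\e)=-\frac{o_\e}{2}-\ln 2 .
\]
This forces the mean value of $G$ over $[p_\e,o_\e]$ (an interval of length $-o_\e$) to be $\frac12+\frac{\ln 2}{o_\e}$, which lies strictly inside $(\frac14,\frac12)$ provided $-o_\e>4\ln 2$ — true once $\e$ is small, since $-o_\e\to+\infty$. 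I would then write $G=\frac14+\frac14\Phi$, with $\Phi$ a $C^\infty$ strictly increasing function on $[p_\e,o_\e]$ satisfying $\Phi(p_\e)=0$, $\Phi(o_\e)=1$, $\int_{p_\e}^{o_\e}(1-\Phi)=4\ln 2$; such $\Phi$ exists because $1-\Phi$ can be any $C^\infty$ strictly decreasing function from $1$ to $0$ and its integral over an interval of length $-o_\e>4\ln 2$ can be tuned to any value in $(0,-o_\e)$. Finally set $\bg(r):=2\tau_\e\exp\!\left(\int_{p_\e}^{r}G(s)\,ds\right)$ for $r\in[p_\e,o_\e]$.

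The remaining verifications are routine. Positivity and monotonicity hold on each piece ($\bg'=G\bg>0$ since $G\ge\frac14$, and $h$ is positive increasing); the middle piece is $C^\infty$ because $G$ is. The $C^1$-matching at $p_\e$ follows since both one-sided values equal $2\tau_\e$ and both one-sided derivatives equal $\frac12\tau_\e$, and at $o_\e$ the integral identity gives $\bg(o_\e^-)=2\tau_\e e^{-o_\e/2-\ln 2}=e^{o_\e/2}=h(o_\e)$ and $\bg'(o_\e^-)=\frac12 e^{o_\e/2}=h'(o_\e)$. On $[p_\e,o_\e]$ one has $\frac{\bg'}{\bg}=G=\frac14+\frac14\Phi\in[\frac14,\frac12]$ increasing, and using $(\ln\bg)''=\frac{\bg''}{\bg}-\left(\frac{\bg'}{\bg}\right)^2$ and $(\ln\bg)''=G'=\frac14\Phi'>0$,
\[
\frac{\bg''}{\bg}=G'+G^2>G^2=\left(\frac{\bg'}{\bg}\right)^2\ge\frac{1}{16}.
\]
I expect the only genuinely delicate point to be the feasibility of the integral constraint — i.e. that the mean logarithmic slope $\frac12+\frac{\ln 2}{o_\e}$ imposed on the bridging interval falls in the admissible window $(\frac14,\frac12)$ — and this is exactly the reason $p_\e$, $o_\e$ were placed far enough out that $-o_\e>4\ln 2$, which holds for $\e$ small.
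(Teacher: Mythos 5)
Your construction is correct, and it takes a genuinely different route from the paper. The paper builds $\ln\bg$ on the bridge $[p_\e,o_\e]$ by a tangent-line trick: it takes $l^-$, the tangent line to $\ln(\tau_\e+e^{r/2})$ at $p_\e$ (slope $\frac14$), and $l^+(r)=r/2$ (slope $\frac12$, which is $\ln h$ near $o_\e$), checks that $l^-(p_\e)>l^+(p_\e)$ while $l^-(o_\e)<l^+(o_\e)$ so the two lines cross inside $(p_\e,o_\e)$, and then smooths the convex piecewise-linear $\max\{l^-,l^+\}$ via Proposition~\ref{prop: bend}; exponentiating gives $\bg$, and the bounds $\frac{\bg'}{\bg}\in[\frac14,\frac12]$, $\frac{\bg''}{\bg}>(\frac{\bg'}{\bg})^2\ge\frac1{16}$ come from log-convexity plus evaluation of $w_l'$ at the endpoints. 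You instead prescribe $G=(\ln\bg)'$ directly as a smooth increasing function from $\frac14$ to $\frac12$ subject to the integral constraint $\int_{p_\e}^{o_\e}G=-\frac{o_\e}{2}-\ln 2$ forced by the endpoint values $\bg(p_\e)=2\tau_\e$, $\bg(o_\e)=\sqrt{\tau_\e}$, and verify the $C^1$ matching by hand. Note that the feasibility condition you isolate, $-o_\e>4\ln 2$ (i.e.\ $\tau_\e<\frac1{16}$), is exactly the same inequality that makes the paper's two tangent lines cross inside the interval, so the two arguments hinge on the same numerical fact; the paper's version buys uniformity with Propositions~\ref{prop: properties of bv} and~\ref{prop: properties of bh} and reuses the smoothing machinery of the appendix, while yours is more self-contained and makes the endpoint data and the constraint completely explicit. (Your opening remark is about the max of the two model log-functions, which indeed has no corner; the paper sidesteps this by maximizing tangent lines rather than the models themselves, so the obstruction you mention does not apply to its actual construction.) One small point to tighten: for the strict inequality $\frac{\bg''}{\bg}>\left(\frac{\bg'}{\bg}\right)^2$ you need $G'=\frac14\Phi'>0$ pointwise, and strict monotonicity of $\Phi$ alone does not guarantee a nowhere-vanishing derivative — simply stipulate that $\Phi$ is chosen with $\Phi'>0$ on all of $[p_\e,o_\e]$, which is compatible with the boundary values and the integral condition.
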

\begin{proof} 
The function $\bg$ is defined outside of $(p_\e, o_\e)$ 
so we just need to interpolate on this interval. Since
$\ln(\bg)$ equals to $r/2$ on $[o_\e, o_\e+1]$,
it coincides with its tangent line $l^+(r)=r/2$
at $o_\e$.
Let $l^-(r)=\ln(2\tau_\e)+\frac{1}{4}(r-2\ln(\tau_\e))$,
i.e. $l^-$ is the tangent line to the graph of
$\ln(\tau_\e+e^{r/2})$ at the point $p_\e=2\ln(\tau_\e)$. Then
\[
l^-(p_\e)=l^-(2\ln(\tau_\e))=\ln(2\tau_\e)>\ln(\tau_\e)=l^+(p_\e).
\]
On the other hand,
\[
l^-(o_\e)=l^-(\ln(\tau_\e))=\ln 2+\frac{3}{4}\ln(\tau_\e)<
\frac{1}{2}\ln(\tau_\e)=l^+(o_e),
\]
hence the lines $l^-, l^+$
intersect on the interval $(p_\e, o_\e)$.
The slope of $l^-$ is $\frac{1}{4}$ which is smaller
that the slope of $l^+$, 
thus the function $l:=\max\{l^-, l^+\}$ is convex and increasing.
Restricting $l$ to $[p_\e, o_\e]$, we
let $w_l$ be the smoothing of $l$ given by 
Proposition~\ref{prop: bend} for some small $\delta$.
Thus $w_l$ is a $C^\infty$ increasing function defined on
$[p_\e, o_\e]$
and such that
$w_l^{\prime\prime}>0$, and the graphs of $l$, $w_l$ touch
at the points $p_\e$, $o_\e$.

Let $w$ be the function
equal to $\ln(\tau_\e+e^{r/2})$ for $r\le p_\e$, 
equal to $w_l$ for $r\in [p_\e, o_\e]$, 
and equal to $\ln(h)$ for $r\ge o_\e$, where $h$ is the function 
of Proposition~\ref{prop: defining h}.
Then $w$ is an increasing $C^1$ function, and the function 
$\bg:=e^w$ is positive, increasing, $C^1$, and furthermore,
the restrictions of $\bg$ to $(-\infty, p_\e]$, 
$[p_\e, o_\e]$, $[o_\e, \infty)$ are $C^\infty$.

Finally, assume $r\in [p_\e, o_\e]$, and consider the function $e^{w_l}$, 
i.e. the restriction of $\bg$ to  $[p_\e, o_\e]$. 
Certainly, $(\ln(\bg))^{\prime\prime}=w_l^{\prime\prime}>0$, in other words,
$\frac{\bg^\prime}{\bg}=w_l^\prime$ is increasing, hence
it can be estimated at the endpoints $p_\e$, $o_\e$ where $\bg$ equals
to $\tau_\e+e^{r/2}$, $e^{r/2}$ so that the slopes of $\frac{\bg^\prime}{\bg}$
at $p_\e$, $o_\e$ are $\frac{1}{4}$, $\frac{1}{2}$, respectively.
Also $0<(\frac{\bg^\prime}{\bg})^\prime=
\frac{\bg^{\prime\prime}}{\bg} - (\frac{\bg^\prime}{\bg})^2$.
Hence $\frac{\bg^{\prime\prime}}{\bg} >(\frac{\bg^\prime}{\bg})^2\ge 
F(p_\e)^2=\frac{1}{16}$.
\end{proof}

\begin{prop} For each small $\e>0$ and
each $\s\in (0,\e^8)$ there is $\delta_0>0$, and
a $C^\infty$ function $g=g(r)$ depending on parameters 
$\e$, $\s$, and $\delta\in (0,\delta_0)$ such that 
\begin{itemize}
\item $g$ is positive and increasing,
\item $g(r)=\bg(r)$ if $r$ is outside the $\s$-neighborhood
of  $\{p_\e, o_\e\}$,
\item
if $r$ is in the $\s$-neighborhood of $[p_\e, o_\e]$,
then $\frac{g^{\prime\prime}}{g}> \frac{1}{25}$,
\item 
if $\e$, $\s$ are fixed, then
$g$ converges to $\bg$ in uniform $C^1$ topology as $\delta\to 0$.
\end{itemize}
\end{prop}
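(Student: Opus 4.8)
The plan is to follow the proofs of Propositions~\ref{prop: defining v} and~\ref{prop: defining h} almost verbatim. I would define $g:=\bg_{\delta,\s}$ to be the convolution smoothing of $\bg$ at the two corner points $p_\e$ and $o_\e$ provided by Lemma~\ref{lem: smoothing conv}. That lemma hands over three of the four assertions at once: $g$ is positive and increasing, $g=\bg$ outside the $\s$-neighborhood of $\{p_\e,o_\e\}$, and $g\to\bg$ in the uniform $C^1$ topology as $\delta\to0$. So the whole content is the lower bound $g''/g>\tfrac1{25}$ on the $\s$-neighborhood of $[p_\e,o_\e]$.

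I would break that region into three parts. On the portion of $[p_\e,o_\e]$ lying outside the $\s$-neighborhoods of the two endpoints one has $g=\bg$, and there the last bullet of Proposition~\ref{prop: properties of bg} already gives $g''/g=\bg''/\bg>\tfrac1{16}>\tfrac1{25}$. Near each corner I would argue exactly as in Proposition~\ref{prop: defining h}: bound the one-sided second derivatives of $\bg$ from both branches, take the smaller of the two as a lower bound for $\bg''$ on the $\s$-neighborhood of the corner, use Lemma~\ref{lem: smoothing conv} to pass (losing only a factor $1-o(1)$ as $\delta\to0$) to a lower bound on $g''$, and then divide by the upper bound $g(r)<\bg$ evaluated just beyond the corner.

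Carrying this out near $p_\e$: on the left branch $\bg=\tau_\e+e^{r/2}$, so $\bg''=\tfrac14 e^{r/2}$, which near $p_\e$ is close to $\tfrac14\tau_\e$ because $e^{p_\e/2}=\tau_\e$; on the right branch Proposition~\ref{prop: properties of bg} gives $\bg''>\tfrac1{16}\bg$, and since $\bg$ is increasing and $\bg(p_\e)=2\tau_\e$ this is at least $\tfrac18\tau_\e(1-o(1))$ as $\s\to0$. Hence $\bg''\ge\tfrac18\tau_\e(1-o(1))$ on a small neighborhood of $p_\e$, while $g<\bg(p_\e+2\s)\to 2\tau_\e$, so $g''/g>\tfrac1{16}(1-o(1))>\tfrac1{25}$ once $\s$ and $\delta$ are small. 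Near $o_\e$ the roles reverse: on the right branch $\bg$ coincides with the function $h$ of Proposition~\ref{prop: defining h}, which equals $e^{r/2}$ on $[o_\e,o_\e+1]$, so $\bg''=\tfrac14 e^{r/2}$ there, close to $\tfrac14\sqrt{\tau_\e}$ since $e^{o_\e/2}=\sqrt{\tau_\e}$; on the left branch $\bg''>\tfrac1{16}\bg$ with $\bg(o_\e)=\sqrt{\tau_\e}$. Thus $\bg''\ge\tfrac1{16}\sqrt{\tau_\e}(1-o(1))$ near $o_\e$, while $g<\bg(o_\e+2\s)\to\sqrt{\tau_\e}$, giving again $g''/g>\tfrac1{16}(1-o(1))>\tfrac1{25}$.

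There is no genuine difficulty here; the only thing needing care is the bookkeeping of the one-sided second derivatives at the two corners, and making sure the chosen constant $\tfrac1{25}$ absorbs the $o(1)$ errors coming from the fact that $g\to\bg$ only in $C^1$ and not in $C^2$. This is precisely why the threshold is set comfortably below the honest bound $\tfrac1{16}$, exactly as the thresholds $1+O(\e)$, and $\e^6$, $\tfrac19$ are set in Propositions~\ref{prop: defining v} and~\ref{prop: defining h}.
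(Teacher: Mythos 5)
Your proposal is correct and follows essentially the same route as the paper: define $g:=\bg_{\delta,\s}$ via Lemma~\ref{lem: smoothing conv} (which immediately yields positivity, monotonicity, the equality $g=\bg$ away from the corners, and $C^1$-convergence), then obtain the bound on $g^{\prime\prime}/g$ from lower bounds on $\bg^{\prime\prime}$ on one-sided neighborhoods of $p_\e$, $o_\e$ and the convexity bound of Proposition~\ref{prop: properties of bg} in between. The only difference is presentational: the paper packages the corner estimates uniformly as $\bg^{\prime\prime}/\bg>F^2(p_\e-2\s)>\tfrac1{25}$, whereas you redo them with explicit $\tau_\e$-scale bounds in the style of Proposition~\ref{prop: defining h}, which amounts to the same computation.
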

\begin{proof}
We let $g:=\bg_{\delta, \s}$ be the smoothing 
of $\bg$ at $p_\e$, $o_\e$, given by Lemma~\ref{lem: smoothing conv}.
In particular, $g$ is positive and increasing,
$g=\bg$ is outside the $\s$-neighborhood of $\{p_\e, o_\e\}$,
and $g$ converges to $\bg$ uniformly in $C^1$ topology as $\delta\to 0$.
Suppose $r\in [p_\e-\s, p_\e]$. 
Since $\left(\frac{\bg^{\prime}}{\bg}\right)^\prime=F^\prime>0$,  we get
\[
\frac{\bg^{\prime\prime}}{\bg}>
\left(\frac{\bg^{\prime}}{\bg}\right)^2= F^2>F^2(p_\e-2\s)>\frac{1}{25}
\]
for small $\s$. 
By Proposition~\ref{prop: properties of bg} the same lower bound
holds on $[p_\e, o_\e]$, 
i.e. $\frac{\bg^{\prime\prime}}{\bg}>F^2(p_\e-2\s)>\frac{1}{25}$
for small $\s$. 
Finally, if $r\in [o_n, o_n+\s]$, then 
$\frac{\bg^{\prime\prime}}{\bg}=\frac{1}{4}> F^2(p_\e-2\s)$ for small $\s$.
Thus by Lemma~\ref{lem: smoothing conv} we have 
$\frac{g^{\prime\prime}}{g}>\frac{1}{25}$ all small $\s, \delta$,
and $r$ in the $\s$-neighborhood of $[p_\e, o_\e]$.
\end{proof}

\begin{thm} \label{thm: A-reg}
For any sufficiently small positive $\e$
the metric $\l_{v, g}$ is $A$-regular, and
there are positive $\s, \delta$ such that $\sec(\l_{v,g})<0$. 
\end{thm}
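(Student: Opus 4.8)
The plan is to follow the same architecture as the proof of Theorem~\ref{thm: main curv estimates}, treating separately the region $r\ge o_\e$, where $\l_{v,g}$ coincides with $\l_{v,h}$ and negativity of the curvature is already known, and the region $r\le o_\e$, where $v=\e e^r$ and $g$ is built from $\bg$. First I would record that for $r\ge o_\e$ nothing changes: by construction $g=h$ there, so $K(\l_{v,g})<0$ by Steps~0--5 of the previous theorem. For $r\le o_\e$ I would split further into the interval $[p_\e,o_\e]$, where $\bg$ interpolates and $\frac{g^{\prime\prime}}{g}>\frac{1}{25}$, $\frac{g^\prime}{g}\in(\frac{1}{5},\frac12)$, and the interval $(-\infty,p_\e]$, where $g=\tau_\e+e^{r/2}$ exactly. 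On each of these, $v=\e e^r$ so $\frac{v^\prime}{v}=1$, $\frac{v^{\prime\prime}}{v}=1$, and $\frac{v}{g^2}\le\frac{\e e^r}{e^r}=\e$ (since $g\ge e^{r/2}$ everywhere on $r\le o_\e$, because $\ln(\bg)$ lies above its tangent line $r/2$ at $o_\e$ by convexity, and $\tau_\e+e^{r/2}\ge e^{r/2}$). Feeding these into formulas (\ref{form: k(y_i,y_1)})--(\ref{form: r(dr,y_1, y_2, y_3)}) gives $K(Y_2,Y_1)=K(Y_3,Y_1)\le\frac{\e^2}{16}-\frac{\e}{5}<0$, $K(\ddr,Y_1)=-1$, $K(\ddr,Y_2)=-\frac{g^{\prime\prime}}{g}<-\frac{1}{25}$, $K(Y_3,Y_2)<-(\frac{g^\prime}{g})^2\le-\frac{1}{25}$, and $|\langle R(\ddr,Y_1)Y_2,Y_3\rangle|=|c_{23}|\frac{v}{g^2}|1-\frac{g^\prime}{g}|\le\e$. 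The same elementary linear-algebra bookkeeping as in Step~5 (using $d_1c_1+d_2c_2=0$, the coefficients summing to $1$, and $|d_1d_2c_0c_3|\le1$) then yields $K(C,D)\le-\frac{1}{25}+3\e<0$ for small $\e$. On $(-\infty,p_\e]$ one can be even more explicit since $g=\tau_\e+e^{r/2}$ in closed form. This establishes $\sec(\l_{v,g})<0$ for suitable $\s,\delta$.

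The bulk of the work is the $A$-regularity claim, and this is where I would spend most of the effort. The strategy is: it suffices to prove $A$-regularity separately on $r\ge o_\e+1$ and on $r\le o_\e$, since the two overlap on $[o_\e,o_\e+1]$ and a metric that is $A$-regular on each member of a finite cover by sets with bounded-geometry overlaps is $A$-regular. On $r\ge o_\e+1$ the metric $\l_{v,g}=\l_{v,h}$ and outside a compact set it is the complex hyperbolic metric, which is locally symmetric hence $A$-regular (and on the remaining compact piece any smooth metric is $A$-regular). On $r\le o_\e$ we have $v=\e e^r$ and $g=\tau_\e+e^{r/2}$ for $r\le p_\e$ and $g$ a fixed smooth interpolation on the compact band $[p_\e,o_\e]$; I would first dispatch the compact band trivially, then handle the genuinely noncompact region $r\le p_\e$ by brute force. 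There the curvature tensor components in the frame $\{\ddr,Y_1,\dots,Y_{2n-1}\}$ are given by the formulas in Appendix~\ref{sec: components-of-curv-tensor} together with (\ref{form: k(y_i,y_1)})--(\ref{form: r(dr,y_1, y_2, y_3)}), and every ingredient — $\frac{v^\prime}{v}\equiv1$, $\frac{v^{\prime\prime}}{v}\equiv1$, $\frac{g^\prime}{g}=\frac12\frac{e^{r/2}}{\tau_\e+e^{r/2}}=F(r)$, $\frac{g^{\prime\prime}}{g}$, $\frac{v}{g^2}$, and the $c_{ij}$'s, which satisfy $|c_{ij}|\le\frac12$ and (\ref{formula: sum of c_ij}) — is bounded on $(-\infty,p_\e]$. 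Covariant derivatives of $R$ are computed by differentiating these components and applying Christoffel symbols of $\l_{v,g}$ in the moving frame; the connection coefficients are themselves rational expressions in $v,g$ and their derivatives, again with all the relevant ratios ($\frac{v^\prime}{v}$, $\frac{g^\prime}{g}$, etc.) bounded, and the $r$-derivatives of $F$ and of $\frac{v}{g^2}$ are bounded functions that decay at $-\infty$. One also needs the covariant derivatives of the $c_{ij}$'s along the horizontal directions, but these are controlled exactly as in the complex hyperbolic model since the base is rescaled $\chm$. Inductively, $\nabla^kR$ is a universal polynomial in these bounded quantities and their bounded $r$-derivatives, so $\|\nabla^kR\|_{C^0}<A_k$ for a suitable sequence $\{A_k\}$.

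The main obstacle I expect is organizing the $A$-regularity computation on $r\le p_\e$ cleanly: one must verify that \emph{all} the structural data entering $\nabla^kR$ — not just $v,g$ but the frame fields, the structure constants $c_{ij}$ and their derivatives, the O'Neill $A$-tensor, and the warped-product Christoffel symbols — have uniformly bounded components together with all their derivatives in the chosen frame. The key simplification is that away from $[p_\e,o_\e]$ the functions are the \emph{explicit} $v=\e e^r$ and $g=\tau_\e+e^{r/2}$, so $\frac{v^{(j)}}{v}$ and $\frac{g^{(j)}}{g}$ are genuinely elementary bounded functions of $r$, and the fact that $v/g^2\to0$ as $r\to-\infty$ makes the horizontal-direction terms (which carry the $c_{ij}$ dependence) harmless. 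The honest content is that one reduces everything to a finite list of bounded scalar functions and invokes a Leibniz-type induction; the argument is tedious but mechanical, and no new idea beyond the formulas already assembled is required.
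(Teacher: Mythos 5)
Your $A$-regularity argument is essentially the paper's: reduce to the explicit region $r\le p_\e-\s$ where $v=\e e^r$ and $g=\tau_\e+e^{r/2}$, note that every curvature ingredient is built from the bounded functions $\frac1g$ and $F=\frac{g^\prime}{g}$ (with $F^\prime=\frac F2-F^2$, $\frac{g^{\prime\prime}}{g}=\frac F2$, $\frac{v}{g^2}=4\e F^2$), and run an induction on $\nabla^kR$ using the explicit connection coefficients in the frame $\{\ddr,Y_1,\dots,Y_{2n-1}\}$; that part is sound and matches Section~\ref{sec: A-reg}.

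The genuine gap is in the negativity estimate on the unbounded region $r\le p_\e$. There $g=\tau_\e+e^{r/2}$ with $\tau_\e>0$, so $\frac{g^\prime}{g}=F\to0$ and $\frac{g^{\prime\prime}}{g}=\frac F2\to0$ as $r\to-\infty$; the bounds $\frac{g^\prime}{g}>\frac15$ and $\frac{g^{\prime\prime}}{g}>\frac1{25}$ that you feed into (\ref{form: k(y_i,y_1)})--(\ref{form: r(dr,y_1, y_2, y_3)}) hold only on the interpolation band near $[p_\e,o_\e]$ (where $\ln\bg$ is convex with slope between $\frac14$ and $\frac12$), not on $(-\infty,p_\e]$. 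Consequently $K(\ddr,Y_2)=-\frac F2$, $K(Y_3,Y_2)$ and $K(Y_2,Y_1)$ all tend to $0$ as $r\to-\infty$, and your claimed uniform bound $K(C,D)\le-\frac1{25}+3\e$ is false; in fact no uniform negative upper bound can hold for $\l_{v,g}$, since an $A$-regular metric with $\sec\le\mathrm{const}<0$ would be negatively pinched, contradicting part (13) of Corollary~\ref{thm: intro-gr-theoretic-cor} -- this is exactly why the theorem asserts only $\sec(\l_{v,g})<0$. The region $r\le p_\e$ is therefore not ``even more explicit'' but the delicate one: all the compensating negative terms now decay like $F$ or $F^2$, so the quadratic-form bookkeeping must be weighted accordingly -- e.g.\ one absorbs $\frac F2\bigl((d_1c_2-d_2c_1)^2+d_1^2c_3^2\bigr)+\frac F2\,d_2^2c_0^2$, completes the square in $|d_1c_0|$ and $F|d_2c_3|$, and uses that the mixed term is at most $2\e F^2$ so that the cross term carries the factor $F(6\e F-2)<0$ -- yielding only a pointwise bound $K(C,D)\le M(\e,r)<0$ with $M(\e,r)\to0$ as $r\to-\infty$. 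Without this $F$-weighted version of the estimate, the proof of negativity in the new (noncompact) region does not go through as written.
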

\begin{proof}
Since the metric $\l_{v, g}$ is smooth, it is $A$-regular on
any compact subset, hence we can assume that $r\le p_n-\s$ 
so that $v=\e e^r$ and $g=\tau_\e+e^{r/2}$.

Denote $Y_0:=\ddr$. Arguing by induction on $k$, 
we shall show that for each integer $k\ge 0$
the components of $\nabla^k R$ in the frame 
$\{Y_0, Y_1, \dots, Y_{2n-1}\}$
are bounded functions of $r$ that 
have bounded derivatives with respect to $r$. 

Assume first that $k=0$.
The components of $(4,0)$-curvature tensor $R$ are sums
of sectional curvatures~\cite[Lemma 3.3.3]{Jos-3rd-ed}, 
while by (\ref{form: k(c,d)}), (\ref{form: k(c,d) non-generic}) 
the sectional curvature of any plane 
is a linear combination with constant coefficients
of terms in (\ref{form: k(y_i,y_1)})--(\ref{form: r(dr,y_1, y_2, y_3)}).
Furthermore, and this is really the key point, 
the terms in (\ref{form: k(y_i,y_1)})--(\ref{form: r(dr,y_1, y_2, y_3)})
as well as their derivatives by $r$
are obtained from the bounded (!) functions $\frac{1}{g}$ and $F$
by taking products, sums, and multiplying by real numbers; indeed we have:
\begin{eqnarray*}
\frac{v^\prime}{v}=1=\frac{v^{\prime\prime}}{v}
\ \ \ \ \text{and}\ \ \ \frac{g^\prime}{g}=F\ \ \ \text{and}\ \ \ 
F^\prime=\frac{F}{2}-F^2\\
\frac{g^{\prime\prime}}{g}=\frac{F}{2}\ \ \
\ \ \ \ \text{and}\ \ \ \ 
\frac{v}{g^2}=4\e F^2\ \ \ \ \text{and}\ \ \ \  
\left(\frac{1}{g^2}\right)^\prime=-2\frac{F}{g^2}.
\end{eqnarray*} 
It follows that the components of $R$ and their derivatives
are linear combinations of terms that are products of the 
functions $\frac{1}{g}$ and $F$, and hence are constant on any $r$-tube
and bounded in $r$.

For the induction step, we fix $k$ and let $S:=\nabla^k R$. 
The components of the tensor $\nabla S$ are 
\begin{eqnarray}
\label{form: derivative tensor}
\qquad Y_{i_0}(S(Y_{i_1},\dots, Y_{i_l}))-
\sum_{k=1}^l 
S(Y_{i_1},\dots, Y_{i_{k+1}},\nabla_{Y_{i_0}}Y_{i_k}, 
Y_{i_{k+1}}\dots, Y_{i_l}) 
\end{eqnarray} 
As discussed in~\cite[Appendix C]{Bel-rh-warp},
it follows from~\cite[Section 6]{BW} that 
\begin{eqnarray*}
& \nabla_\ddr \ddr=0=\nabla_\ddr Y_k\ \ \text{for $k\ge 1$,}\\ 
& \nabla_{Y_i} \ddr=\frac{g^\prime}{g}Y_i=FY_i\quad \text{for $i>1$,}\\
& \nabla_{Y_1} \ddr=\frac{v^\prime}{v}Y_1=Y_1
\end{eqnarray*}
By Section~\ref{sec: basis} one has
$[Y_i, Y_j]=c_{ij}\frac{v}{h^2}Y_1$ for $i, j>1$, and $[Y_i, Y_1]=0$
(at the point $z$ where we compute the curvature).
Plugging this into Koszul's formula~\cite[Appendix C]{Bel-rh-warp}
we compute $\nabla_{Y_k} Y_l$ as follows:
%
\begin{eqnarray*}
& \nabla_{Y_1} Y_1=-\frac{v^\prime}{v}\ddr=-\ddr,\quad\text{and}\quad
\nabla_{Y_i} Y_i =-\frac{g^\prime}{g}\ddr\ \ \text{if $i>1$,}\\
& \nabla_{Y_i} Y_j =c_{ij}\frac{v}{2g^2}Y_1\ \ \text{if $i,j>1$ are distinct}.
\end{eqnarray*}
So if $Y_{i_0}=\ddr$, then 
the induction hypothesis implies that the component
(\ref{form: derivative tensor}) is bounded, being a linear combination with
bounded coefficients of terms $S(Y_{i_1},\dots, Y_{i_l})$ or their derivatives.

If $Y_{i_0}\neq\ddr$, then by induction hypothesis
$S(Y_{i_1},\dots, Y_{i_l})$ is constant on $r$-tubes, so
$Y_{i_0}(S(Y_{i_1},\dots, Y_{i_l}))=0$, and again the remaining terms
are bounded by the induction hypothesis.

Thus the metric $\l_{v, g}$ is $A$-regular.
Next we show that $\sec(\l_{v,g})<0$ following the pattern of the proof of
Theorem~\ref{thm: main curv estimates}.
We only consider the generic case with the curvature given by
(\ref{form: k(c,d)}); the non-generic case is even easier 
because the mixed term in not present in (\ref{form: k(c,d) non-generic}). 

{\bf Step 1.} Suppose $r\in [o_\e, o_\e+\s]$ so that $v(r)=\e e^r$ and 
$\bg(r)=e^{r/2}$ and $g$ converges to $\bg$ in $C^1$-topology
as $\delta\to 0$, and furthermore,
$\frac{g^{\prime\prime}}{g}>\frac{1}{25}$ for small $\s$.
Hence $\frac{\bg^\prime}{\bg}=\frac{1}{2}$ and 
$\frac{v}{\bg^2}=\e$ implying 
$\frac{g^\prime}{g}>\frac{1}{3}$ and 
$\frac{v}{g^2}<2\e$.

Plugging into formulas 
(\ref{form: k(y_i,y_1)})--(\ref{form: r(dr,y_1, y_2, y_3)})
we get the following: 
\begin{eqnarray*} 
& 
K (Y_2,Y_1)=K(Y_3,Y_1)<
\frac{\e^2}{4}-\frac{1}{3}<-\frac{1}{4},\\
& 
K (Y_3,Y_2)<-\left(\frac{g^\prime}{g}\right)^2\le -\frac{1}{9},\\
& 
K(\ddr,Y_1)=-1, \ \ \ \ \
K(\ddr,Y_2)<-\frac{1}{25},\\
& 
|\langle R(\ddr , Y_1) Y_2, Y_3\rangle|\le 
|c_{23}|2\e(1-\frac{1}{3})< \e,
\end{eqnarray*}
and we finish as in Step $5$ of Theorem~\ref{thm: main curv estimates}.

{\bf Step 2.}
Suppose $r\in [p_\e-\s, o_\e]$ so that $v(r)=\e e^r$ and 
$\frac{\bg^\prime}{\bg}$ 
in increasing, and $g$ converges to $\bg$ in $C^1$-topology
as $\delta\to 0$, and also
$\frac{g^{\prime\prime}}{g}>\frac{1}{25}$ for small $\delta$.
As $\frac{\bg^\prime}{\bg}> F(p_n-\s)>\frac{1}{5}$ for small $\s$, 
we get $\frac{g^\prime}{g}>\frac{1}{5}$ and
$\frac{v\prime}{v}-\frac{g^\prime}{g}<\frac{4}{5}$
for small $\delta, \s$.

Since $(\ln(\bg))^{\prime\prime}>0$ on $[-\infty, p_\e]$ and $[p_\e, o_\e]$,
Lemma~\ref{lem: strict convex}, implies that $\ln(\bg)$ is strictly
convex on $[p_\e-\s, o_\e]$, hence the graph of $\ln(\bg)$
is above its tangent line at $o_\e$, i.e. $\ln(\bg(r))\ge r/2$,
so that $\bg(r)\ge e^{r/2}$. 
It follows that
$\frac{v}{\bg^2}\le \frac{\e e^r}{e^r}=\e$
so that $\frac{v}{g^2}<2\e$ for small $\delta$.
The above estimates combined with formulas 
(\ref{form: k(y_i,y_1)})--(\ref{form: r(dr,y_1, y_2, y_3)})
imply the following:
\begin{eqnarray*} 
& 
K (Y_2,Y_1)=K(Y_3,Y_1)<
\frac{\e^2}{4}-\frac{1}{5}<-\frac{1}{6},\\
& 
K (Y_3,Y_2)<-\left(\frac{g^\prime}{g}\right)^2< -\frac{1}{25},\\
& 
K(\ddr,Y_1)=-1, \ \ \ \ \
K(\ddr,Y_2)<-\frac{1}{25},\\
& 
|\langle R(\ddr , Y_1) Y_2, Y_3\rangle|\le 
|c_{23}|2\e\frac{4}{5}< \e,
\end{eqnarray*}
and we finish as in Step $5$ of Theorem~\ref{thm: main curv estimates}.

{\bf Step 3.} Suppose $r\le p_\e-\s$ so that $v(r)=\e e^r$ and
$g=\tau_\e+e^{r/2}$. We compute
that $\frac{g^\prime}{g}=F$, and $\frac{v}{g^2}=4\e F^2$, and
$\frac{g^{\prime\prime}}{g}=F/2$,
and deduce the following:
\begin{eqnarray*} 
& 
K (Y_2,Y_1)=K(Y_3,Y_1)=\e^2F^4-F<F(\e^2-1)<-\frac{F}{2},\\
& 
K (Y_3,Y_2)<-\left(\frac{g^\prime}{g}\right)=-F^2 \\
& 
K(\ddr,Y_1)=-1, \ \ \ \ \
K(\ddr,Y_2)=-\frac{F}{2},\\
& 
|\langle R(\ddr , Y_1) Y_2, Y_3\rangle|<
|c_{23}|4\e F^2\le 2\e F^2.
\end{eqnarray*}
Thus $K(C , D)+\frac{F}{2}\left((d_1c_2-d_2c_1)^2 + d_1^2c_3^2\right)+
\frac{F}{2} d_2^2c_0^2$ 
is bounded above by
\begin{eqnarray*}
&  
-d_1^2c_0^2 
-F^2d_2^2c_3^2 +6\e F^2 |d_1d_2c_0c_3|=\\
& \nonumber
-(|d_1c_0|-F|d_2c_3|)^2+
|d_1d_2c_0c_3|F(6\e F-2),
\end{eqnarray*}
in which every summand is nonpositive. Then the argument of 
Step $5$ of Theorem~\ref{thm: main curv estimates}
gives a function $M(\e,r)$ with
$K(C,D)\le M(\e,r)<0$ for all $C, D$, where 
$M(\e,r)\to 0$ as $r\to-\infty$ because e.g. 
$K(\ddr,Y_2)=-\frac{F}{2}\to 0$ as $r\to-\infty$.
\end{proof}

\section{Proof of Theorem~\ref{thm: main thm}}
\label{sec: proof of main thm}

Let $U$ be the intersection of $M\setminus S$ and a small
tubular neighborhood of $S$ 

To prove (iii), equip each component of $U$ with the 
negatively curved $A$-regular metric given by 
Theorem~\ref{thm: A-reg}. By construction the metric
extends the complex hyperbolic metric on $M\setminus U$,
and it is clearly negatively curved and $A$-regular, because so
is the complex hyperbolic metric. The metric has finite volume
by~\cite[Remark 3.3]{Bel-rh-warp}, and is complete because
outside some compact set it has the form $(I\times F, dr^2+g_r)$, 
where $I=[a,\infty)$, and $F$ is compact, namely
a circle bundle over a closed complex hyperbolic manifold.
Completeness can be checked outside a compact set, and 
$(I\times F, dr^2+g_r)$ is complete as
the total space of Riemannian submersion with compact fiber 
and complete base.

To prove (i), equip each component of $U$ with the 
negatively curved metric given by 
Theorem~\ref{thm: main curv estimates}.
By construction the metric
extends the complex hyperbolic metric on $M\setminus U$,
and its sectional curvature is clearly bounded above by a negative
constant. The metric has finite volume
by~\cite[Remark 3.3]{Bel-rh-warp}, and is complete
by the previous paragraph.

This metric will be used in proving (ii). 
Each end of $M\setminus S$ has a 
{\it cusp neighborhood $E$} which by definition means that 
$E$ admits a Riemannian submersion onto $(-\infty, 0]$, and
there exists a constant $K$ such that the ``holonomy'' diffeomorphism
$h_t$ from the fiber over $\{0\}$ to the fiber over $\{t\}$ 
is $K$-Lipschitz for each $t$.
(Indeed, each end of $M\setminus S$
corresponding to a cusp of $M$ has a neighborhood with warped product metric
$dr^2+f_r$ where $f_r$ is an almost flat metric
on the cusp cross-section, and the ``holonomy'' diffeomorphism
$h_t$ is $1$-Lipschitz by exponential convergence of geodesics.
Each end of $M\setminus S$ that approaches $S$ has a neighborhood
with metric
$dr^2+v^2d\theta^2+h^2 {\bf k}^{n-1}$.
In either case the $r$-coordinate projection is a Riemannian submersion
with compact fibers, and $h_t$ is $1$-Lipschitz as $v, h$ are 
increasing.) 
Then by~\cite[Theorem 4.2]{Bel-rh-warp} the group $\pi_1(M\setminus S)$
is hyperbolic relative to the fundamental groups of the ends of
$M\setminus S$.

\section{Proof of Corollary~\ref{thm: intro-gr-theoretic-cor}}
\label{sec: appl}
 
Most of the assertions are proved verbatim as 
in~\cite[Theorem 1.1]{Bel-rh-warp} 
with the following exceptions. 

(4) The claim follows from Theorem~\ref{thm: main thm}
and the Dehn Surgery Theorem for relatively hyperbolic 
groups~\cite{Osi-periph} (cf.~\cite{GroMan}) 
provided all peripheral subgroups are fully 
residually hyperbolic, 
i.e. if $H$ is peripheral, then for any finite subset $F\subset H$  
there is a homomorphism of $H$ onto a non-elementary hyperbolic group  
that is injective on $F$. 
Finitely virtually nilpotent subgroups are residually finite, 
hence fully residually hyperbolic.
Thus we can assume that $H$ maps onto a non-elementary
hyperbolic group with infinite cyclic kernel.  
Let $z$ generate the kernel. It suffices to
check that any finite subset $F$ is mapped injectively 
into $H/\langle z^n\rangle$ for {\it some} $n$, because the latter group
is finite-by-hyperbolic, and hence hyperbolic. 
If not, then for any $n$ there exist distinct
$s_n, s_n^\prime\in S$
that get identified in $Q_n$. Since $\langle z^n\rangle$ is the kernel,
$s_n, s_n^\prime$ we have 
$s_n^\prime=s_n z^{n k_n}$ for some integer $k_n\neq 0$.
But $S$ is finite, so only finitely many elements of $\langle z^n\rangle$
are obtained this way, i.e. $n k_n$ is a bounded sequence, 
which forces $k_n=0$ for large $n$ and gives a contradiction.

(5)
A group satisfies the {\it Strong Tits Alternative} 
if any subgroup either contains a nonabelian free group or 
is virtually abelian.
Tukia~\cite{Tuk} proved the following Tits Alternative for relatively 
hyperbolic groups: a subgroup that does not contain a non-abelian
free subgroup is either finite, or virtually-$\Z$, or lies in
a peripheral subgroup. Thus it suffices to check
the Strong Tits alternative for the peripheral subgroups.
If $M$ is compact, this is proved 
in~\cite[Theorem 1.1(6)]{Bel-rh-warp}, while if $M$ is noncompact,
then there exists a virtually nilpotent peripheral subgroup
that is not virtually abelian.

(6)
According to~\cite{Reb} a relatively hyperbolic group is biautomatic 
provided its peripheral subgroups are biautomatic. 
Virtually central extensions of hyperbolic groups
are biautomatic~\cite{NeuRee}.
Polycyclic subgroups of  a biautomatic group is virtually 
abelian~\cite{GerSho}, and in particular this applies to
finitely generated nilpotent groups,  
so we have to assume $M$ is compact.

(8)
$\pi_1(N)$ is not $CAT(0)$ even when $M$ is compact because centralizers
need not virtually split (and
for the same reason $\pi_1(N)$ does not act by semisimple
isometries on a $CAT(0)$ space)~\cite[Theorem 1.1 (iv), page 439]{BH}.
Indeed, consider any peripheral subgroup $H$ that is an extension
with hyperbolic quotient and infinite cyclic kernel generated by $z$.
Since $H$ is peripheral, the centralizer of $z$ in $\pi_1(N)$
lies in $H$, and hence coincides with $H$.
If the extension virtually splits, then the circle bundle 
would have the zero real first Chern class because 
if it were nonzero it would not vanish in a finite cover,
so this possibility is ruled out by the following.

\begin{lem}\label{lem: kahler form}
If $S$ is a compact totally geodesic complex $(n-1)$-submanifold
of a complete complex hyperbolic $n$-manifold $M$, then
the first Chern class of the normal bundle $\nu$ of $S$ in $M$
is nontrivial in real cohomology.
\end{lem}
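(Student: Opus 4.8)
The plan is to show that, in de Rham cohomology, $c_1(\nu)$ is a nonzero constant multiple of the Kähler class of $S$, and then to use that this class is nonzero because $S$ is a compact Kähler manifold of positive dimension $n-1$. I would first equip the complex line bundle $\nu$ with the Hermitian metric induced from $M$ and with the Hermitian connection $\nabla^\perp$ obtained by orthogonally projecting the Levi-Civita connection of $M$ onto the normal bundle (this is metric and, since $M$ is Kähler and $TS$ is $J$-invariant, also $J$-linear). For any such connection the Chern--Weil form $\tfrac{i}{2\pi}\,\mathrm{tr}\,R^{\nabla^\perp}$ represents $c_1(\nu)$ in $H^2(S;\mathbb R)$. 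Because $\nu$ has rank one, with fibre over $p$ the real $2$-plane $N_pS$ spanned by a unit normal $\xi$ and $J\xi$ (and $J$ acting on $\nu_p$ as multiplication by $i$), the curvature endomorphism $R^{\nabla^\perp}(X,Y)$ is multiplication by $i\,\beta(X,Y)$ for a real $2$-form $\beta(X,Y)=\langle R^{\nabla^\perp}(X,Y)\xi,J\xi\rangle$ on $S$; thus $c_1(\nu)=\bigl[-\tfrac{1}{2\pi}\beta\bigr]$, and the problem reduces to computing $\beta$.

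Next, since $S$ is totally geodesic its second fundamental form vanishes, so the Ricci equation of submanifold theory gives $\langle R^{\nabla^\perp}(X,Y)\xi,J\xi\rangle=\langle R(X,Y)\xi,J\xi\rangle$, where $R$ is the curvature tensor of $M$. As $M$ is locally isometric to $\chn$, this tensor is given by the explicit formula recalled in Section~\ref{sec: curv of chn} (see also~\cite{KN-II}). I would evaluate it with $Z=\xi$, $W=J\xi$; using that $S$ is a complex submanifold, so that $T_pS$ and hence $N_pS$ is $J$-invariant, we have $\langle X,\xi\rangle=\langle X,J\xi\rangle=\langle Y,\xi\rangle=\langle Y,J\xi\rangle=0$ for $X,Y$ tangent to $S$, and all terms of the formula vanish except $2\langle X,JY\rangle\langle\xi,J(J\xi)\rangle=-2\langle X,JY\rangle$. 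Hence
\[
\beta(X,Y)=\langle R(X,Y)\xi,J\xi\rangle=-\tfrac12\langle X,JY\rangle,
\]
which is $\pm\tfrac12$ times the Kähler form $\omega_S$ of the complex hyperbolic metric ${\bf k}^{n-1}$ on $S$. Therefore $\beta$, and with it $c_1(\nu)$, is a nonzero real multiple of $[\omega_S]\in H^2(S;\mathbb R)$; keeping track of the conventions yields the value $-\tfrac{1}{4\pi}[\omega_S]$ quoted in the introduction.

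It remains to note that $[\omega_S]\ne 0$. Since $S$ is a compact complex manifold of complex dimension $n-1\ge 1$, Kähler with Kähler form $\omega_S$, the integral $\int_S\omega_S^{\,n-1}$ is a positive multiple of $\mathrm{vol}(S)$, hence nonzero; so $[\omega_S]^{\,n-1}\ne 0$ in $H^{2(n-1)}(S;\mathbb R)$, whence $[\omega_S]\ne 0$ and $c_1(\nu)\ne 0$ in real cohomology. (If $S$ is disconnected, one applies this to each component.) The only step requiring care, and the one I expect to be the main obstacle, is the curvature bookkeeping — and, if one wants the precise constant, the sign conventions — both of which are routine; alternatively, one could bypass the computation entirely by combining the adjunction formula with the fact that $M$ and its totally geodesic complex hypersurface $S$ are Kähler--Einstein with Einstein constants proportional to $n+1$ and to $n$, which forces $c_1(\nu)=c_1(K_S)-c_1(K_M)|_S$ to be a nonzero multiple of $[\omega_S]$.
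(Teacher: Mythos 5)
Your proof is correct, but it takes a genuinely different route from the paper's. You compute $c_1(\nu)$ directly by Chern--Weil: equip $\nu$ with the induced Hermitian metric and normal connection, use the Ricci equation together with the vanishing of the second fundamental form (as $S$ is totally geodesic) to identify the normal curvature with the ambient curvature, and then evaluate the explicit curvature tensor of $\chn$ (the same formula from \cite[Proposition IX.7.3]{KN-II} used in Section~\ref{sec: curv of chn}) on $(X,Y,\xi,J\xi)$, obtaining $\beta(X,Y)=-\tfrac12\langle X,JY\rangle$, a nonzero multiple of the K\"ahler form of $S$. The paper never touches the normal connection: it applies the Whitney sum formula $c_1(\nu)=c_1(i^\#TM)-c_1(TS)$ and uses that $2\pi c_1$ is represented by the Ricci form, which for the complex hyperbolic metrics on $M$ and on $S$ is the $-\tfrac{n+1}{2}$-, respectively $-\tfrac{n}{2}$-, multiple of the K\"ahler form; subtracting gives $2\pi c_1(\nu)=-\tfrac12\,\omega_S$. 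Both arguments then conclude from the nontriviality of the K\"ahler class of the compact K\"ahler manifold $S$ (implicitly using $n>1$, as the paper does). Note that your closing ``alternative'' via adjunction and the K\"ahler--Einstein constants is essentially the paper's proof. What your main route buys is a self-contained computation, given the curvature formula already in the paper, which also produces the constant $-\tfrac{1}{4\pi}$ quoted in the introduction; what the paper's route buys is the complete avoidance of curvature bookkeeping, at the cost of citing the Ricci-form representation of $c_1$ and the Einstein constants. Your looseness about the overall sign ($\pm\tfrac12\,\omega_S$) is harmless, since only nonvanishing is needed.
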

\begin{proof}
To see that the circle bundle has nonzero real first Chern class,
look at the normal bundle $\nu$ of $S$ in $M$ and
note that by Whitney sum formula $c_1(\nu)$ is the difference between 
first Chern classes of $i^\#TM$ and $TS$ where $i\co S\to M$ is 
the inclusion. But $2\pi c_1$ is represented 
by the Ricci form~\cite[2.75]{Bes},
which equals to $-\frac{n+1}{2}$-multiple of the K\"ahler 
form~\cite[Remark after Theorem IX.7.5]{KN-II}
of the complex hyperbolic metric.
Since $S$, $M$ are complex hyperbolic, the K\"ahler form of $M$
restricts to the K\"ahler form of $S$. One then computes that 
$2\pi c_1(\nu)$ is represented by $-\frac{1}{2}$-multiple of
the K\"ahler form of $S$. Since $S$ is compact, the K\"ahler
class is nontrivial.
\end{proof}

(15) Farb~\cite{Far-relhyp} (see also~\cite{Osi-relhyp})  
proved that a relatively hyperbolic group has solvable word problem,
provided each peripheral subgroup has solvable word problem. 
Bumagin~\cite{Bum-conj} proved the same
for the conjugacy problem. 
Virtually central extensions of hyperbolic groups
are biautomatic~\cite{NeuRee}, in particular, they have solvable
word problem, and also solvable conjugacy problem~\cite{GerSho-autom}.
Finitely generated virtually nilpotent groups
are polycyclic-by-finite, and hence 
they are conjugacy separable~\cite{Rem, For},
which implies that they have 
solvable conjugacy problem~\cite{Mos}. 
Finally, polycyclic-by-finite groups have
solvable word problem because the property of having solvable 
word problem is preserved under extensions of finitely presented 
groups~\cite[Lemma 4.7]{Mil}.

\appendix

\section{Bending and smoothing convex functions}
\label{sec: appendix smooth convex}

This paper relies on delicate warped
product constructions, and I find it worthwhile to summarize
some elementary results on bending and smoothing convex functions. 

To avoid confusion we note that
in this paper a function $f$ is called {\it strictly convex}
if $f(tx+(1-t)y)<tf(x)+(1-t)f(y)$ for all $x\neq y$ and $t\in (0,1)$.
For example, if $f^{\prime\prime}>0$ everywhere, 
then $f$ is strictly convex, while the converse is not true:  
near $x=0$ the function $f(x)=x+x^4$ is increasing strictly 
convex, yet $f^{\prime\prime}(0)=0$.
Similarly, $f$ is called {\it strictly
concave} if $-f$ is strictly convex.

Lemma~\ref{lem: smoothing conv} 
modifies an argument of Ghomi~\cite{Gho} by keeping track
of the first and second derivative of the smoothing.

\begin{lem} \label{lem: smoothing conv}
Suppose that $f\co [a,b]\to\mathbb R$ is a 
positive continuous function such that 
for some $c\in (a,b)$ the restrictions of $f$ to $[a,c]$,
$[c,b]$ are $C^2$ with $f^{\prime\prime}>k$, and 
$f^\prime(c_-)\le f^\prime(c_+)$. 
Then \newline
\textup{(1)} for each small $\delta,\s >0$ there is a $C^2$ function 
$f_{\delta,\s}\co [a,b]\to\mathbb R$ that coincides with $f$
outside the $\s$-neighborhood of $c$, 
and satisfies $f_{\delta,\s}^{\prime\prime}>k$ on $[a,b]$.
\newline
\textup{(2)} if $f$ is $C^l$ with $0\le l\le \infty$ near $x\in [a,b]$, 
then $f_{\delta,\s}$ 
is $C^l$ near $x$, and $f_{\delta,\s}$ 
converges to $f$ in the uniform $C^l$-topology near $x$
as $\delta\to 0$. If $f$ is $C^\infty$ away from $c$, then
$f_{\delta, \s}$ is $C^\infty$ on $[a,b]$.
\newline 
\textup{(3)} If $f$ is increasing and $\delta$ is small enough, 
then $f_{\delta,\s}^\prime>0$. 
\end{lem}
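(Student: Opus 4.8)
The plan is to build $f_{\delta,\s}$ by a localized mollification carried out after first peeling off the corner of $f$ at $c$. Set $j:=f^\prime(c_+)-f^\prime(c_-)\ge 0$ and $f_1:=f-j\,(x-c)_+$, where $(x-c)_+:=\max\{x-c,0\}$. On $[a,c]$ we have $f_1=f$, and on $[c,b]$ we only subtracted a linear function, so the restrictions of $f_1$ to the two half-intervals are still $C^2$ with $f_1^{\prime\prime}=f^{\prime\prime}>k$; moreover $f_1^\prime(c_+)=f^\prime(c_+)-j=f^\prime(c_-)=f_1^\prime(c_-)$, so $f_1$ is $C^1$ on $[a,b]$. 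Thus near $c$ the function $f_1$ is $C^{1,1}$ with bounded second derivative — a jump at $c$ of unrestricted sign, but everywhere $>k$ — and this is the form in which ordinary mollification behaves well.

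Next, fix a standard mollifier $\phi_\delta\ge 0$ with $\int\phi_\delta=1$ and $\operatorname{supp}\phi_\delta\subset[-\delta,\delta]$, and a fixed cut-off $\chi\in C^\infty$ with $0\le\chi\le 1$, $\chi\equiv 1$ near $c$, and $\operatorname{supp}\chi\subset(c-\s,c+\s)$. Put $\tilde f_1:=f_1+\chi\,(f_1*\phi_\delta-f_1)$, which equals $f_1$ off $\operatorname{supp}\chi$, is $C^\infty$ where $\chi\equiv 1$, inherits the smoothness of $f_1$ elsewhere, converges to $f_1$ in $C^l$ near any point where $f_1$ is $C^l$, and is globally $C^\infty$ if $f_1$ is $C^\infty$ off $c$. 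The point that needs genuine care is the bound $\tilde f_1^{\prime\prime}>k$. Differentiating,
\[
\tilde f_1^{\prime\prime}=(1-\chi)f_1^{\prime\prime}+\chi\,(f_1*\phi_\delta)^{\prime\prime}
+2\chi^\prime\big((f_1*\phi_\delta)^\prime-f_1^\prime\big)
+\chi^{\prime\prime}\big(f_1*\phi_\delta-f_1\big),
\]
and on $\operatorname{supp}\chi$ the first two terms are a weighted average of the values of $f_1^{\prime\prime}$ over $[c-\s-\delta,c+\s+\delta]$, hence $\ge k+\mu$ with $\mu>0$ depending only on $\s$ once $\delta$ is small. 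Since $f_1\in C^{1,1}$ near $c$ one has $\|f_1*\phi_\delta-f_1\|_{C^0}=O(\delta^2)$ and $\|(f_1*\phi_\delta)^\prime-f_1^\prime\|_{C^0}=O(\delta)$, so the last two terms are $O(\delta/\s)+O(\delta^2/\s^2)$, hence $<\mu$ for $\delta$ small relative to $\s$; thus $\tilde f_1^{\prime\prime}>k$ on $[a,b]$. I expect this to be the main obstacle: the corner of $f$ had to be stripped first precisely so that the mollification errors are $O(\delta)$ in $C^1$ and $O(\delta^2)$ in $C^0$, which is what lets them beat the $\s^{-1}$ and $\s^{-2}$ factors produced by differentiating the cut-off (a naive mollification of $f$ itself has an $O(1)$ error in $f^\prime$ at the jump and fails here).

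It remains to reinstate a smoothed corner. Let $\beta_\delta:=(x)_+*\phi_\delta$; this is $C^\infty$, convex with $\beta_\delta^{\prime\prime}=\phi_\delta\ge 0$ and $0\le\beta_\delta^\prime\le 1$, equals $(x)_+$ for $|x|\ge\delta$, and $\to(x)_+$ as $\delta\to 0$. For $\delta<\s$ define
\[
f_{\delta,\s}:=\tilde f_1+j\,\beta_\delta(\,\cdot\,-c).
\]
Then $f_{\delta,\s}$ is $C^2$ (and $C^\infty$ near $c$), and since $j\ge 0$, $\beta_\delta^{\prime\prime}\ge 0$, we get $f_{\delta,\s}^{\prime\prime}\ge\tilde f_1^{\prime\prime}>k$, which is (1). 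Off the $\s$-neighbourhood of $c$ both modifications vanish, so $f_{\delta,\s}=f_1+j(x-c)_+=f$ there; near a point $x\ne c$ one has $\beta_\delta(\,\cdot\,-c)=(\,\cdot\,-c)_+$ once $\delta<|x-c|$, hence $f_{\delta,\s}=f+\chi(f_1*\phi_\delta-f_1)$ near $x$ and the smoothness and $C^l$-convergence in (2) follow from those for $\tilde f_1$; near $c$ itself, if $f$ is $C^l$ there with $l\ge 1$ then necessarily $j=0$, so $f_{\delta,\s}=f+\chi(f*\phi_\delta-f)$ near $c$ and (2) reduces to standard mollifier estimates (the case $l=0$ being immediate). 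For (3), in the relevant case $k\ge 0$ one has $f^{\prime\prime}>0$, so $f^\prime$ is strictly increasing on each half; then on the compact set $\operatorname{supp}\chi$ the function $f_1^\prime$ is positive (left of $c$ it is $f^\prime>0$, and right of $c$ it is $f^\prime-j> f^\prime(c_+)-j=f^\prime(c_-)>0$), whence $f_{\delta,\s}^\prime=\tilde f_1^\prime+j\beta_\delta^\prime(\,\cdot\,-c)\ge f_1^\prime+O(\delta)>0$ for $\delta$ small.
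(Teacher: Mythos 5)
Your proof is correct, but it runs on a different engine than the paper's. The paper reduces to convexity: it subtracts $k(x-c)^2/2$ to get a function $q$ with $q^{\prime\prime}>0$ on each half and an upward kink, invokes the gluing lemma for strict convexity across the kink (Lemma~\ref{lem: strict convex}), and then uses that convolution preserves strict convexity together with a \emph{double} convolution (Lemma~\ref{lem: 2nd der is >0}) to force $q_{\theta_\delta\theta_\delta}^{\prime\prime}>0$; the transition zone of the cutoff is handled simply by $C^2$-convergence to $f$ there, since $f$ is $C^2$ away from $c$. You instead strip the corner by subtracting $j(x-c)_+$, so that the remaining function $f_1$ is $C^{1,1}$ with $f_1^{\prime\prime}\ge k+\mu$ a.e.\ (the uniform gap $\mu>0$ coming from compactness of the two closed halves), mollify with a cutoff and beat the $\chi^\prime,\chi^{\prime\prime}$ error terms by explicit $o(1)$ estimates, and then add back the smoothed ramp $j\beta_\delta$, whose second derivative is $\ge 0$ precisely because $f^\prime(c_-)\le f^\prime(c_+)$. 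So the sign hypothesis on the corner enters for you through $j\ge 0$, whereas in the paper it enters through convexity of $q$; your route avoids the two auxiliary convexity lemmas and the double-convolution trick, at the price of the quantitative bookkeeping in the transition zone (which is where your observation that naive mollification of $f$ itself fails is exactly the right diagnosis). Two minor caveats. First, the $O(\delta^2)$ bound on $\|f_1*\phi_\delta-f_1\|_{C^0}$ needs a symmetric mollifier, which you did not require; this is harmless, since the $O(\delta)$ bound valid for any mollifier already makes the $\chi^\prime,\chi^{\prime\prime}$ terms vanish as $\delta\to 0$ with $\s$ fixed, which is all the argument (and the lemma as used) needs. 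Second, you prove (3) only for $k\ge 0$, using $f^{\prime\prime}>0$ to get $f_1^\prime>0$ on $\operatorname{supp}\chi$; for $k<0$ your $f_1$ can fail to be monotone to the right of $c$ and that particular argument breaks. Since every invocation of the lemma in the paper has $k\ge 0$, and since the paper's own proof of (3) is no more careful (it implicitly reads ``increasing'' as having positive derivative away from the modified region), this restriction does not affect any application, but it is a genuine narrowing of the stated claim and worth flagging.
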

\begin{proof} Let $\phi_\delta\co\mathbb R\to [0,1]$ 
be a smooth (bump) function with support within $(-\delta, \delta)$, 
and such that $\phi_\delta=1$ on $[-\delta/2, \delta/2]$. 
We use the following notations:
\[
\theta_{\delta}:=\frac{\phi_\delta}{\int_\mathbb R\phi_\delta}\ \ \
\text{and}\ \ \
g_{\theta_\delta}(x):=\int_\mathbb R g(x-y)\theta_\delta(y)dy=
\int_\mathbb R \theta_\delta(x-y)g(y)dy.
\]  
It is well known (see e.g. \cite[Theorem 2.3]{Hirsch-diff-top})
that $g_{\theta_\delta}$ is $C^\infty$, and 
if $g$ is $C^m$ with $0\le m\le\infty$,
then $g_{\theta_\delta}$ converges to $g$ in the uniform $C^m$-topology 
on $[a,b]$ as $\delta\to 0$; also if $m\ge 2$, then
convolution and differentiation commute,
so the $m$-th derivative of $g_{\theta_\delta}$
satisfies $(g_{\theta_\delta})^{(m)}=(g^{(m)})_{\theta_\delta}$.

Consider
\[
f_{\delta, \s}(x):=f_{\theta_\delta\theta_\delta}(x)\phi_{\s}(c-x)+
f(x)(1-\phi_{\s}(c-x)), 
\]
where $f_{\theta_\delta\theta_\delta}$ is the convolution
of $f$, $\theta_\delta$, and $\theta_\delta$.
Note that the order in which we convolve is irrelevant because
the operation is commutative and associative. 
Thus if $f$ is $C^l$ with $0\le l\le \infty$ near $x\in [a,b]$, 
then $f_{\delta,\s}$ is $C^l$ near $x$, and $f_{\delta,\s}$ 
converges to $f$ in the uniform $C^l$-topology near $x$ as $\delta\to 0$.
Note that $f_{\delta, \s}=f$ when $|x-c|>\s$, and 
$f_{\delta,\s}=f_{\theta_\delta\theta_\delta}$ when $|x-c|<\s/2$,
in particular, if $f$ is $C^\infty$ away from $c$, then
$f_{\delta, \s}$ is $C^\infty$ everywhere, which proves (2).

Convolution with any nonnegative function
preserves increasing functions, 
hence if $f$ is increasing, then so are $f_{\theta_\delta}$, 
$f_{\theta_\delta\theta_\delta}$.
Thus $f_{\delta,\s}^\prime$ either equals to 
$f_{\theta_\delta\theta_\delta}^\prime >0$, or 
converges to $f^\prime>0$ in $C^1$ topology as $\delta\to 0$,
hence (3) is proved.

Since $f_{\delta,\s}(x)$ converges to $f$ in $C^2$ topology
outside the $\s/4$-neighborhood of $c$, as $\delta\to 0$,  we know that
$f^{\prime\prime}_\delta(x)>k$ for small $\delta$ and $|x-c|>\s/4$. 
Since $f_{\delta,\s}=f_{\theta_\delta\theta_\delta}$
for $|x-c|<\s/2$, it remains to show that 
$f_{\theta_\delta\theta_\delta}^{\prime\prime}>k$.
To this end, let $q(x):=f(x)-k\frac{(x-c)^2}{2}$. 
Since $f^{\prime\prime}>k$ on $[a,c]$, $[c, b]$, respectively, 
the restrictions of $q$ to $[a,c]$ and $[c, b]$ satisfies
$q^{\prime\prime}>0$. 
Also $q^\prime(c_-)=f^\prime(c_-)\le f^\prime(c_+)=q^\prime(c_+)$, so
by Lemma~\ref{lem: strict convex}, $q$ is strictly convex on $[a,b]$, 
hence Lemma~\ref{lem: 2nd der is >0} implies that
$q_{\theta_\delta\theta_\delta}^{\prime\prime}>0$ on $[a,b]$, but
\[
q_{\theta_\delta\theta_\delta}^{\prime\prime}=
f_{\theta_\delta\theta_\delta}^{\prime\prime}-
\left(k\frac{(x-c)^2}{2}\right)_{\theta_\delta\theta_\delta}^{\prime\prime}=
f_{\theta_\delta\theta_\delta}^{\prime\prime}-k,
\]
proving (1).
\end{proof}

\begin{lem} \label{lem: strict convex}
If $a_1<c<a_2$, and
if $f_1, f_2$ are two strictly convex $C^1$ functions defined on
$[a_1,c]$, $[c,a_2]$ respectively such that $f_1(c)=f_2(c)$, and
$f_1^\prime(c_-)\le f_2^\prime(c_+)$, then the function 
$f\co [a_1, a_2]\to \mathbb R$ 
that equals to $f_1$ on $[a_1,c]$, and to $f_2$ on $[c,a_2]$ is
strictly convex.   
\end{lem}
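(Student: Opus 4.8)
The plan is to verify the defining chord inequality directly. Given $x<y$ in $[a_1,a_2]$ and $t\in(0,1)$, put $z:=tx+(1-t)y$; we must show $f(z)<tf(x)+(1-t)f(y)$, i.e. that $(z,f(z))$ lies strictly below the chord $L$ joining $(x,f(x))$ and $(y,f(y))$. If $[x,y]\subseteq[a_1,c]$ this is immediate from strict convexity of $f_1$, and if $[x,y]\subseteq[c,a_2]$ it follows from strict convexity of $f_2$; since $c\in(a_1,a_2)$, the only remaining case is $x<c<y$, and that is where the work lies.

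In that case I would first prove the key point: $(c,f(c))$ lies \emph{strictly} below $L$, equivalently
\[
\frac{f(c)-f(x)}{c-x}<\frac{f(y)-f(c)}{y-c}.
\]
Here the left side equals $\frac{f_1(c)-f_1(x)}{c-x}$ and the right side equals $\frac{f_2(y)-f_2(c)}{y-c}$, since $f_1(c)=f_2(c)=f(c)$. For a convex $C^1$ function the secant slope over $[x,c]$ is $\le f_1'(c_-)$ and the secant slope over $[c,y]$ is $\ge f_2'(c_+)$; combined with the hypothesis $f_1'(c_-)\le f_2'(c_+)$ this already gives the inequality in the weak form. To upgrade to a strict inequality I would use strict convexity of $f_1$: for $x<z'<c$ the secant slope over $[x,c]$ is a nontrivial convex combination of the secant slopes over $[x,z']$ and $[z',c]$, which are strictly ordered (strict convexity), so the secant slope over $[x,c]$ is \emph{strictly} smaller than that over $[z',c]$, and the latter is $\le f_1'(c_-)$. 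Hence $\frac{f_1(c)-f_1(x)}{c-x}<f_1'(c_-)\le f_2'(c_+)\le\frac{f_2(y)-f_2(c)}{y-c}$. (Strict convexity of $f_2$ would serve equally well.)

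With this established I would finish by comparing three affine graphs. Suppose $z<c$; the case $z>c$ is symmetric, with $f_2$ replacing $f_1$. Let $M$ be the chord joining $(x,f_1(x))$ and $(c,f_1(c))$. Since $M$ and $L$ are affine with $M(x)=L(x)$ and $M(c)=f(c)<L(c)$, we get $M(z)<L(z)$. Strict convexity of $f_1$ on $[a_1,c]$, applied to the endpoints $x$ and $c$, gives $f_1(z)<M(z)$. Therefore $f(z)=f_1(z)<M(z)<L(z)=tf(x)+(1-t)f(y)$, which is exactly the required strict chord inequality, and the proof is complete. The main obstacle is precisely the strictness in the slope comparison at the corner $c$: the hypothesis $f_1'(c_-)\le f_2'(c_+)$ by itself yields only convexity of $f$, and one must spend the strict convexity of one of the two pieces to push one secant slope strictly past the corresponding one-sided derivative.
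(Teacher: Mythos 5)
Your proof is correct and follows essentially the same route as the paper: both arguments hinge on comparing the secant slopes on either side of $c$ with the one-sided derivatives $f_1'(c_-)\le f_2'(c_+)$, using strict convexity of the pieces to make the comparison strict, and then finishing with an affine (chord) comparison to place the graph of $f$ strictly below the chord. The only difference is packaging -- the paper phrases the final step via tangent lines at $c$ and the piecewise-linear function $\max\{\ell_1,\ell_2\}$, while you show directly that the corner $(c,f(c))$ lies strictly below the chord and then compare $f_1$ with its own chord over $[x,c]$ -- which amounts to the same argument.
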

\begin{proof}
Take $b_1\in [a_1,c)$,  $b_2\in (c, a_2]$, and show that
the line segment $[b_1, b_2]$ lies above the graph of $f$.
Let $\l_i$ be the line through $f(b_i), f(c)$, and 
$L_i$ be the tangent line to the graph of $f_i$ at $c$.
Since $f_1$ is strictly convex,
$\l_1> f_1> L_1$ on $[a_1,c)$ so
the slope of $\l_1$ is less than 
the slope of $L_1$, which equals to $f_1^\prime(c_-)$.
Similarly, strict convexity of $f_2$ implies that $\l_2> f_2> L_2$
on $(c, a_2]$, so the slope of $\l_2$ is
greater than the slope of $L_2$ 
which equals to $f_2^\prime(c_+)$.
Since $f_1^\prime(c_-)\le f_2^\prime(c_+)$, 
the slope of $\l_1$ is less than the slope
of $\l_2$, and hence the function $\l=\max\{\l_1, \l_2\}$ 
is strictly convex. Hence $[b_1, b_2]$
lies above the graph of $\l$ but strict convexity of $f_1, f_2$
implies that $f\le\l$, so $[b_1, b_2]$ lies above the graph of $f$.
\end{proof}

\begin{lem}\label{lem: 2nd der is >0}
If $f$ is strictly convex, 
then $f_{\theta_\delta\theta_\tau}^{\prime\prime}>0$. 
\end{lem}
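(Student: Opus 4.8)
The plan is to exploit the fact that we mollify \emph{twice}: a single convolution $f_{\theta_\delta}$ of a strictly convex function is again strictly convex and $C^\infty$, but its second derivative may vanish at isolated points (as for $x\mapsto x^{4}$ at the origin), so one mollification does not suffice; the second convolution averages the nonnegative continuous function $f_{\theta_\delta}^{\prime\prime}$ against a strictly positive bump, which spreads the positivity around and forbids vanishing. Throughout, $\theta_\delta$ and $\theta_\tau$ are mollifiers of the kind used in the proof of Lemma~\ref{lem: smoothing conv}, namely nonnegative, smooth, compactly supported with unit integral, and strictly positive on a nonempty open interval.

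First I would check that $f_{\theta_\delta}$ is strictly convex and $C^\infty$. For $x_{0}\neq x_{1}$ and $t\in(0,1)$ one writes $f_{\theta_\delta}(tx_{0}+(1-t)x_{1})=\int_{\mathbb R} f\bigl(t(x_{0}-y)+(1-t)(x_{1}-y)\bigr)\,\theta_\delta(y)\,dy$; applying strict convexity of $f$ to the integrand for each fixed $y$ yields a pointwise strict inequality against $t f(x_{0}-y)+(1-t)f(x_{1}-y)$, and since $\theta_\delta$ is nonnegative and positive on a nonempty open interval, integrating preserves the strictness. Smoothness of $f_{\theta_\delta}$ is the standard property of convolution with a smooth compactly supported function recalled in the proof of Lemma~\ref{lem: smoothing conv}.

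Next set $g:=f_{\theta_\delta}^{\prime\prime}$, which is continuous (indeed $C^\infty$). Convexity of $f_{\theta_\delta}$ gives $g\geq 0$ everywhere, and strict convexity forbids $g$ from vanishing identically on any open interval, since otherwise $f_{\theta_\delta}$ would be affine there. Because differentiation commutes with convolution for the $C^{2}$ function $f_{\theta_\delta}$, we have $f_{\theta_\delta\theta_\tau}^{\prime\prime}(x)=g_{\theta_\tau}(x)=\int_{\mathbb R} g(x-y)\,\theta_\tau(y)\,dy$. Now fix $x$: the set $U$ on which $\theta_\tau$ is positive is a nonempty open interval, so $x-U:=\{x-y:y\in U\}$ is an open interval, and by the preceding remark there is $x_{0}\in x-U$ with $g(x_{0})>0$. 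Then $y\mapsto g(x-y)\,\theta_\tau(y)$ is nonnegative, continuous, and strictly positive at $y=x-x_{0}$, hence on a neighborhood thereof, so its integral is strictly positive; thus $f_{\theta_\delta\theta_\tau}^{\prime\prime}(x)>0$. Since $x$ was arbitrary, this proves the lemma.

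The only substantive point is the observation in the last two paragraphs that strict convexity survives a mollification and then prevents $g=f_{\theta_\delta}^{\prime\prime}$ from being zero on any interval; everything else is a routine positivity-of-an-integral argument, and the essential use of the double (rather than single) convolution is exactly what upgrades $f_{\theta_\delta\theta_\tau}^{\prime\prime}\geq 0$ to $f_{\theta_\delta\theta_\tau}^{\prime\prime}>0$.
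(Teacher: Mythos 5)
Your proof is correct and takes essentially the same route as the paper: both rest on the facts that mollification preserves strict convexity, that $f_{\theta_\delta\theta_\tau}^{\prime\prime}$ is the convolution of the nonnegative functions $f_{\theta_\delta}^{\prime\prime}$ and $\theta_\tau$, and that vanishing of this convolution at a point would force $f_{\theta_\delta}^{\prime\prime}\equiv 0$ on an open interval, making $f_{\theta_\delta}$ affine there and contradicting its strict convexity --- you simply run this contrapositive directly and also spell out the preservation of strict convexity that the paper merely asserts. The only cosmetic slip is that the set where $\theta_\tau>0$ need not be an interval, but it contains a nonempty open one, which is all your argument uses.
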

\begin{proof}
Convolution with any nonnegative function
preserves strict convexity, so $f_{\theta_\delta}$, 
$f_{\theta_\delta\theta_\tau}$ are strictly convex.
Differentiating under the integral sign, we get that
$f_{\theta_\delta\theta_\tau}^{\prime\prime}$  
is the convolution of nonnegative smooth functions 
$f^{\prime\prime}_{\theta_\delta}$
and $\theta_\tau$. 
So if $f_{\theta_\delta\theta_\delta}^{\prime\prime}(x)=0$,
then $f^{\prime\prime}_{\theta_\delta}(x-y)$ must vanish
wherever $\theta_\delta(y)$ is nonzero, so 
$f^{\prime\prime}_{\theta_\delta}=0$ on a neighborhood of $x$.
It follows that $f_{\theta_\delta}$ is affine near $x$, 
which contradicts the strict convexity of $f_{\theta_\delta}$.
\end{proof}

The following modification of Lemma~\ref{lem: smoothing conv}
is useful.

\begin{prop} \label{prop: bend}
Given real numbers $k, a_1, c, a_2$ with $a_1<c<a_2$, 
let $f_1\co [a_1, c]\to\mathbb R$  and 
$f_2\co [c, a_2]\to\mathbb R$
be $C^2$ functions satisfying $f_i^{\prime\prime}\ge k$,
$f_1(c)=f_2(c)$
and $f_1^\prime (c)< f_2^\prime (c)$.
If $f\co [a_1, a_2]\to\mathbb R$ denotes the (continuous)
function satisfying $f=f_1$ on $[a_1, c]$ and  $f=f_2$ on $[c, a_2]$, 
then for any small $\delta>0$ there exists a $C^2$ function 
$f_\delta\co [a_1, a_2]\to\mathbb R$ such that \newline
\textup{(1)} $f_\delta^{\prime\prime}>k$\newline
\textup{(2)} $f_\delta=f$ and $f_\delta^\prime=f^{\prime}$ 
 at the points $a_1, a_2$, \newline 
\textup{(3)} if $f$ is increasing, then $f^\prime_\delta>0$ \newline 
\textup{(4)} If $f$ is $C^l$ on $[a_1,a_2]$ for some integer 
$l\in [0,\infty]$, then
$f_\delta$ is $C^l$ on $[a_1,a_2]$, and $f_\delta$
converges to $f$ in the $C^l$-topology on $[a_1,a_2]$ as $\delta\to 0$.
\end{prop}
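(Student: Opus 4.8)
The plan is to reduce Proposition~\ref{prop: bend} to Lemma~\ref{lem: smoothing conv} by replacing the ``corner'' between $f_1$ and $f_2$ with a genuinely $C^1$ join, and then invoking the smoothing lemma on the resulting function. The obstacle in applying Lemma~\ref{lem: smoothing conv} directly is that its hypothesis requires a continuous function with a one-sided-second-derivative corner but a \emph{matching} (or upward-jumping) first derivative at $c$; here instead it is the first derivative that jumps, namely $f_1^\prime(c)<f_2^\prime(c)$, while the values agree. So the first step is to insert, on a tiny interval around $c$, an interpolating arc that smoothly ramps the derivative from $f_1^\prime(c)$ up to $f_2^\prime(c)$ while keeping the second derivative $>k$.

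Concretely, first I would fix a small $\eta>0$ with $[c-\eta,c+\eta]\subset(a_1,a_2)$ and choose a $C^\infty$ nondecreasing function $\psi\co[c-\eta,c+\eta]\to\mathbb R$ with $\psi\equiv f_1^\prime$ near $c-\eta$, $\psi\equiv f_2^\prime$ near $c+\eta$, and $\psi^\prime>k$ throughout; such a $\psi$ exists because $f_i^\prime$ already have derivative $\ge k$ near the endpoints and we only need to bridge the finite gap $f_2^\prime(c)-f_1^\prime(c)>0$ over a short interval, which can be done with slope as large as we like (hence $>k$) — this is where the strict inequality $f_1^\prime(c)<f_2^\prime(c)$ is essential, since it forces $\psi$ to be strictly increasing and lets us keep $\psi^\prime>k$ rather than merely $\ge k$. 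Then define $\tilde f$ to equal $f_1$ on $[a_1,c-\eta]$, equal $f_2$ on $[c+\eta,a_2]$, and on $[c-\eta,c+\eta]$ equal $f_1(c-\eta)+\int_{c-\eta}^{x}\psi$; after adjusting the additive constant on $f_2$'s piece (or, more cleanly, noting the integral of $\psi$ over $[c-\eta,c+\eta]$ can be arranged to land exactly at $f_2(c+\eta)$ by a further small modification of $\psi$ in its interior that does not disturb $\psi^\prime>k$ — a one-parameter shooting argument), $\tilde f$ is a well-defined $C^1$ function on $[a_1,a_2]$, agreeing with $f$ and $f^\prime$ near $a_1,a_2$, with $\tilde f^{\prime\prime}\ge k$ on each of the three pieces and with first derivative at worst equal on the two interior junctions.

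Next I would apply Lemma~\ref{lem: smoothing conv} at each of the (at most two) points $c\pm\eta$ where $\tilde f$ is only $C^1$: at each such point the left and right one-sided second derivatives are both $>k$ (strict, since $f_i^{\prime\prime}\ge k$ can be upgraded: either $f_i^{\prime\prime}>k$, or if equality is allowed one first perturbs $f_i$ by an arbitrarily small $C^2$ amount to make it strict, at the cost of weakening only the approximation, not conclusions (1)--(3)), and the first derivatives match there, so the hypothesis $f^\prime(c_-)\le f^\prime(c_+)$ of Lemma~\ref{lem: smoothing conv} holds. The lemma then produces $f_\delta:=(\tilde f)_{\delta,\s}$ which is $C^2$ (indeed $C^\infty$ or $C^l$ as appropriate, by part~(2) of the lemma), satisfies $f_\delta^{\prime\prime}>k$ on $[a_1,a_2]$ (part~(1)), has $f_\delta^\prime>0$ when $f$ is increasing and $\delta$ is small (part~(3)), coincides with $\tilde f$ — hence with $f$ — outside small neighborhoods of $c\pm\eta$, in particular near $a_1,a_2$ where it matches $f$ and $f^\prime$, and converges to $\tilde f$ in $C^l$ as $\delta\to0$; since $\tilde f$ itself converges to $f$ in $C^l$ as $\eta\to 0$ (if $f$ is $C^l$), a diagonal choice $\eta=\eta(\delta)\to 0$ gives conclusion~(4).

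The main obstacle I anticipate is bookkeeping around the two interior junction points and the constant-matching: one must choose $\psi$ so that the interpolating arc simultaneously exits with the correct \emph{value} $f_2(c+\eta)$ and the correct slope $f_2^\prime(c+\eta)$ while maintaining $\psi^\prime>k$; this is a two-constraint problem on a free function over a short interval, easily solvable because the constraints are affine in $\psi$ and the slope budget is unbounded, but it requires a clean statement (e.g. ``given the gap and the interval length, a suitable $\psi$ exists'') to avoid a messy explicit formula. Everything after that is a direct appeal to Lemma~\ref{lem: smoothing conv}, applied once at each of the at most two corners of the $C^1$ function $\tilde f$.
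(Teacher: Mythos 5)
Your motivating obstacle is illusory: Lemma~\ref{lem: smoothing conv} is stated for exactly the configuration at hand, namely a continuous function that is $C^2$ on either side of $c$ with $f''>k$ and whose one-sided first derivatives satisfy $f'(c_-)\le f'(c_+)$. An upward jump of the derivative (a convex corner) is precisely what the convolution smoothing is designed to handle, so no interpolating ramp is needed, and the paper indeed applies the lemma directly at the single corner $c$. The only genuine mismatch with the lemma's hypotheses is the non-strict bound $f_i''\ge k$ (the lemma needs $f''>k$ on each side, and conclusion (1) asks for a strict bound on all of $[a_1,a_2]$). The paper repairs this by replacing $f_1,f_2$ with $F_{1,\delta}=f_1+\delta(r-a_1)^2$ and $F_{2,\delta}=f_2+\delta(r-a_2)^2(c-a_1)^2/(c-a_2)^2$: these have second derivative $>k$, their corrections vanish to first order at $a_1$ and $a_2$ (so conclusion (2) survives), they still agree at $c$, and the derivative jump at $c$ remains strict for small $\delta$; then Lemma~\ref{lem: smoothing conv} is applied in a small neighborhood of $c$ only.

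Your ramp-plus-shooting route can be made to work, but it is longer and has two soft spots. First, the requirement $\psi'>k$ throughout the ramp conflicts with $\psi\equiv f_i'$ near the ramp ends whenever $f_i''$ attains the value $k$ there, so your parenthetical upgrade of $f_i''\ge k$ to $f_i''>k$ is not optional; moreover it is needed on all of $[a_1,c-\eta]\cup[c+\eta,a_2]$, since off the ramp $f_\delta$ coincides with the (possibly perturbed) $f_i$ and conclusion (1) demands strictness there too. Second, and this is the real gap in the write-up: you assert that this small perturbation of $f_i$ leaves conclusions (1)--(3) intact, but conclusion (2) requires exact equality of $f_\delta,f_\delta'$ with $f,f'$ at $a_1,a_2$, which a generic small $C^2$ perturbation destroys; you must choose the perturbation to vanish to first order at $a_1$ and $a_2$ (e.g.\ the quadratics above), which is exactly the paper's device, and this should be said. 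Finally, the solvability of your shooting problem deserves a line of proof: the target $\int_{c-\eta}^{c+\eta}\psi=f_2(c+\eta)-f_1(c-\eta)$ lies strictly between the extremal values compatible with the endpoint conditions and $\psi'>k$, precisely because $f_i''\ge k$ and $f_1'(c)<f_2'(c)$ is strict. With these repairs your argument goes through, but it reproves by hand what the direct application of Lemma~\ref{lem: smoothing conv} at $c$ already gives; note also that conclusion (4) is non-vacuous only for $l=0$, since $f_1'(c)<f_2'(c)$ prevents $f$ from being $C^1$ on $[a_1,a_2]$.
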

\begin{proof}
Consider the functions
\[
F_{1,\delta}(r)=f_1(r)+\delta (r-a_1)^2,\ \ \ \
F_{2,\delta}(r)=f_2(r)+\delta (r-a_2)^2\frac{(c-a_1)^2}{(c-a_2)^2}
\]
defined on domains of $f_1, f_2$, respectively. 
For each $i$ the function $F_{i,\delta}(r)$ converges to $f_i$ 
in uniform $C^1$ topology on the domain of $f_i$, as $\delta\to 0$,
and furthermore,
$F^{\prime\prime}_{i,\delta}>k$ for small $\delta$,
and $F_{i,\delta}-f_i$ and $F_{i,\delta}^\prime-f_i^\prime$ vanish
at $a_i$. Also $F_{1,\delta}(c)=F_{2,\delta}(c)$,
and $F_{1,\delta}^\prime (c)< F_{2,\delta}^\prime (c)$ for 
small $\delta$.
Let $F_\delta$ be the (continuous) function satisfying 
$F_\delta=F_{i,\delta}$ on the domain of $f_i$.
Applying Lemma~\ref{lem: smoothing conv} to smooth $F_\delta$
near $c$, we get a function $f_\delta$ with required properties.
\end{proof}

\section{Curvature of warped product metrics}
\label{sec: components-of-curv-tensor}

In this appendix we review 
some formulas for the curvature tensor of a  
multiply-warped product metric $dr^2+g_r$ on $I\times F$ that were 
worked out in~\cite[Section 6]{BW}, and corrected in~\cite{Bel-rh-warp}. 

The computation in~\cite[Section 6]{BW}) works 
provided at each point $w$ of $F$ there is a basis of vector fields $\{X_i\}$
on a neighborhood $U_w\subset F$  that is $g_r$-orthogonal
for each $r$. We fix one such a basis for each $w$.
Let $h_i(r)=\sqrt{g_r(X_i,X_i)}$ so that $Y_i=X_i/h_i$
form a $g_r$-orthonormal basis on $U_w$ for any $r>0$.
Since $X_i\neq 0$ and $g_r$ is nondegenerate, $h_i>0$ 

To simplify some of the formulas below 
we denote $g(X,Y)$ by $\langle X , Y\rangle$, denote the vector field
$\frac{\d}{\d r}$ by $\ddr$,  and reserve the notation $\frac{\d}{\d r}T$ 
for the partial derivative of the function $T$ with respect to $r$.

A straightforward tedious computation
(done e.g. in~\cite[Section 6]{BW}) yields the following.
\begin{eqnarray}\label{form: curv of warped prod}
& \langle R_g (Y_i,Y_j) Y_j,Y_i\rangle=
\langle R_{g_r}(Y_i,Y_j) Y_j,Y_i\rangle -
\frac{h_i^\prime h_j^\prime}{h_ih_j},\\
& \langle R_g(Y_i,Y_j) Y_l,Y_m\rangle= 
\langle R_{g_r}(Y_i,Y_j) Y_l,Y_m\rangle
\ \ \ \mathrm{if}\ \{i,j\}\neq \{l,m\},\\
  & \langle R_g(Y_i,\ddr)\ddr ), Y_i \rangle=
-\frac{h_i^{\prime\prime}}{h_i},\ \ \ \ \ \langle
R_g(Y_i,\ddr)\ddr ), Y_j \rangle=0\ \ \ \mathrm{if}\ i\neq j.
\smallskip
\end{eqnarray}

The following mixed term is by far the most 
complicated and is usually the hardest to control:
by~\cite[Appendix C]{Bel-rh-warp} 
$2\langle R_g(\ddr ,Y_i) Y_j,Y_k\rangle$ equals to
\begin{eqnarray*}
\langle [Y_i,Y_j],Y_k\rangle
\left(\ln\frac{h_k}{h_j}\right)^\prime
+\langle [Y_k,Y_i],Y_j\rangle 
\left(\ln\frac{h_j}{h_k}\right)^\prime
+\langle [Y_k,Y_j],Y_i\rangle
\left(\ln\frac{h_i^2}{h_jh_k}\right)^\prime .
\end{eqnarray*}

\section{Acknowledgments}
It is a pleasure to thank Bill Goldman and Vitali Kapovitch 
for discussions relevant to this work, and the referee 
for valuable expository suggestions.
This work was partially supported by the NSF grant \# DMS-0503864.

\small
\bibliographystyle{amsalpha}
\bibliography{ch-warp-revised}
\end{document}